%BeginFileInfo
%%Publisher=
%%Project=
%%Manuscript=imsart
%%Stage=
%%TID=Vytas
%%Format=latex006
%%Distribution=live4
%%Destination=PS
%%DVI.Maker=vtex_tex_dvi
%%PS.Maker=vtex_tex_ps
%EndFileInfo
%\documentclass{amsart}
\documentclass[noinfoline]{article}

\RequirePackage[OT1]{fontenc}
\RequirePackage[%lnms,
amsthm,amsmath%,natbib
]{imsart}

\usepackage{graphicx}
\usepackage{latexsym,amsmath}
\usepackage{amsmath,amsthm,amscd}
\usepackage{amsfonts}
\usepackage[psamsfonts]{amssymb}
\usepackage{enumerate}

\usepackage{url}

\usepackage{tocvsec2}

\usepackage{epstopdf}
\usepackage{wrapfig}
\usepackage{float}

\usepackage{hyperref}

% settings
%\pubyear{2005}
%\volume{0}
%\issue{0}
%\firstpage{1}
%\lastpage{8}

\startlocaldefs
\numberwithin{equation}{section}

\theoremstyle{plain} 
\newtheorem{theorem}{Theorem}[section]
\newtheorem{corollary}[theorem]{Corollary}

\newtheorem{lemma}[theorem]{Lemma}

\newtheorem{proposition}[theorem]{Proposition}

\theoremstyle{definition} 

\theoremstyle{definition} 

\newtheorem*{ex*}{Example}
\theoremstyle{remark} 

\theoremstyle{remark} 
\newtheorem{remark}[theorem]{Remark}
\newtheorem*{remark*}{Remark}
\numberwithin{equation}{section}
 
\newcommand{\beqa}{\begin{eqnarray}}
\newcommand{\eeqa}{\end{eqnarray}}
 
\newcommand{\bseq}{\begin{subequations}}
 
\newcommand{\eseq}{\end{subequations}}

\newcommand{\dd}{\partial}

\newcommand{\alt}{{\,\operatorname{alt}}}
\newcommand{\eff}{{\,\operatorname{eff}}}

\renewcommand{\dd}{{\,\operatorname{d}}}

\newcommand{\sign}{\operatorname{sign}}
\newcommand{\supp}{\operatorname{supp}}

\newcommand{\vBE}{{\operatorname{vBE}}}

\newcommand{\Ga}{\Gamma}
\newcommand{\si}{\sigma}

\newcommand{\ka}{\kappa}
\newcommand{\la}{\lambda}
\newcommand{\ga}{\gamma}
\newcommand{\de}{\delta}
\newcommand{\be}{\beta}

\newcommand{\ii}[1]{\,\mathbf{I}\{#1\}} 

\newcommand{\pd}[2]{\frac{\partial#1}{\partial#2}} 
\newcommand{\fd}[2]{\frac{\dd#1}{\dd#2}} 

\newcommand{\intr}[2]{\overline{#1,#2}}

\renewcommand{\P}{\operatorname{\mathsf{P}}} 
\newcommand{\E}{\operatorname{\mathsf{E}}}

\newcommand{\R}{\mathbb{R}}
\newcommand{\C}{\mathcal{C}}
\newcommand{\F}{\mathcal{F}}

\newcommand{\G}{\mathcal{G}}

\newcommand{\ta}{{\tilde{a}}}

\newcommand{\tl}{{\tilde{\ell}}}
\newcommand{\tp}{{\tilde{p}}}

\newcommand{\tka}{{\tilde{\kappa}}}
\newcommand{\trho}{{\tilde{\rho}}}

\newcommand{\tR}{{\tilde{R}}}

\newcommand{\tU}{{\tilde{U}}}

\newcommand{\tC}{{\tilde{C}}}

\renewcommand{\le}{\leqslant}
\renewcommand{\ge}{\geqslant}

 \pagenumbering{arabic}

\newcommand{\X}{{\mathfrak{X}}}
\newcommand{\W}{{\mathfrak{P}}}

\endlocaldefs

\begin{document}

\begin{frontmatter}

\title{Best possible bounds \\ of 
the von Bahr--Esseen type
%On the von Bahr--Esseen inequality
}
\runtitle{Best bounds of the von Bahr--Esseen type
%von Bahr--Esseen inequality
}
%\date{\today}

% \author{\fnms{First}  \snm{Author}\corref{}\thanksref{t2}\ead[label=e1]{first@somewhere.com}},
%  \author{\fnms{Second} \snm{Author}\ead[label=e2]{second@somewhere.com}}
%  \and
%  \author{\fnms{Third}  \snm{Author}%
%  \ead[label=e3]{third@somewhere.com}%
%  \ead[label=u1,url]{http://www.foo.com}}
%
%  \thankstext{t2}{Footnote to the first author with the `thankstext' command.}

\begin{aug}
\author{\fnms{Iosif} \snm{Pinelis}%\thanksref{t2}\ead[label=e1]{ipinelis@mtu.edu}}
  %\thankstext{t2}{Supported by NSF grant DMS-0805946
  }
\runauthor{Iosif Pinelis}

%\affiliation{Michigan Technological University}

\address{Department of Mathematical Sciences\\
Michigan Technological University\\
Houghton, Michigan 49931, USA\\
E-mail: \printead[ipinelis@mtu.edu]{e1}}
\end{aug}

\begin{abstract}
The well-known von Bahr--Esseen bound on the absolute $p$th moments of martingales with $p\in(1,2]$ is extended to a large class of moment functions, and now with a best possible constant factor (which depends on the moment function). 
This result appears to be new even for the power moments. 
As an application, measure concentration inequalities for separately Lipschitz functions on product spaces are obtained. 
Relations with $p$-uniformly smooth and $q$-uniformly convex normed spaces are discussed. 
\end{abstract}

%\subjclass[2000]{60E15, 62G10, 62G15, 60G50, 62G35}
% 62G10    	Hypothesis testing
%  62G15    	Tolerance and confidence regions
%  60G50    	Sums of independent random variables; random walks
%   62G35    	Robustness
  
%
%\keywords{probability inequalities; Rade\-macher random variables; sums of independent random variables; Student's test; self-normalized sums}

\begin{keyword}[class=AMS]
\kwd[Primary ]{60E15}
\kwd{60B11}
\kwd{62G10}
\kwd[; secondary ]{46B09}
%\kwd{}
\kwd{46B20}
\kwd{46B10}
%\kwd{62G35}
%\kwd{60G51}
\end{keyword}

% 60B11    	Probability theory on linear topological spaces
% 46B09    	Probabilistic methods in Banach space theory [See also 60Bxx]
% 46B20    	Geometry and structure of normed linear spaces
% 46B10    	Duality and reflexivity [See also 46A25]

\begin{keyword}
\kwd{probability inequalities}
\kwd{sums of independent random variables}
\kwd{martingales}
\kwd{v-martingales}
\kwd{concentration of measure}
\kwd{separately Lipschitz functions}
\kwd{product spaces}
\kwd{$p$-uniformly smooth normed spaces}
\kwd{$q$-uniformly convex normed spaces}
%\kwd{Student's test}
%\kwd{self-normalized sums}
%\kwd{Esscher-Cram\'er tilt transform}
%%\kwd{generalized moments}
%%\kwd{L\'evy processes}
\end{keyword}

\end{frontmatter}

\settocdepth{chapter}

\tableofcontents 
%%%%%%%%%%%%%%%%%{\small\tableofcontents} 

\settocdepth{subsubsection}

\theoremstyle{plain} 
\numberwithin{equation}{section}

%\eject

\section{Summary and discussion}\label{intro} 

\subsection{Summary}\label{summary} 

%In this subsection, we shall define a class of random variables (r.v.'s) and a class of generalized moment functions, for which a sharp probability inequality will be stated. This inequality is the main result of the paper. 
%Originally, extension to a wider class of moment functions was not an objective of this study. Rather, the aim (suggested by statistical applications considered in \cite{nonlinear}) was to obtain an optimal version of the von Bahr-Esseen (vBE) inequality \cite{bahr65} for the (absolute) power moments. However, it turned out that such an extension to general moment functions provided the only apparently available way to prove the best possible bound for the power moments. 

Given any sequence $(S_j)_{j=1}^n$ of (real-valued) r.v.'s, let $X_j:=S_j-S_{j-1}$ denote the corresponding differences, for $j\in\intr1n$, with the convention $S_0:=0$, so that $X_1=S_1$; here and in what follows, for any $m$ and $n$ in the set $\{0,1,\dots,\infty\}$ we let $\intr mn$ stand for the set of all integers $i$ such that $m\le i\le n$. 

If $\E|X_j|<\infty$ and $\E(X_j|S_{j-1})=0$ for all $j\in\intr2n$, let us say that the sequence $(S_j)_{j=1}^n$ is a \emph{v-martingale} (where ``v'' stands for ``virtual''); in such a case, let us also say that $(X_j)_{j=1}^n$ is a \emph{v-martingale difference sequence}, or simply that the $X_j$'s are v-martingale differences. 
Note that, for a general v-martingale difference sequence $(X_j)_{j=1}^n$, $X_1$ may be any r.v.\ whatsoever; in particular, its mean (if it exists) may or may not be $0$. 
It is clear that any martingale $(S_j)_{j=1}^n$ is a v-martingale.  
Quite similarly one can define v-martingales with values in a normed space. 

Introduce the following class of generalized moment functions: 
\begin{alignat}{2}
	\F_{1,2}&:=\big\{f\in C^1(\R)\colon & & f(0)=0, \text{ $f$ is even,} \notag\\ 
										&	& &	\text{$f'$ is nondecreasing and concave on $[0,\infty)$}\big\} \notag\\
	&=\big\{f\in C^1(\R)\colon & & f(0)=0, \text{ $f$ is even,} \notag\\ 
										&	&	& \text{$f''$ is nonnegative and nonincreasing on $(0,\infty)$}\big\}; \label{eq:f''}
\end{alignat}
here, as usual, $C^1(\R)$ is the class of all continuously differentiable real-valued functions on $\R$, and then $f''$ denotes the right derivative on $(0,\infty)$ of %the continuous concave function 
$f'$; on $(-\infty,0)$, $f''$ will denote the left derivative of $f'$. 
It is clear that each function $f\in\F_{1,2}$ is convex and hence nonnegative. Also,  
for each function $f\in\F_{1,2}$ one has $f'(0)=0$. It follows that $f'>0$ on $(0,\infty)$ and hence $f>0$ on $\R\setminus\{0\}$
for any function $f\in\F_{1,2}\setminus\{0\}$.  

\begin{theorem}\label{th:} \ 
\begin{enumerate}[(I)]
	\item 
For any $f\in\F_{1,2}\setminus\{0\}$, $n\in\intr2\infty$, and v-martingale $(S_j)_{j=1}^n$, 
\begin{equation}\label{eq:}
	\E f(S_n)\le\E f(X_1)+C\sum_{j=2}^n\E f(X_j) 
\end{equation}
with $C=C_f$, 
where 
\begin{gather}
	C_f:=\sup_{0<x<s<\infty}\frac{L_{f;s}(x)}{f(s)}, \label{eq:C_f}\\ 
	L_{f;s}(x):=f(x-s)-f(x)+sf'(x). \label{eq:L}   
\end{gather} 
\item 
The constant factor $C_f$ is the best possible in the sense that, 
for each $f\in\F_{1,2}\setminus\{0\}$ and each $n\in\intr2\infty$, the number $C_f$ is the 
smallest value of $C$ such that inequality \eqref{eq:} holds for all v-martingales $(S_j)_{j=1}^n$; in fact, $C_f$ is the best possible even if the differences $X_1,\dots,X_n$ are assumed to be any independent zero-mean r.v.'s.   
\item 
For each $f\in\F_{1,2}\setminus\{0\}$, 
\begin{equation}\label{eq:1<C_f<2}
	1\le C_f\le2. 
\end{equation} 
\item
For each $C\in[1,2]$ there is some $f\in\F_{1,2}\setminus\{0\}$ such that $C_f=C$; in particular, it follows that the bounds $1$ and $2$ on $C_f$ in \eqref{eq:1<C_f<2} are the best possible ones. 
\end{enumerate}
\end{theorem}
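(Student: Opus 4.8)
The plan is to prove (I) by reducing it, via induction on $n$ and conditioning, to a one-step inequality and then to a pointwise (``tangent-line'') majorization, and to obtain (II)--(IV) by working directly with the formula \eqref{eq:C_f}. For the reduction: since $(S_1,\dots,S_{n-1})$ is again a v-martingale, induction reduces \eqref{eq:} to the one-step bound $\E f(S_n)-\E f(S_{n-1})\le C_f\,\E f(X_n)$; substituting $y=X_n$, $s=S_{n-1}$ in the pointwise inequality below, taking $\E(\,\cdot\mid S_{n-1})$, and using $\E(X_n\mid S_{n-1})=0$ to kill the linear term, this one-step bound follows from
\[
 f(s+y)-f(s)-f'(s)\,y\ \le\ C_f\,f(y)\qquad(s,y\in\R);
\]
the case $n=\infty$ follows by letting $n\to\infty$.

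To prove the displayed pointwise inequality I would use: $C_f\ge1$ (let $x\downarrow0$ in \eqref{eq:C_f}, so $L_{f;s}(x)\to f(s)$); $f''$ is nonincreasing on $(0,\infty)$; and $f'$ is concave with $f'(0)=0$, whence $f'(t)\ge(t/s)f'(s)$ on $[0,s]$ and so $wf'(w)\le2f(w)$. By evenness one may take $s\ge0$, with $s=0$ trivial. For $s>0$ put $\phi(y):=f(s+y)-f(s)-f'(s)y-C_f f(y)$, so $\phi(0)=\phi'(0)=0$. For $y>0$, $\phi''(y)=f''(s+y)-C_f f''(y)\le(1-C_f)f''(y)\le0$, so $\phi\le0$ on $[0,\infty)$. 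For $y=-w<0$, evenness gives $\phi(-w)=L_{f;w}(s)-C_f f(w)$; if $0<s<w$ this is $\le0$ directly from \eqref{eq:C_f}, while if $0<w\le s$ it suffices to show $L_{f;w}(s)\le f(w)$ --- and indeed $s\mapsto L_{f;w}(s)$ is nonincreasing on $[w,\infty)$, since $\frac{d}{ds}L_{f;w}(s)=wf''(s)-\int_{s-w}^{s}f''\le0$, so $L_{f;w}(s)\le L_{f;w}(w)=wf'(w)-f(w)\le f(w)$. I expect this pointwise inequality, and within it the bound $L_{f;w}(s)\le f(w)$ for $0<w\le s$, to be the main obstacle.

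For (II), the extremal one-step configuration is ``$S$ nearly constant, $X$ nearly a point mass at $-s_0$''. Given $C<C_f$, choose $0<x_0<s_0$ with $L_{f;s_0}(x_0)/f(s_0)>C$, and for small $a>0$ let $X_2$ take the values $a,-s_0$ with probabilities $s_0/(a+s_0),\,a/(a+s_0)$, so $\E X_2=0$; since $f(a)=o(a)$ (as $f'(0)=0$), a short computation gives $\bigl(\E f(x_0+X_2)-f(x_0)\bigr)/\E f(X_2)\to L_{f;s_0}(x_0)/f(s_0)$ as $a\to0$. Taking $n=2$, $X_1\equiv x_0$ (permissible for v-martingales), and $X_3=\dots=X_n=0$ then violates \eqref{eq:} for $a$ small. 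For the sharper assertion with independent zero-mean differences, replace the constant $X_1$ by the zero-mean r.v.\ with values $x_0,-M$ and probabilities $M/(M+x_0),\,x_0/(M+x_0)$: the contribution of $\{X_1=-M\}$ to $\E f(S_2)-\E f(X_1)$ is nonnegative by convexity of $f$, so the relevant ratio is still at least $\bigl(1-\tfrac{x_0}{M+x_0}\bigr)\bigl(\E f(x_0+X_2)-f(x_0)\bigr)/\E f(X_2)$, which exceeds $C$ once $M$ is large and $a$ small.

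For (III): $C_f\ge1$ as above, and for $0<x<s$, $L_{f;s}(x)=f(s-x)-f(x)+sf'(x)\le f(s)+\bigl(sf'(x)-f(x)\bigr)\le2f(s)$, the first step by monotonicity of $f$ and the second because $sf'(x)-f(x)\le f(s)$ (it holds at $s=x$, being $xf'(x)\le2f(x)$, and its left side grows in $s$ at rate $f'(x)\le f'(s)$). For (IV): $f(x)=x^2$ gives $L_{f;s}(x)\equiv s^2$, so $C_f=1$; any nonzero $f\in\F_{1,2}$ with bounded $f'$, e.g.\ $f(x)=|x|-1+e^{-|x|}$, has $C_f=2$ (let $x,s\to\infty$ with $x/s\to0$ in $L_{f;s}(x)/f(s)$, which tends to $2$); and for $f_p(x)=|x|^p$, $p\in(1,2]$, $C_{f_p}=\max_{0\le t\le1}\bigl[(1-t)^p-t^p+pt^{p-1}\bigr]$ is continuous in $p$, equals $1$ at $p=2$, and tends to $2$ as $p\downarrow1$, so by the intermediate value theorem $\{C_{f_p}:p\in(1,2]\}\supseteq[1,2)$; together these realize every $C\in[1,2]$.
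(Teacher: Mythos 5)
Your proposal is correct, and for part~(I) it takes a genuinely different --- and dramatically simpler --- route than the paper. The paper reduces to two-point distributions for $Y$, invokes the integral representation $f=\int\psi_t\,\ga(\dd t)$ from Proposition~\ref{prop:F12}, and reduces the key comparison (Lemma~\ref{lem:J<G}) to a $432$-case computer-assisted verification of piecewise-polynomial inequalities. You instead establish the \emph{pointwise} tangent-line bound $f(s+y)-f(s)-f'(s)y\le C_f\,f(y)$ for all $s,y\in\R$ by elementary means: for $y>0$ the difference $\phi(y):=f(s+y)-f(s)-f'(s)y-C_f f(y)$ has $\phi(0)=\phi'(0+)=0$ and $\phi''\le(1-C_f)f''\le0$ (using $f''$ nonincreasing and $C_f\ge1$); for $y=-w$ with $w>s$ the bound is literally the definition of $C_f$; and for $y=-w$ with $0<w\le s$ you show $L_{f;w}(\cdot)$ is nonincreasing on $[w,\infty)$ and $L_{f;w}(w)=wf'(w)-f(w)\le f(w)$ via $wf'(w)\le2f(w)$ (concavity of $f'$ with $f'(0)=0$). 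Taking $\E(\cdot\mid S_{n-1})$ and using $\E(X_n\mid S_{n-1})=0$ kills the linear term and gives the one-step bound, which closes the induction. This bypasses both the extreme-ray decomposition of $\F_{1,2}$ and the entire Mathematica analysis; what the paper's longer argument buys is the stronger two-point monotonicity statement $J_{f;c,s}(x)\le L_{f;s}(x)/f(s)$ of Lemma~\ref{lem:J<G}, which is not needed for Theorem~\ref{th:} itself. Your treatments of (II) (push mass to $-\infty$ in $X_1$, degenerate $X_2$ towards $\{0,-s_0\}$) and (III) ($sf'(x)-f(x)\le f(s)$ via monotonicity in $s$) are essentially in the same spirit as the paper's, though (III) again avoids the representation $f=\int\psi_t\,\ga(\dd t)$. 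The one place you wave a hand is in (IV): you assert continuity of $p\mapsto\tC_p=\max_{t\in[0,1]}\big[(1-t)^p-t^p+pt^{p-1}\big]$ without proof (the paper derives it from the implicit function theorem in Proposition~\ref{prop:C_powers}(iii)); this is true and not hard --- on any $[1+\de,2]$ the integrand is jointly continuous, so the max is continuous --- but it should be said.
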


Since all functions $f$ in $\F_{1,2}$ are nonnegative, the expressions on both sides of inequality \eqref{eq:} are well defined. 
At that, it is possible for the right-hand side, or for both sides, of \eqref{eq:} to equal $\infty$. 
In the case when the differences $X_1,\dots,X_n$ are independent zero-mean r.v.'s, if the left-hand side of \eqref{eq:} is finite then (by Jensen's inequality) $\E f(X_j)<\infty$ for each $j\in\intr1n$, so that 
the right-hand side is finite as well;   
thus, for independent zero-mean $X_1,\dots,X_n$, 
the two sides of \eqref{eq:} are either both finite or both infinite.  

%A serious obstacle to overcome in order to obtain Theorem~\ref{th:} was to understand the form of the inequality to prove, including choosing the ``right'' class of moment functions 
%and, especially, developing the conjecture on   
%what the optimal constant factor $C_f$ in the inequality can possibly be. 

\subsection{Discussion}\label{discussion} 

In this subsection, we shall 
\begin{enumerate}
	\item describe the structure of the class $\F_{1,2}$ as a convex cone, which will be useful in most of the proofs, and 
%	\item 
provide examples of functions in the class $\F_{1,2}$, including the (absolute) power functions and ``extreme'' functions (that is, functions belonging to the extreme rays of the convex cone $\F_{1,2}$); 
	\item present a general approach to effective calculation of the best possible constant $C_f$, with further information on this constant for the power functions and ``extreme'' functions; 
	\item give an application to the concentration of measure for separately Lipschitz functions on product spaces; 
	\item state other corollaries of the main theorem and relate the results with the relevant ones in the literature, by von Bahr and Esseen (vBE) and other authors. 
\end{enumerate} 
Each of these items will be presented in a separate subsubsection. 

\subsubsection{Structure of the class $\F_{1,2}$ and examples of functions in this class}\label{F_12} 

The following proposition describes the convex-cone structure of the class $\F_{1,2}$. 
%; similar descriptions of similar classes of functions can be found e.g.\ in \cite{eaton2,T2,zinn_etal,spher,asymm,pin-hoeff}. 

\begin{proposition}\label{prop:F12} \ 
\begin{enumerate}[(I)]
	\item 
A function $f\colon\R\to\R$ belongs to the class $\F_{1,2}$ if and only if there exists a (nonnegative, possibly infinite) Borel measure $\ga=\ga_f$ on $(0,\infty]$ such that $\int_{(0,\infty]}(t\wedge1)\ga(\dd t)<\infty$ and 
\begin{equation}\label{eq:f}
	f(x)=\int_{(0,\infty]}\psi_t(x)\ga(\dd t)
\end{equation}
for all $x\in\R$, where 
\begin{equation*}
	\psi_t(x):=x^2-(|x|-t)_+^2,
\end{equation*}
assuming the conventions $u_+:=0\vee u$, $u_+^p:=(u_+)^p$, $u-\infty:=-\infty$, and $(-\infty)_+:=0$, for all real $u$, so that $\psi_\infty(x)=x^2$ for all $x\in\R$. 
Also, 
\begin{equation}\label{eq:psi_0+}
\tfrac1{2t}\,\psi_t(x)\underset{t\downarrow0}\longrightarrow|x| 	
\end{equation}
uniformly in $x\in\R$. 
\item 
For each $f\in\F_{1,2}$, the corresponding measure $\ga=\ga_f$ is unique and determined by the condition that 
\begin{equation}\label{eq:ga}
\ga\big((x,\infty]\big)=\tfrac12\,f''(x) 	
\end{equation}
for all $x\in(0,\infty)$. 
%\item 
%For each function $f\in\F_{1,2}$, one has $f'(0)=0$.  
\item 
For any $f\in\F_{1,2}$ and $x\in[0,\infty)$, 
\begin{equation}\label{eq:f'}
	f'(x)=\int_{(0,\infty]}\psi'_t(x)\ga(\dd t)=2\int_{(0,\infty]}(x\wedge t)\ga(\dd t). 
\end{equation}
%\item 
%For each function $f\in\F_{1,2}\setminus\{0\}$, one has $f'>0$ and $f>0$ on $\R\setminus\{0\}$.  
\end{enumerate}
\end{proposition}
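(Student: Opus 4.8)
The plan is to base everything on the second description of $\F_{1,2}$ in \eqref{eq:f''} — that $f\in\F_{1,2}$ iff $f\in C^1(\R)$ is even, $f(0)=0$, and $f''$ (the right derivative of $f'$ on $(0,\infty)$) is nonnegative and nonincreasing — and to exploit the $\psi_t$ as elementary building blocks. First I would record the explicit form of $\psi_t$: for $t\in(0,\infty)$ one has $\psi_t(x)=x^2$ when $|x|\le t$ and $\psi_t(x)=2t|x|-t^2$ when $|x|\ge t$, so that $\psi_t\in\F_{1,2}$, with $\psi_t'(x)=2(x\wedge t)$ and $\psi_t''(x)=2\,\ii{x<t}$ for $x>0$ (the case $t=\infty$ being $\psi_\infty(x)=x^2$). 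In particular $0\le\psi_t(x)\le x^2\wedge(2t|x|)$; bounding $\psi_t(x)$ by $2|x|\,t$ near $t=0$ and by $x^2$ near $t=\infty$ (where $\ga((1,\infty])<\infty$ follows from the integrability hypothesis) shows that the integral in \eqref{eq:f} converges for every $x$. The uniform limit \eqref{eq:psi_0+} then drops out of the two-case formula, since in both cases $\bigl|\tfrac1{2t}\psi_t(x)-|x|\bigr|\le t$.

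For the ``if'' direction, given an admissible $\ga$ I would put $f(x):=\int_{(0,\infty]}\psi_t(x)\,\ga(\dd t)$; evenness and $f(0)=0$ are immediate. Dominated convergence under the integral sign — with a local dominating function such as $t\mapsto(2t)\wedge(4x_0)$ near a point $x_0>0$, and $t\mapsto t\wedge1$ near $0$ — gives $f\in C^1(\R)$ together with $f'(x)=\int_{(0,\infty]}\psi_t'(x)\,\ga(\dd t)=2\int_{(0,\infty]}(x\wedge t)\,\ga(\dd t)$ for $x\ge0$, which is \eqref{eq:f'}; this $f'$ is nondecreasing and concave on $[0,\infty)$ because each $x\mapsto x\wedge t$ is, and $f'(0)=0$. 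Differentiating once more (again by dominated convergence, now using that the relevant difference quotients of $x\wedge t$ are supported on $\{t>x\}$ and bounded by $1$ there, and $\ga((x,\infty])<\infty$ for $x>0$) shows the right derivative of $f'$ is $f''(x)=2\ga((x,\infty])$ on $(0,\infty)$, which is nonnegative, nonincreasing and right-continuous. Hence $f\in\F_{1,2}$, and at the same time \eqref{eq:ga} holds for this $f$ and this $\ga$.

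For the ``only if'' direction and uniqueness, start from $f\in\F_{1,2}$. Since $f'$ is concave and nondecreasing on $[0,\infty)$, its right derivative $f''$ is nonnegative, nonincreasing and right-continuous there, so by the Lebesgue--Stieltjes construction there is a unique Borel measure $\ga$ on $(0,\infty]$ with $\ga((x,\infty])=\tfrac12 f''(x)$ for all $x>0$ and $\ga(\{\infty\})=\tfrac12\lim_{x\to\infty}f''(x)$; uniqueness holds because the sets $(x,\infty]$ form a $\sigma$-finite generating $\pi$-system. Admissibility of $\ga$ follows from Fubini: $\int_{(0,\infty]}(t\wedge1)\,\ga(\dd t)=\int_0^1\ga((s,\infty])\,\dd s=\tfrac12\int_0^1 f''(s)\,\dd s=\tfrac12 f'(1)<\infty$, where the last equality uses that $f'$, being concave on $[0,\infty)$ and continuous at $0$, is absolutely continuous on $[0,1]$ with a.e.\ derivative $f''$. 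Now set $\tilde f(x):=\int_{(0,\infty]}\psi_t(x)\,\ga(\dd t)$; by the ``if'' part just proved, $\tilde f\in\F_{1,2}$ and $\tilde f''=f''$ on $(0,\infty)$. Then $f'$ and $\tilde f'$ are both absolutely continuous on compact subintervals of $[0,\infty)$, share the same a.e.\ derivative, and agree at $0$, so $f'\equiv\tilde f'$ on $[0,\infty)$; integrating from $0$ and using evenness gives $f=\tilde f$, which is \eqref{eq:f}. This also shows that the $\ga$ in \eqref{eq:f} is forced to satisfy \eqref{eq:ga}, giving uniqueness, and \eqref{eq:f'} was already obtained in the previous paragraph.

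The whole argument is standard real analysis; the only places needing genuine care are (i) recognizing that $f''$ must be read as a one-sided derivative and checking that it is right-continuous, so that the Lebesgue--Stieltjes measure $\ga$ with $\ga((x,\infty])=\tfrac12 f''(x)$ exists and is unique, and (ii) verifying the admissibility condition $\int_{(0,\infty]}(t\wedge1)\,\ga(\dd t)<\infty$ — both of which reduce, via absolute continuity of $f'$ on compact subintervals of $[0,\infty)$ and continuity of $f'$ at $0$, to the finite quantity $\tfrac12 f'(1)$. I do not expect any essential obstacle beyond this bookkeeping.
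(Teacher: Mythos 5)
Your proof is correct and follows essentially the same route as the paper's: both identify $\ga$ as the Lebesgue--Stieltjes measure with $\ga((x,\infty])=\tfrac12 f''(x)$, exploit $\psi_t''=2\ii{\cdot<t}$, and reduce \eqref{eq:f} and \eqref{eq:f'} to an interchange of differentiation/integration with a Fubini step on $\int_0^x f''$. The only difference is organizational — you establish the ``if'' direction first and then derive the ``only if'' direction and uniqueness by comparing $f$ with $\tilde f=\int\psi_t\,\ga(\dd t)$, while the paper starts from the Fubini computation of $f'$ and handles uniqueness separately via a dominated-convergence limit of difference quotients — but the substance is the same.
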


Proposition~\ref{prop:F12} will be used in the proofs of most of the other results of this paper. 
%, as well as to construct examples of functions $f$ in the class $\F_{1,2}$. 

Note that the rays $\R_+\psi_t$ corresponding to
the functions $\psi_t$ \big(for $t\in(0,\infty]$\big) are precisely the extreme rays of the convex cone $\F_{1,2}$, where $\R_+f:=\{\la f\colon\la\in(0,\infty)\}$, for any $f\in\F_{1,2}\setminus\{0\}$. This follows because the rays $\R_+\ga_{\psi_t}=\R_+\de_t$ \big(with $t\in(0,\infty]$\big) 
are precisely the extreme rays of the corresponding convex cone $\{\ga_f\colon f\in\F_{1,2}\}$ of measures, where $\de_t$ stands for the Dirac measure at the point $t$. \big(A ray $\R_+f$ of a convex cone is called extreme if, for any nonzero $f_1$ and $f_2$ in the cone such that $f_1+f_2=f$, both $f_1$ and $f_2$ must lie on the ray.\big) 

Also, note that $\psi_t(x)=x^2\ii{|x|<t}+(2t|x|-t^2)\ii{|x|\ge t}$, so that $\psi_t(x)$ equals $x^2$ for small enough $|x|$ and is asymptotic to $2t|x|$ as $|x|\to\infty$. Thus, the ``extreme'' function $\psi_t$ is in a sense intermediate between the absolute powers $|\cdot|$ and $|\cdot|^2$. So, by \eqref{eq:f}, all functions $f\in\F_{1,2}$ inherit such a property. This should explain the choice of the notation $\F_{1,2}$. 

Classes of moment functions similar to $\F_{1,2}$ arise naturally in extremal problems in probability and statistics; see e.g.\ \cite{eaton1,utev-extr,T2,zinn_etal,pin98,pin99,spher,bent-liet02,bent-jtp,bent-ap,asymm,normal,pin-hoeff}; $\F_{1,2}$ is especially similar to the class $\mathcal{O}_{2,3}$ considered in \cite{zinn_etal}.  

Let us now give some examples of functions $f$ in $\F_{1,2}$. The ``extreme'' functions $\psi_t$ have been already mentioned. 
Perhaps the most important members of the class $\F_{1,2}$ are the power functions $|\cdot|^p$ with $p\in(1,2]$. The function $|\cdot|$ is not in $\F_{1,2}$, since it is not in $C^1(\R)$. 

It is easy to construct many other kinds of examples of functions $f\in\F_{1,2}$ by (i) letting $f''$ be \big(on $(0,\infty)$\big) any function, say $g$, which is nonnegative, nonincreasing, right-continuous, and integrable on any interval of the form $(0,u]$, for any $u\in(0,\infty)$; then (ii) finding $f$ on $[0,\infty)$ as the solution to the following initial value problem: $f(0)=f'(0)=0$ and $f''=g$ on $(0,\infty)$; and finally (iii) extending $f$ to the entire real line $\R$ as an even function. 

E.g., taking 
$g(x)=(1+x)^{p-2}$ for $p\in(1,2)$ and $x\in(0,\infty)$, one ends up with $f(x)=\frac1{p(p-1)}\,[(1+|x|)^p-1-p|x|]$ for all $x\in\R$, which is asymptotic to $\frac12\,x^2$ as $x\to0$ and to $\frac1{p(p-1)}\,|x|^p$ as $|x|\to\infty$; if the condition $p\in(1,2)$ is replaced here by $p\in(-\infty,0)\cup(0,1)$, then $f(x)$ is asymptotic to  $\frac{|x|}{1-p}$ as $|x|\to\infty$.   
%
%$g(x)=p(p-1)(1+x)^{p-2}$ for $p\in(1,2)$ and $x\in(0,\infty)$, one ends up with $f(x)=(1+|x|)^p-1-p|x|$ for all $x\in\R$, which is asymptotic to $p(p-1)x^2/2$ as $x\to0$ and to $|x|^p$ as $|x|\to\infty$.  
Similarly one can get $f(x)\equiv e^{-|x|}-1+|x|$ \big(by starting with $g(x)=e^{-x}$ for $x\in(0,\infty)$\big); $f(x)\equiv|x|-\ln(1+|x|)$ \big(with $g(x)\equiv\frac1{(1+x)^2}$\big); $f(x)\equiv|x|\ln(1+|x|)$ \big(with $g(x)\equiv\frac1{1+x}+\frac1{(1+x)^2}$\big). 

Perhaps a more interesting example is the following family of functions, which are parabolic splines (and will also be used in Remark~\ref{rem:no mono in s}):  
\begin{equation}\label{eq:f_alt}
f_\alt(x):=
   \frac{(|x|-x_j)^2}{2 (x_j+1)^{2/3}}
   + \sum _{k=0}^{j-1}\frac{\left[|x|-\frac{1}{2}(x_k+x_{k+1})\right] (x_{k+1}-x_k) }{(x_k+1)^{2/3}}
\end{equation}
if $x_j\le|x|<x_{j+1}$ and $j\in\intr0\infty$, where $x_0:=0$, $x_1$ is any positive real number, 
and $x_j:=q^{2^{j-1}}-1$ for $q:=x_1+1$ and all $j\in\intr2\infty$, so that $x_{j+1}+1=(x_j+1)^2$ for all $j=1,2,\dots$ (we use the standard conventions $a^{b^c}:=a^{(b^c)}$ and $\sum _{k=0}^{-1}\ldots:=0$). 

It is easy to check that $f_\alt\in\F_{1,2}$ and $f''_\alt(x)=(x_j+1)^{-2/3}=(x_{j+1}+1)^{-1/3}$ if $x_j\le|x|<x_{j+1}$ and $j\in\intr0\infty$, so that 
%far away from $0$ 
the function $f''_\alt$ alternates between the powers $(|\cdot|+1)^{-2/3}$ and $(|\cdot|+1)^{-1/3}$, as shown in the left panel of Figure~\ref{fig:oscil}. 
So, one might expect that the function $f_\alt$ alternates (far away from $0$) between something like the powers $|\cdot|^{-2/3+2}=|\cdot|^{4/3}$ and $|\cdot|^{-1/3+2}=|\cdot|^{5/3}$. This expectation is only partially justified. 

Indeed, introduce the (instantaneous) ``effective'' exponent of the function $f_\alt$ at a point $x\in\R\setminus\{0\}$ by the formula  
\begin{equation*}
	p_\eff(x):=\log_{|x|}f_\alt(x),\quad\text{so that}\quad f_\alt(x)=|x|^{p_\eff(x)}. 
\end{equation*}
The following proposition shows that the effective exponent $p_\eff$ eventually, ``in the limit'', alternates between $\frac32$ (rather than the expected $\frac43$) and $\frac53$. In this sense, one might say that $f''_\alt$ stays closer to $(|\cdot|+1)^{-1/3}$ than to $(|\cdot|+1)^{-2/3}$, ``most of the time''. 

\begin{proposition}\label{prop:osc} \ % For the function $f=f_\alt$, one has the following.  
\begin{enumerate}[(i)]
	\item $p_\eff(x)=\tp_\eff\big(\rho(x)\big)+o(1)$ as $x\to\infty$, where 
	$\tp_\eff(r):=(2-\frac2{3r})\vee(1+\frac2{3r})$ and $\rho(x):=2^{1-j}\log_q(x+1)$ for $x\in(x_j,x_{j+1}]$.  
	\item For each $j\in\intr1\infty$, the function $\rho$ increases from $1$ to $2$ on the interval $(x_j,x_{j+1}]$. 
\item For each $j\in\intr1\infty$, the approximate effective exponent $\tp_\eff\big(\rho(x)\big)$ decreases from $\frac53$ to $\frac32$ and then increases back to $\frac53$
as $x+1$ increases from $x_j+1$ to $(x_j+1)^{4/3}$ and then on to $x_{j+1}+1=(x_j+1)^2$, respectively. 
%For any $r\in[1,2]$, one has $\frac32=\tp_\eff(\frac43)\le \tp_\eff(r)\le \tp_\eff(1)=\tp_\eff(2)=\frac53$. 
\end{enumerate}
\end{proposition}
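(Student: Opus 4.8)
\emph{Proof proposal.}

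The plan is to dispose of (ii) and (iii) by direct inspection and to reduce (i) to estimating $\log_q f_\alt(x)$ up to a bounded additive error. Throughout write $a_j:=x_j+1$, so that $a_0=1$, $a_1=q$, $a_{j+1}=a_j^2$, hence $a_j=q^{2^{j-1}}$ and $\log_q a_j=2^{j-1}$ for $j\in\intr1\infty$; since each interval $(x_j,x_{j+1}]$ is bounded, $x\to\infty$ forces $j=j(x)\to\infty$. For $x>0$ with $x\in(x_j,x_{j+1}]$ put $t=t(x):=(x+1)/a_j\in(1,a_j]$, so $x+1=a_jt$, $\log_q(x+1)=2^{j-1}+\log_q t$, and $\rho(x)=2^{1-j}\log_q(x+1)=1+2^{1-j}\log_q t$. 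For (ii), $x\mapsto 2^{1-j}\log_q(x+1)$ is continuous and strictly increasing on $(x_j,x_{j+1}]$, with limit $2^{1-j}\log_q a_j=1$ at $x_j$ and value $2^{1-j}\log_q(a_j^2)=2$ at $x_{j+1}$. For (iii), since $\big(1+\tfrac2{3r}\big)-\big(2-\tfrac2{3r}\big)=\tfrac4{3r}-1$ is $>0$ for $r<\tfrac43$ and $<0$ for $r>\tfrac43$, we get $\tp_\eff(r)=1+\tfrac2{3r}$ (strictly decreasing) on $[1,\tfrac43]$ and $\tp_\eff(r)=2-\tfrac2{3r}$ (strictly increasing) on $[\tfrac43,2]$, with $\tp_\eff(1)=\tp_\eff(2)=\tfrac53$ and $\tp_\eff(\tfrac43)=\tfrac32$; moreover $\rho(x)=\tfrac43$ exactly when $\log_q(x+1)=\tfrac43\cdot2^{j-1}$, i.e.\ $x+1=a_j^{4/3}=(x_j+1)^{4/3}$. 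Combined with (ii), this yields (iii).

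For (i): by construction $f_\alt(0)=f'_\alt(0)=0$ and $f''_\alt$ equals $a_k^{-2/3}$ on $[x_k,x_{k+1})$, so for $x>0$, by Fubini's theorem, $f_\alt(x)=\int_0^x f'_\alt(u)\,\d u=\int_0^x(x-u)f''_\alt(u)\,\d u$. Splitting at $x_j$, write $f_\alt(x)=A+B$, where, using $x-x_j=a_j(t-1)$,
\[
B:=\int_{x_j}^x(x-u)f''_\alt(u)\,\d u=\tfrac12\,a_j^{-2/3}(x-x_j)^2=\tfrac12\,a_j^{4/3}(t-1)^2,
\]
and $A:=\int_0^{x_j}(x-u)f''_\alt(u)\,\d u$. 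I claim $\tfrac18\,a_j^{5/3}t\le A\le C_q\,a_j^{5/3}t$ for all large $j$ and all $t\ge1$, with $C_q$ depending only on $q$. For the upper bound, $A\le(x+1)\int_0^{x_j}f''_\alt(u)\,\d u=a_jt\sum_{k=0}^{j-1}(a_{k+1}-a_k)a_k^{-2/3}\le a_jt\sum_{k=0}^{j-1}a_k^{4/3}$, and since $a_k^{4/3}/a_{k+1}^{4/3}=q^{-4\cdot2^{k-1}/3}\le q^{-4/3}<1$ for $k\ge1$, the sum is a geometric-type tail dominated by its last term: $\sum_{k=0}^{j-1}a_k^{4/3}\le C_q\,a_{j-1}^{4/3}=C_q\,a_j^{2/3}$. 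For the lower bound, keep only the last subinterval before $x_j$: since $f''_\alt=a_{j-1}^{-2/3}$ there and $\int_{x_{j-1}}^{x_j}(x-u)\,\d u=(x_j-x_{j-1})\cdot\tfrac{(x-x_{j-1})+(x-x_j)}2\ge\tfrac12(x_j-x_{j-1})(x-x_{j-1})$ (as $x\ge x_j$), we get $A\ge\tfrac12\,a_{j-1}^{-2/3}(x_j-x_{j-1})(x-x_{j-1})\ge\tfrac18\,a_j^{5/3}t$, using $a_{j-1}^{-2/3}=a_j^{-1/3}$, $x_j-x_{j-1}=a_{j-1}^2-a_{j-1}\ge\tfrac12 a_j$, and $x-x_{j-1}=a_jt-a_j^{1/2}\ge\tfrac12 a_jt$, all valid once $a_j\ge4$. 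Finally $0\le B\le\tfrac12 a_j^{4/3}t^2$, $B\ge\tfrac18 a_j^{4/3}t^2$ for $t\ge2$, and $a_j^{4/3}t^2\le a_j^{5/3}t$ for $1\le t\le2$ and $a_j$ large; hence
\[
\tfrac18\,M(x)\le f_\alt(x)=A+B\le(C_q+\tfrac12)\,M(x),\qquad M(x):=\max\big(a_j^{5/3}t,\ a_j^{4/3}t^2\big),
\]
for all large $j$.

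Now the bookkeeping. Put $L:=2^{j-1}=\log_q a_j\to\infty$ and $\tau:=\log_q t\in[0,L]$, so $s:=\rho(x)-1=\tau/L\in[0,1]$. From the last display, $\log_q f_\alt(x)=\log_q M(x)+O(1)=\max\big(\tfrac53L+\tau,\ \tfrac43L+2\tau\big)+O(1)$, while $\log_q x=\log_q(x+1)+o(1)=L+\tau+o(1)$; since numerator and denominator are of exact order $L$ while the numerator error is bounded and the denominator error tends to $0$,
\[
p_\eff(x)=\frac{\log_q f_\alt(x)}{\log_q x}
=\frac{\max\big(\tfrac53L+\tau,\ \tfrac43L+2\tau\big)}{L+\tau}+O(1/L)
=\frac{\max\big(\tfrac53+s,\ \tfrac43+2s\big)}{1+s}+O(2^{-j}).
\]
A one-line check gives $\dfrac{\max(\tfrac53+s,\tfrac43+2s)}{1+s}=\tp_\eff(1+s)=\tp_\eff(\rho(x))$: for $s\le\tfrac13$ both sides equal $\dfrac{5+3s}{3(1+s)}$, and for $s\ge\tfrac13$ both equal $\dfrac{4+6s}{3(1+s)}$. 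Therefore $p_\eff(x)=\tp_\eff(\rho(x))+O(2^{-j(x)})=\tp_\eff(\rho(x))+o(1)$ as $x\to\infty$, which is (i). The main obstacle is the two-sided estimate $\tfrac18 M(x)\le f_\alt(x)\le(C_q+\tfrac12)M(x)$: one must recognize that the effective exponent is governed by the competition between the ``accumulated'' part $A$ (of order $a_j^{5/3}t$, produced essentially by the last subinterval before $x_j$ because the lacunarity $x_{j+1}+1=(x_j+1)^2$ collapses the auxiliary sums to their last terms) and the ``current'' quadratic part $B=\tfrac12 a_j^{4/3}(t-1)^2$, and one must keep the comparison constants controlled through the partial cancellation near $t=1$, where $A$ and $B$ share the leading power $a_j^{5/3}$. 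The remaining computations are elementary.
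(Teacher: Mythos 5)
Your proof is correct, and it follows essentially the same decomposition as the paper's proof of part (i). Writing $f_\alt(x)=\int_0^x(x-u)f''_\alt(u)\,\d u$ and splitting at $x_j$ reproduces exactly the two pieces of the explicit spline formula \eqref{eq:f_alt}: your $B$ is the quadratic term $\frac{(x-x_j)^2}{2(x_j+1)^{2/3}}$, and your $A$ equals the sum $\sum_{k=0}^{j-1}[\cdots]$, since $\int_{x_k}^{x_{k+1}}(x-u)\,\d u=(x_{k+1}-x_k)\big(x-\tfrac12(x_k+x_{k+1})\big)$. The paper then runs both pieces through soft $o(1)$-in-the-exponent asymptotics, whereas you produce hard two-sided bounds $\tfrac18\,M(x)\le f_\alt(x)\le(C_q+\tfrac12)M(x)$ with explicit constants and take logarithms at the end. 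The upshot is a slightly sharper statement: an $O(2^{-j(x)})$ error rather than a bare $o(1)$. Parts (ii) and (iii) are handled in both arguments by the same direct inspection. So this is the same route, with the student's version being the more quantitative bookkeeping of the two.
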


Part of the graph of the (exact) effective exponent $p_\eff$ (with $x_1=\frac1{10}$) is shown in the right panel of Figure~\ref{fig:oscil}. Recall that the $x_j$'s grow very fast in $j$ for large $j$. Therefore,  
for better presentation, the horizontal axis in the right panel is nonlinearly rescaled so that the $x_j$'s appear equally spaced. Namely, what is actually shown here is part of the graph $\{\big(\log_2\log_q(x+1),p_\eff(x)\big)\colon x>x_1\}$; note that $\log_2\log_q(x_j+1)=j-1$ for all $j\in\intr1\infty$.  

\begin{figure}[!Hhtb]
	\centering		\includegraphics[width=1.00\textwidth]{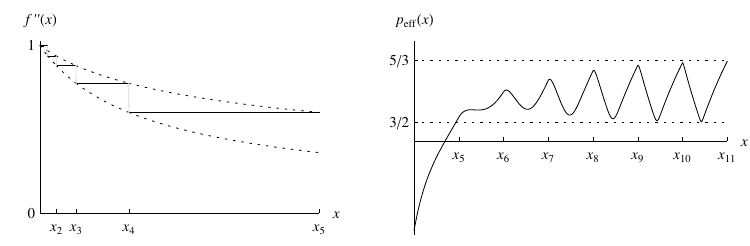}
	\caption{Left panel: $f''$ (solid) for $f=f_\alt$ alternates between $(|\cdot|+1)^{-2/3}$ (dotted) and $(|\cdot|+1)^{-1/3}$ (dotted). 
	Right panel: the effective exponent $p_\eff$ (solid) for $f=f_\alt$ eventually alternates between $\frac32$ (dotted) and $\frac53$ (dotted). }
	\label{fig:oscil}
\end{figure}

\subsubsection{On the best possible constant $C_f$ in general and, in particular, for the power and extreme functions}\label{C_f} 

The following proposition concerns some general properties of the constant factor $C_f$ for nonzero $f$ in $\F_{1,2}$ except for $f=\psi_\infty$; in the latter, trivial case, one has $C_f=1$, as also stated in Proposition~\ref{prop:C_psi}; recall that $\psi_\infty(x)=x^2$ for all $x\in\R$.  

\begin{proposition}\label{prop:C_f}\ 
Take any $f\in\F_{1,2}\setminus\{0,\psi_\infty\}$. Let $s_f:=\inf\supp\ga$, where $\supp\ga$ stands for the support of the measure $\ga=\ga_f$ defined in Proposition~\ref{prop:F12}. 
%Note that $s_f\in(0,\infty]$. 
Recall the definition of $L_{f;s}(x)$ in  \eqref{eq:L}. 
%$s_f:=\sup\{x\in[0,\infty)\colon f''(x)>0$, so that $s_f\in(0,\infty]$. 
Then the following statements hold. 
\begin{enumerate}[(i)]
\item $s_f\in[0,\infty)$. 
	\item For any $s\in(0,s_f]$, one has $L_{f;s}(x)=f(s)$ for all $x\in(0,s)$. 
	\item For any $s\in(s_f,\infty)$, one has $L'_{f;s}(0+)>0$ and $L'_{f;s}(s-)<0$. 
	\item For any $s\in(0,\infty)$, there is some (not necessarily unique) $x_{f;s}\in(0,s)$ such that 
	$L_{f;s}(x)$ is nondecreasing in $x\in(0,x_{f;s}]$ and nonincreasing in $x\in[x_{f;s},s)$. 
		\item One has 	
\begin{align*}
C_f&=\sup\nolimits_{s\in(s_f,\infty)}\big[\tfrac1{f(s)}\,\max\nolimits_{x\in(0,s)}L_{f;s}(x)\big] %\label{eq:C_2=sup_s}
\\
	&	=\sup\nolimits_{s\in(s_f,\infty)}\big[\tfrac1{f(s)}\,L_{f;s}(x_{f;s})\big]>1. 	
\end{align*}
\end{enumerate}
\end{proposition}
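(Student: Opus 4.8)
The plan is to work throughout with the representation of Proposition~\ref{prop:F12}, writing $\ga:=\ga_f$, and to reduce everything to one explicit formula for the right derivative $L'_{f;s}$ of $L_{f;s}$ on $(0,s)$. Since $f$ is even, $f'$ is odd, so for $0<x<s$, by \eqref{eq:f'} and \eqref{eq:ga},
\[
L'_{f;s}(x)=f'(x-s)-f'(x)+sf''(x)=-f'(s-x)-f'(x)+sf''(x)=2\int_{(0,\infty]}g_{s,x}(t)\,\ga(\dd t),
\]
with $g_{s,x}(t):=s\,\ii{t>x}-(x\wedge t)-((s-x)\wedge t)$. The first task is a routine case check of $g_{s,x}$: it vanishes for $t\ge x\vee(s-x)$ (in particular at $t=\infty$), equals $-2t$ on $(0,x\wedge(s-x)]$, and on the remaining middle interval equals $s-x-t\ (\ge0)$ if $x\le s/2$ but $-(t+s-x)\ (<0)$ if $x>s/2$. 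From this I extract the two facts used below: (a) $L'_{f;s}\le0$ on $[s/2,s)$; and (b) for $x\in(0,s/2]$, $\tfrac12L'_{f;s}(x)=\int_{(x,s-x)}(s-x-t)\,\ga(\dd t)-2\int_{(0,x]}t\,\ga(\dd t)$, which is nonincreasing in $x$ on $(0,s/2]$ since, as $x$ grows, the first integral has a shrinking domain and decreasing integrand while the second has a growing domain and nonnegative integrand.

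Parts (i) and (ii) are short. For (i): $f\ne0$ forces $\ga\ne0$, and (since $C_f$ is unchanged under positive rescaling of $f$ we may exclude $f\in\R_+\psi_\infty$, i.e.\ $\ga$ concentrated at $\infty$) $\supp\ga$ meets $(0,\infty)$, so $s_f=\inf\supp\ga\in[0,\infty)$. For (ii): if $s\le s_f$ then $\ga((0,s))=0$, and since $g_{s,x}$ is supported (as a nonzero function) in $(0,x\vee(s-x))\subseteq(0,s)$, the formula of the previous paragraph gives $L'_{f;s}\equiv0$ on $(0,s)$; as $L_{f;s}$ is continuous and locally absolutely continuous on $[0,s)$ with $L_{f;s}(0)=f(-s)-f(0)+sf'(0)=f(s)$ (using $f$ even, $f(0)=f'(0)=0$), we get $L_{f;s}\equiv f(s)$ on $(0,s)$.

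For (iii), fix $s>s_f$, so $\ga((0,s))>0$. Letting $x\downarrow0$ in (b): the second integral is at most $\int_{(0,x]}(t\wedge1)\,\ga(\dd t)\to0$, while the first increases, by monotone convergence, to $\int_{(0,s)}(s-t)\,\ga(\dd t)\in(0,\infty]$; hence $L'_{f;s}(0+)=2\int_{(0,s)}(s-t)\,\ga(\dd t)>0$ (possibly $+\infty$). Letting $x\uparrow s$ in the companion formula for $x>s/2$, $\tfrac12L'_{f;s}(x)=-2\int_{(0,s-x]}t\,\ga(\dd t)-\int_{(s-x,x)}(t+s-x)\,\ga(\dd t)$: the first term $\to0$, and since $0\le t+s-x\le2t$ on $(s-x,x)$ with $t\mapsto2t$ being $\ga$-integrable on $(0,s)$ (because $\int(t\wedge1)\,\ga(\dd t)<\infty$), dominated convergence gives $L'_{f;s}(s-)=-2\int_{(0,s)}t\,\ga(\dd t)<0$. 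I expect the main obstacle to be precisely this kind of integrability bookkeeping near $0$ (where $\ga$ may be infinite and $L'_{f;s}(0+)$ may be $+\infty$), together with getting the case analysis of $g_{s,x}$ in the first paragraph exactly right; everything afterwards is formal.

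Finally, (iv) follows by assembling (a) and (b): $L'_{f;s}$ is nonincreasing on $(0,s/2]$ and $\le0$ on $[s/2,s)$, so $A:=\{x\in(0,s/2]\colon L'_{f;s}(x)\ge0\}$ is an initial segment of $(0,s/2]$, nonempty because either $s>s_f$ and $L'_{f;s}>0$ near $0$ by (iii), or $s\le s_f$ and $L'_{f;s}\equiv0$; taking $x_{f;s}:=\sup A\in(0,s/2]\subset(0,s)$ gives $L'_{f;s}\ge0$ on $(0,x_{f;s})$ and $L'_{f;s}\le0$ on $(x_{f;s},s)$, hence the stated monotonicity of $L_{f;s}$ by absolute continuity. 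For (v): in $C_f=\sup_{0<x<s<\infty}L_{f;s}(x)/f(s)$ the part with $s\in(0,s_f]$ contributes exactly $1$ by (ii), while for $s\in(s_f,\infty)$ one has $f(s)>0$ and $\max_{x\in(0,s)}L_{f;s}(x)=L_{f;s}(x_{f;s})$ by (iv), so the inner supremum over $x$ equals $L_{f;s}(x_{f;s})/f(s)$. Since for any $s>s_f$ we have $L_{f;s}(0+)=f(s)$ and $L'_{f;s}(0+)>0$ by (iii), it follows that $L_{f;s}(x)>f(s)$ for all sufficiently small $x>0$, so $L_{f;s}(x_{f;s})/f(s)>1$; hence the $s\le s_f$ part is dominated, and $C_f=\sup_{s\in(s_f,\infty)}L_{f;s}(x_{f;s})/f(s)>1$.
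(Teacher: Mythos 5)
Your proof is correct and, for part (iv), follows a genuinely different and in fact more elementary route than the paper's. The paper reduces (iv) (after rescaling to $s=1$) to the claim that $z\mapsto L_{f;1}(1-\sqrt z)$ is concave on $(0,1)$, and then verifies this concavity for the extreme functions $\psi_t$ by Mathematica (\texttt{PiecewiseExpand}, \texttt{Reduce}). You instead derive the explicit integral formula $\tfrac12L'_{f;s}(x)=\int_{(0,\infty]}g_{s,x}(t)\,\ga(\dd t)$ with $g_{s,x}(t)=s\,\ii{t>x}-(x\wedge t)-((s-x)\wedge t)$, read off from the case analysis of $g_{s,x}$ that $L'_{f;s}$ is nonincreasing on $(0,s/2]$ and nonpositive on $[s/2,s)$, and this sign pattern is all that (iv) requires. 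This is a weaker structural fact than the paper's concavity statement, but it is enough, and it is entirely hand-checkable — which is a real simplification. Your derivations of (ii) and (iii) from the same integral formula (rather than from the per-$\psi_t$ identities $L_{\psi_t;s}\equiv\psi_t(s)$ for $t\ge s$, $L'_{\psi_t;s}(0+)=2(s-t)_+$, $L'_{\psi_t;s}(s-)=-2t\ii{t<s}$ that the paper integrates against $\ga$) are also valid and consistent. Your remark in (i) that the hypothesis $f\ne\psi_\infty$ should really be read as $f\notin\R_+\psi_\infty$ (equivalently, $\ga$ is not concentrated at $\{\infty\}$) is a legitimate observation; the paper's own argument has the same slight imprecision, and your fix is the right one.

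One small inaccuracy, which does not affect any of the conclusions: you state that $g_{s,x}$ vanishes for $t\ge x\vee(s-x)$, but when $x>s/2$ one has $g_{s,x}(x)=-s\ne0$, so the vanishing is only for $t>x\vee(s-x)$, and the middle interval where $g_{s,x}(t)=-(t+s-x)$ is $(s-x,x]$ rather than $(s-x,x)$. Since $-(x+s-x)=-s$, your formula already extends correctly to the endpoint, and the sign $g_{s,x}\le0$ that you use for (a) is preserved; the open-versus-closed distinction is likewise harmless in (iii), where you only pass to the limit $x\uparrow s$ via dominated convergence. So no step actually fails; just tighten the bookkeeping at $t=x$ when $x>s/2$.
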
 

\begin{remark}\label{rem:no mono in s}
Proposition~\ref{prop:C_f} provides for an effective maximization of $L_{f;s}(x)$ in $x\in(0,s)$, for any given $s\in(0,\infty)$, so that 
$\mathcal{L}_f(s):=\tfrac1{f(s)}\,\max\nolimits_{x\in(0,s)}L_{f;s}(x)=\tfrac1{f(s)}\,L_{f;s}(x_{f;s})$ can be effectively found. In the important special case when $f$ is a power function $|\cdot|^p$ \big(with $p\in(1,2]$\big), one can also use the homogeneity of $f$ in order to compute the constant $C_f$ quite effectively, as described in Proposition~\ref{prop:C_powers}. However, in general it remains to maximize  $\mathcal{L}_f(s)$ in $s\in(s_f,\infty)$. It appears that usually $\mathcal{L}_f(s)$ is monotonically nondecreasing in $s$, if the function $f$ is not too irregular; one ``exceptional'' function $f$ for which $\mathcal{L}_f$ lacks such a monotonicity property is a function $f_\alt$ of the ``alternating'' family described by formula \eqref{eq:f_alt}.   
Indeed, take $f=f_\alt$ with $x_1=\frac15$. 
Then 
%, using the Mathematica command \verb9Maximize9, % file oscil_new.nb 
%one finds that 
$\mathcal{L}(\frac{107}{100})<\mathcal{L}(\frac{106}{100})$. 
One may still ask whether it is true for all $f\in\F_{1,2}$ that the limit $\mathcal{L}_f(\infty-)$ exists, and if so, whether it is true that $\mathcal{L}_f(s)\le\mathcal{L}_f(\infty-)$ for all $s\in(s_f,\infty)$, so that $C_f$ be found as $\mathcal{L}_f(\infty-)$. 
In any case, Theorem~\ref{th:} reduces the problem of finding the optimal constant $C$ in \eqref{eq:} to a maximization just in two real variables, $s$ and $x$, which should not usually be too difficult. % numerically.   
\end {remark}

Now let us provide a simple description of the constant $C_f$ in the case when $f$ is an ``extreme'' function $\psi_t$, representing the extreme rays of the convex cone $\F_{1,2}$:  

\begin{proposition}\label{prop:C_psi}\ 
One has  
$C_{\psi_t}=2$ for each $t\in(0,\infty)$, whereas $C_{\psi_\infty}=1$. 
\end{proposition}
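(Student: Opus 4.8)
The plan is to compute $C_{\psi_t}$ directly from the definition \eqref{eq:C_f}, using the explicit piecewise formula $\psi_t(x)=x^2\ii{|x|<t}+(2t|x|-t^2)\ii{|x|\ge t}$ noted just after Proposition~\ref{prop:F12}. The case $t=\infty$ is trivial: $\psi_\infty(x)=x^2$, so $L_{\psi_\infty;s}(x)=(x-s)^2-x^2+s\cdot 2x=s^2=\psi_\infty(s)$ for all $0<x<s$, whence the ratio in \eqref{eq:C_f} is identically $1$ and $C_{\psi_\infty}=1$. (Alternatively, invoke Proposition~\ref{prop:C_f}: $\psi_\infty$ is excluded there precisely because it is the degenerate case $s_f=\infty$.)

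For $t\in(0,\infty)$ I would first record that $\psi_t\in\F_{1,2}$ with $\ga_{\psi_t}=\de_t$, so $s_{\psi_t}=t\in(0,\infty)$, and therefore Proposition~\ref{prop:C_f}(v) applies: $C_{\psi_t}=\sup_{s>t}\mathcal L_{\psi_t}(s)>1$, where $\mathcal L_{\psi_t}(s)=\frac1{\psi_t(s)}\max_{x\in(0,s)}L_{\psi_t;s}(x)$. Fix $s>t$. I would split the maximization over $x\in(0,s)$ according to the three regions determined by the breakpoints of $\psi_t$ at $x=t$ and $x=s-t$ (note $\psi_t$ is affine, equal to $2t|\cdot|-t^2$, on $[t,\infty)$, and $\psi_t'(x)=2(x\wedge t)$ by \eqref{eq:f'}). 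On the region $x\ge t$ one has $f'(x)=2t$, so $L_{\psi_t;s}(x)=\psi_t(x-s)-\psi_t(x)+2ts$; since $\psi_t$ is convex and, for $x$ in this range, $\psi_t(x)=2tx-t^2$ is affine while $\psi_t(x-s)=\psi_t(s-x)$, the function $L_{\psi_t;s}$ is nonincreasing there, so the maximum over $x\in(0,s)$ is attained at some $x\le t$. For $x\le t$ we have $f'(x)=2x$ and $f(x)=x^2$, giving $L_{\psi_t;s}(x)=\psi_t(x-s)-x^2+2xs=\psi_t(s-x)+2xs-x^2$; writing $s-x=:y\in(s-t,s)$ and using $\psi_t(y)=2ty-t^2$ (valid since $y>s-t$, and also $y\ge t$ will hold in the relevant subrange as $s>t$), this becomes a concave quadratic in $x$ which I maximize on $[0,t]$. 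A short computation should show the supremum of $\frac{L_{\psi_t;s}(x_{f;s})}{\psi_t(s)}$ over $x$ and then over $s>t$ equals $2$, with the value $2$ approached (e.g.) by letting $x\to t$ and then $s\to\infty$, or by the extremal configuration in Theorem~\ref{th:}(II).

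The main obstacle is purely bookkeeping: one must carefully track which linear piece of $\psi_t$ governs each of $\psi_t(x-s)$, $\psi_t(x)$, and $\psi_t(s)$ as $x$ ranges over $(0,s)$ and as $s$ ranges over $(t,\infty)$, since the breakpoints $t$ and $s-t$ cross each other when $s=2t$. Handling the two cases $t<s\le 2t$ and $s>2t$ separately, and checking that in the latter (where the ratio is largest) $\frac1{\psi_t(s)}L_{\psi_t;s}(x_{f;s})\nearrow 2$ as $s\to\infty$, is where the care is needed. The upper bound $C_{\psi_t}\le 2$ is of course also immediate from Theorem~\ref{th:}(III), so strictly speaking only the lower bound $C_{\psi_t}\ge 2$ needs the computation; one can even get this lower bound cheaply by exhibiting, for each $s$ large, a single good choice of $x$ (say $x=t$), evaluating $\frac{L_{\psi_t;s}(t)}{\psi_t(s)}=\frac{\psi_t(s-t)-t^2+2ts}{2ts-t^2}=\frac{2t(s-t)-t^2-t^2+2ts}{2ts-t^2}=\frac{4ts-4t^2}{2ts-t^2}\to 2$ as $s\to\infty$, which already forces $C_{\psi_t}\ge 2$ and hence, with Theorem~\ref{th:}(III), $C_{\psi_t}=2$.
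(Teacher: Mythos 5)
Your treatment of $t=\infty$ is fine and identical to the paper's. For $t\in(0,\infty)$, your explicit evaluation at $x=t$ (giving $4(s-t)/(2s-t)\to2$ as $s\to\infty$ for $s>2t$) matches the paper's computation up to the rescaling $t=1$, and your bookkeeping sketch --- maximize $L_{\psi_t;s}$ in $x$, then take the supremum over $s$ --- is exactly the paper's strategy, via Proposition~\ref{prop:C_f}(iv,v). The paper locates the maximizer cleanly without splitting into regions: it simply checks the signs of the one-sided derivatives $L'_{\psi_1;s}(1\pm)=-2(s\wedge2)$ and $-2(s\wedge2)+2s$, which by the unimodality in Proposition~\ref{prop:C_f}(iv) pins the maximum at $x=1$, and then computes $\sup_{s>1}[s^2-(s-2)_+^2]/[s^2-(s-1)_+^2]=2$.

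However, the ``cheap'' shortcut in your last paragraph --- invoking Theorem~\ref{th:}(III) to get $C_{\psi_t}\le2$ so that only the lower bound needs computation --- is circular. The paper's proof of Theorem~\ref{th:}(III) obtains $C_f\le2$ for general $f\in\F_{1,2}\setminus\{0\}$ precisely by reducing via \eqref{eq:f} to the extreme rays $\psi_t$ and then citing \emph{this} Proposition~\ref{prop:C_psi} to know $L_{\psi_t;s}(x)\le2\psi_t(s)$. So you cannot use Theorem~\ref{th:}(III) here without a circle. The fix is to abandon that shortcut and instead complete the maximization you sketch: once you have $\max_{x\in(0,s)}L_{\psi_t;s}(x)=L_{\psi_t;s}(t)$ (for $s\ge2t$, which you essentially establish in your second paragraph; the case $t<s<2t$ gives a smaller ratio and can be checked separately or dismissed since the sup over $s$ is a limit as $s\to\infty$), the finite explicit formula $L_{\psi_t;s}(t)/\psi_t(s)=4(s-t)/(2s-t)$ yields $\sup_{s>t}=\lim_{s\to\infty}=2$ directly, giving both the lower \emph{and} upper bound with no appeal to Theorem~\ref{th:}(III).
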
 

\begin{remark}\label{rem:nonlin}
Proposition~\ref{prop:C_psi} might seem quite surprising: whereas, by Theorem~\ref{th:}, the range of the values of $C_f$ over all nonzero $f$ in the convex cone $\F_{1,2}$ is the entire interval $[1,2]$, the only value that $C_f$ takes on all the extreme rays $\R_+\psi_t$ \big(which span the cone $\F_{1,2}$ in the sense of \eqref{eq:f}\big) is $2$. This suggests strong nonlinearity of the optimal constant factor $C_f$ in $f$. 
However, as seen from the proof of Proposition~\ref{prop:C_psi}, the fact that $C_{\psi_t}$ is the same for all $t\in(0,\infty)$ is due to a simple homogeneity property. Note also the discontinuity of $C_{\psi_t}$ in $t$ at $t=\infty$.
\end{remark}

As mentioned earlier, for any $p\in(1,2]$ the power function $|\cdot|^p$ belongs to the class $\F_{1,2}$; for such $p$, consider the corresponding constant factor 
\begin{equation*}
	\tC_p:=C_{|\cdot|^p}, 
\end{equation*} 
so that for any v-martingale $(S_j)_{j=1}^n$ 
\begin{equation}\label{eq:powers}
	\E|S_n|^p\le\E|X_1|^p+\tC_p\sum_{j=2}^n\E|X_i|^p.  
\end{equation}
Note that $|\cdot|^2=\psi_\infty$, so that, by Proposition~\ref{prop:C_psi}, 
\begin{equation}\label{eq:tC_2}
	\tC_2=1. 
\end{equation}

\begin{proposition}\label{prop:C_powers}\ 
\begin{enumerate}[(i)]
%	\item For any $p\in(1,2]$, the power function $|\cdot|^p$ belongs to the class $\F_{1,2}$. 
\item For any $p\in(1,2)$  
\begin{equation*}
	\tC_p=\ell(p,x_p)=\max_{x\in(0,1)}\ell(p,x), 
\end{equation*}
where 
\begin{equation}\label{eq:ell}
	\ell(p,x):=L_{|\cdot|^p;1}(x)=(1-x)^p-x^p+px^{p-1}
\end{equation}
for $x\in(0,1)$, and $x_p$ is the only root $x\in(0,1)$ of the equation 
\begin{equation}\label{eq:x_p eq}
	(1-x)^{p-1}+x^{p-1}=(p-1)x^{p-2}. 
\end{equation}
Moreover, $\ell(p,x)$ is increasing in $x\in(0,x_p)$ and decreasing in $x\in(x_p,1)$, for each $p\in(1,2)$. 
\item In fact, $x_p\in(\frac{p-1}5,\frac{p-1}2)\subset(0,\frac12)$ for all $p\in(1,2)$. 
%\item Moreover, $\ell(p,x)$ is increasing in $x\in(0,x_p)$ and decreasing in $x\in(x_p,\frac12)$, for each $p\in(1,2)$.  
\item Further, $\tC_p$ is continuously (and strictly) decreasing in $p\in(1,2]$ from $\tC_{1+}=2$ to $\tC_2=1$; furthermore, $\tC_p$ is real-analytic in $p\in(1,2)$. 
%\item The function $(1,2]\ni p\mapsto\tC_p:=C_{|\cdot|^p}$ is continuously decreasing from $2$ to $1$; moreover, this function is real-analytic on the interval $(1,2)$. 
\item The values $\tC_p$ are algebraic for all rational $p\in(1,2]$; in particular, $\tC_{3/2}=\sqrt{1+\frac{1}{\sqrt{2}}}=1.306\dots$ (with $x_{3/2}=\frac{1}{4} \left(2-\sqrt{2}\right)=0.146\dots$). 
%\item A simple upper bound on $\tC_p$ is given by the inequality 
%$\tC_p<W_p:=2^{2-p}$ for all $p\in(1,2)$; this bound is exact at the endpoints of the interval $(1,2)$ in the sense that $\tC_{1+}=W_{1+}$ and $\tC_2=\tC_{2-}=W_{2-}=W_2$. 
\item Explicit upper and lower bounds on $\tC_p$ are given by the inequalities
\begin{equation}\label{eq:<tC_p<}
\tC_p^{-,1}\vee\tC_p^{-,2}<\tC_p<\tC_p^{+,1}\wedge\tC_p^{+,2}\le\tC_p^{+,2}<W_p	
\end{equation}
for all $p\in(1,2)$, where 
\begin{align*}
	\tC_p^{-,1}:=&2^{-p} \big((3 - p)^p + (p-1)^{p - 1} (p + 1)\big), \\
	\tC_p^{-,2}:=&5^{-p} \big((6 - p)^p + (p-1)^{p - 1} (4 p+1)\big), \\
	\tC_p^{+,1}:=&\tfrac{2^{- p}}{50(3 - p)}  \big((p-1)^{p-1} (150 + 181 p - 152 p^2 + 21 p^3) \\
	&\qquad\quad+ (3 - p)^{p-1}(450 - 381 p + 152 p^2 - 21 p^3)\big), \\
	\tC_p^{+,2}:=&\tfrac{5^{-p}}{8(6 - p)}  \big(4(p-1)^{p-1} (12 - 35 p + 94 p^2 - 21 p^3) \\
	&\qquad\quad+ (6 - p)^{p-1}(288 - 15 p - 94 p^2 + 21 p^3)\big), \\
	W_p:=&2^{2-p}. 
\end{align*}
The upper bound $W_p$ on $\tC_p$ is exact at the endpoints of the interval $(1,2)$ in the sense that $\tC_{1+}=W_{1+}$ and $\tC_2=\tC_{2-}=W_{2-}=W_2$;  
each of the bounds $\tC_p^{-,1}$, $\tC_p^{-,2}$, $\tC_p^{+,1}$, and $\tC_p^{+,2}$ is also exact in the similar sense.  
\end{enumerate}
\end{proposition}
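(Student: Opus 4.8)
The plan is to exploit homogeneity of $f=|\cdot|^p$ throughout. Since $|\cdot|^p$ is positively homogeneous of degree $p$, one has $L_{|\cdot|^p;s}(x)=s^p\,\ell(p,x/s)$ and $f(s)=s^p$, so that $L_{f;s}(x)/f(s)=\ell(p,x/s)$ depends only on the ratio $x/s\in(0,1)$. Hence the double supremum defining $C_f$ in \eqref{eq:C_f} collapses to $\tC_p=\sup_{0<x<1}\ell(p,x)$, with $\ell$ as in \eqref{eq:ell}. For part~(i), I would then study $\ell(p,\cdot)$ on $(0,1)$ directly: compute $\partial_x\ell(p,x)=-p(1-x)^{p-1}-px^{p-1}+p(p-1)x^{p-2}$, so that the critical-point equation $\partial_x\ell(p,x)=0$ is exactly \eqref{eq:x_p eq}. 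Writing $h(x):=(p-1)x^{p-2}-(1-x)^{p-1}-x^{p-1}$, one checks $h(0+)=+\infty$, $h(1-)=p-2<0$, and (using $p\in(1,2)$, so $p-2<0$ and $p-1\in(0,1)$) that $h$ is strictly decreasing on $(0,1)$: indeed $h'(x)=(p-1)(p-2)x^{p-3}+(p-1)(1-x)^{p-2}-(p-1)x^{p-2}$, and here $(p-1)(p-2)x^{p-3}<0$ dominates — more carefully, one can factor and compare the three terms, or invoke concavity of $f'$ from the definition of $\F_{1,2}$ together with Proposition~\ref{prop:C_f}(iv), which already guarantees a unique-in-the-relevant-sense maximizer $x_{f;1}=x_p$ with $\ell$ increasing then decreasing. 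That simultaneously yields the monotonicity-of-$\ell$ claim and uniqueness of the root $x_p$.

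For part~(ii), the bracket $x_p\in(\tfrac{p-1}5,\tfrac{p-1}2)$, I would plug $x=\tfrac{p-1}5$ and $x=\tfrac{p-1}2$ into the sign of $h$ (equivalently $\partial_x\ell$) and show $h(\tfrac{p-1}5)>0>h(\tfrac{p-1}2)$ for all $p\in(1,2)$; since $h$ is decreasing this pins $x_p$ between them. This reduces to two one-variable inequalities in $p$ on $(1,2)$, which I expect to be provable by elementary estimates (e.g. bounding $(1-x)^{p-1}\le 1-(p-1)x+\binom{p-1}{2}x^2\cdots$ via convexity/Bernoulli-type bounds) though the algebra is somewhat delicate near $p=1$ and $p=2$. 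For part~(iii), real-analyticity of $\tC_p=\ell(p,x_p)$ in $p\in(1,2)$ follows from the implicit function theorem applied to \eqref{eq:x_p eq}, whose $x$-derivative $h'(x_p)$ is nonzero (strict monotonicity of $h$); strict decrease of $\tC_p$ in $p$ can be obtained by the envelope theorem, $\tfrac{d}{dp}\tC_p=\partial_p\ell(p,x)\big|_{x=x_p}$ (the $\partial_x$ term vanishing at the optimum), and then showing $\partial_p\ell(p,x)<0$ for $x\in(0,\tfrac12)$; the boundary values $\tC_{1+}=2$ and $\tC_2=1$ come from $\tC_2=1$ via \eqref{eq:tC_2} and from a limiting computation as $p\downarrow1$, using $x_p\to0$ (from part~(ii)) so that $\ell(p,x_p)\to 1-0+\lim p x_p^{p-1}$, which one evaluates to $2$ using the asymptotics of $x_p$ extracted from \eqref{eq:x_p eq}.

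For part~(iv), rationality of $p$ makes \eqref{eq:x_p eq} and $\tC_p=\ell(p,x_p)$ into polynomial equations over $\mathbb{Q}$ after the substitution $x=x^{1/N}\mapsto$ a common radical, so $x_p$ and $\tC_p$ are algebraic; the case $p=3/2$ is an explicit computation — set $p=3/2$ in \eqref{eq:x_p eq}, substitute $y=\sqrt{x}$, solve the resulting polynomial to get $x_{3/2}=\tfrac14(2-\sqrt2)$, and substitute back into \eqref{eq:ell} to get $\tC_{3/2}=\sqrt{1+1/\sqrt2}$. Part~(v) is the main obstacle: the four rational-function bounds $\tC_p^{\pm,i}$ and $W_p=2^{2-p}$ must each be shown to bound $\tC_p=\ell(p,x_p)$. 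The natural route is: (a) for the lower bounds, evaluate $\ell(p,\cdot)$ at a cleverly chosen explicit point — $x=\tfrac{p-1}2$ gives $\tC_p^{-,1}$ and $x=\tfrac{p-1}5$ gives $\tC_p^{-,2}$ (one checks these are exactly $\ell$ at those points, up to the stated algebra), and since these points lie in $(0,1)$ we get $\tC_p\ge\ell(p,x)$; (b) for the upper bounds, use that $\ell(p,\cdot)$ is concave-ish near its max together with a first-order Taylor bound $\ell(p,x_p)\le \ell(p,x_0)+(x_p-x_0)\,\partial_x\ell(p,x_0)$ is the wrong direction, so instead one bounds $\ell(p,x_p)\le \ell(p,x_0) + \tfrac12\sup|\partial_{xx}\ell|\,(x_p-x_0)^2$ with $x_0\in\{\tfrac{p-1}2,\tfrac{p-1}5\}$ and $|x_p-x_0|$ controlled by part~(ii), which after simplification produces the rational functions $\tC_p^{+,1}$, $\tC_p^{+,2}$; and (c) $W_p=2^{2-p}$: here I would show $\ell(p,x)\le 2^{2-p}$ directly on all of $(0,1)$, perhaps by the substitution $x=\tfrac12(1-u)$ exploiting near-symmetry and convexity of $t\mapsto t^p$, reducing to a one-variable inequality. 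Verifying each bound is "exact at the endpoints" amounts to checking the limits as $p\downarrow1$ and $p\uparrow2$ agree with $\tC_{1+}=2$, $\tC_2=1$, which is routine once the formulas are in hand. Throughout part~(v), the genuinely hard part is not the strategy but certifying the resulting explicit one- and two-variable inequalities in $p\in(1,2)$ — these are the kind of estimates that may require careful splitting of the interval or symbolic verification (as the paper itself hints with the \texttt{Mathematica} reference in Remark~\ref{rem:no mono in s}).
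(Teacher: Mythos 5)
Your high-level approach agrees with the paper for parts (i), (ii), and (iv), but there are two genuine gaps, one in part (iii) and one in part (v).

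\textbf{Part (iii): the envelope step is wrong as stated.} You propose to prove strict decrease of $\tC_p$ by showing $\partial_p\ell(p,x)<0$ for all $x\in(0,\tfrac12)$. This is false: a direct evaluation gives
\[
\ell_p(p,\tfrac12)=2^{1-p}(1-p\ln 2),
\]
which is \emph{positive} for $p\in(1,1/\ln 2)\approx(1,1.44)$. The paper avoids this trap: Lemma~\ref{lem:DLp} proves $\ell_p(p,x)<0$ only on the sub-region $x<x^*_p=\tfrac2{25}(2p-1)$, while Lemma~\ref{lem:DLx} proves $\ell_x(p,x)<0$ whenever $x\ge x^*_p$ (so that the critical point $x_p$ is forced to lie in $(0,x^*_p)$). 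The conclusion $\ell_p(p,x_p)<0$ then follows by chaining the two lemmas, not by a global sign claim on $(0,\tfrac12)$. Your envelope-theorem idea $\tfrac{d}{dp}\tC_p=\ell_p(p,x_p)$ is right, but the sign analysis must be localized to the critical point via the $x^*_p$ threshold. Relatedly, in part (i) you assert that $h(x)=(p-1)x^{p-2}-(1-x)^{p-1}-x^{p-1}$ is strictly decreasing on all of $(0,1)$; the paper's Lemma~\ref{lem:lxx} proves $\ell_{x,x}<0$ only on $(0,\tfrac12)$, which suffices once part (ii) locates $x_p$ in $(0,\tfrac12)$, but the full-interval monotonicity you invoke is not established and is not needed.

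\textbf{Part (v): wrong mechanism for the upper bounds, and the $W_p$ bound is left unsubstantiated.} You correctly identify $\tC_p^{-,1}=\ell(p,\tfrac{p-1}2)$ and $\tC_p^{-,2}=\ell(p,\tfrac{p-1}5)$ as exact evaluations at the bracket endpoints of part (ii). But the upper bounds $\tC_p^{+,1},\tC_p^{+,2}$ are \emph{not} obtained from a second-order Taylor bound with $\sup|\partial_{xx}\ell|$; that route would produce different (and uglier) expressions. The paper's device is the change of variables $z=(1-x)^2$, under which (by the concavity established inside the proof of Proposition~\ref{prop:C_f}(iv)) the function $\tl(p,z):=\ell(p,1-\sqrt z)$ is \emph{concave in $z$}. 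Setting $z_1=(\tfrac{3-p}2)^2$, $z_2=(\tfrac{6-p}5)^2$ (images of $\tfrac{p-1}2,\tfrac{p-1}5$), one then bounds $\tC_p=\max_{z\in(z_1,z_2)}\tl(p,z)$ by the tangent line at $z_1$ (increasing, since $\tl_z(p,z_1)>0$), evaluated at $z_2$, giving $\tC_p^{+,1}$; and symmetrically by the tangent at $z_2$ evaluated at $z_1$, giving $\tC_p^{+,2}$. This is a \emph{first}-order bound after the substitution, and it produces the stated rational functions exactly; your second-order plan does not. Finally, your sketch for $\ell(p,x)\le W_p=2^{2-p}$ via a symmetry substitution is purely conjectural; the paper instead proves the stronger and much harder inequality $\tC_p^{+,2}<W_p$ through a multi-stage symbolic reduction (the functions $\rho,\trho,\hat\rho,\rho_1,\trho_1,\rho_2$ and a careful sign analysis supported by \texttt{Reduce}). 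This is the technical heart of part (v), and your proposal essentially leaves it open.
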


\begin{wrapfigure}{l}{.45\textwidth}
  \vspace{-12pt}
%  \begin{center}
    \includegraphics[width=.45\textwidth]{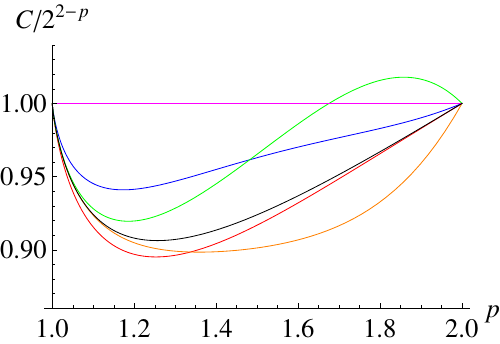}
%  \end{center}
  \caption{The ratios of $\tC_p$ (black), $\tC_p^{-,1}$ (red), $\tC_p^{-,2}$ (orange), $\tC_p^{+,1}$ (green), $\tC_p^{+,2}$ (blue), and $W_p$ (magenta) to $2^{2-p}$.}
	\label{fig:tC_p,upper-lower}
	    \vspace{-22pt}
\end{wrapfigure}
%\vspace{-20pt}

The graphs of the ratios of $\tC_p$, $\tC_p^{-,1}$, $\tC_p^{-,2}$, $\tC_p^{+,1}$, $\tC_p^{+,2}$, and $W_p$ to $W_p=2^{2-p}$ are shown in Figure~\ref{fig:tC_p,upper-lower}. 
The graph of $\tC_p$, in comparison with $W_p$ and the von Bahr--Esseen constant $C_p^\vBE$, is presented in Figure~\ref{fig:tC_p}.

% $\{(p,\tC_p)\colon1<p\le2\}$ is shown in Figure~\ref{fig:tC_p}, where $\tC_p$ is compared with $W_p$ and the corresponding constant factor in a von Bahr--Esseen bound \cite{bahr65}. 

As mentioned in Subsubsection~\ref{F_12}, the absolute-value function $|\cdot|$ is is not in the class $\F_{1,2}$. However, by \eqref{eq:psi_0+}, $|\cdot|$ is in the closure of $\F_{1,2}$ with respect to the uniform convergence on $\R$. 
It is also clear that inequality \eqref{eq:} holds for $f=|\cdot|$ 
(and any r.v.'s $X_1,\dots,X_n$) with $C=\tC_1:=1$. From this viewpoint, there is a discontinuity of $C_p$ at $p=1$, namely, $\tC_{1+}=2\ne1=\tC_1$. 
%The inequality

\subsubsection{Application: concentration inequalities for separately Lipschitz functions on product spaces}\label{concentr} 

%Another application of Theorem~\ref{eq:} and Proposition~\ref{prop:C_powers} is as follows. 

%\section{Concentration inequalities for separately Lipschitz functions}\label{concentr}

%In \cite{normal}, we gave the following definition. 
%\begin{definition}\label{def:lip}

Let $X_1,\dots,X_n$ be independent r.v.'s with values in measurable spaces $\X_1,\dots,\X_n$, respectively. Let $g\colon\W\to\R$ be a measurable function on the product space $\W:=\X_1\times\dots\times\X_n$. 
Let us say (cf.\ \cite{bent-isr,normal}) that $g$ is {\em separately Lipschitz} if it satisfies a Lipschitz type condition in each of its arguments:
\begin{equation}\label{eq:Lip}
|g(x_1,\dots,x_{i-1},\tilde x_i,x_{i+1},\dots,x_n) -
g(x_1,\dots,x_n)| \le \rho_i(\tilde x_i,x_i)
\end{equation}
for some measurable functions $\rho_i\colon\X_i\times\X_i\to\R$ and 
all $i\in\intr1n$, $(x_1,\dots,x_n)\in\W$, and $\tilde x_i\in\X_i$. 

Take now any separately Lipschitz function $g$ and let  
$$Y:=g(X_1,\dots,X_n).$$
Suppose that the r.v.\ $Y$ has a finite mean. 
Then one has the following. 

\begin{corollary}\label{cor:concentr}
For each $i\in\intr1n$, take any $x_i\in\X_i$. 
\begin{enumerate}[(I)]
	\item 
For any $f\in\F_{1,2}\setminus\{0\}$
\begin{equation}\label{eq:concentr}
	\E f(Y)\le f(\E Y)+\ka_f C_f\sum_{i=1}^n\E f\big(\rho_i(X_i,x_i)\big),    
\end{equation}
where 
\begin{gather}
	\ka_f:=\sup\Big\{\frac{U_f(c,s,0)}{U_f(c,s,a)}\colon s\in(0,\infty),\ c\in(0,\tfrac s2),\ a\in(0,c)\Big\} \in[1,2], \label{eq:ka_f} %\in[1,2], 
	\\
	U_f(c,s,a):=cf(s-c+a)+(s-c)f(a-c) \label{eq:U_f}  
\end{gather}
\big(the above definition of $\ka_f$ is valid, because $f>0$ on $\R\setminus\{0\}$ and hence $U_f(c,s,a)>0$ for any 
$s\in(0,\infty)$, $c\in(0,\tfrac s2)$, and $a\in(0,c)$\big). 
	\item For any $p\in(1,2]$
\begin{equation}\label{eq:concentr-p}
	\E|Y|^p\le|\E Y|^p+\tka_p\tC_p\sum_{i=1}^n\E\big|\rho_i(X_i,x_i)\big|^p,   
\end{equation}
where 
\begin{equation}\label{eq:tka_p}
	\tka_p:=\ka_{|\cdot|^p}=\max_{c\in[0,1/2]}\big[(c^{p - 1} + (1 - c)^{p - 1}) \big(c^{\frac1{p - 1}} + (1 - c)^{\frac1{p - 1}}\big)^{p - 1}\big]. %\in[1,2). 
\end{equation}
Moreover, $\tka_p$ continuously and strictly decreases in $p\in(1,2]$ from $2$ to $1$. 
Furthermore, the values of $\tka_p$ are algebraic for all rational $p\in(1,2]$; in particular,  % tka_p.nb
$\tka_{3/2}=\frac19\,\sqrt{51 + 21\sqrt7}=1.14\dots$, corresponding to $c=\frac16\,(3 -\sqrt{1 + 2\sqrt7})=0.081\dots$ in \eqref{eq:tka_p}. 
The graph of $\tka_p$ is shown %in Figure~\ref{fig:conc-p}
below. 
\end{enumerate}
\end{corollary}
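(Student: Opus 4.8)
The plan is to reduce \eqref{eq:concentr} to a single one-dimensional inequality via the Doob construction, and then to prove that inequality by an extremal (localization) argument in which the two-point laws built into \eqref{eq:ka_f}--\eqref{eq:U_f} come to the surface. First I would set $\G_i:=\sigma(X_1,\dots,X_i)$, $g_i(x_1,\dots,x_i):=\E\,g(x_1,\dots,x_i,X_{i+1},\dots,X_n)$ and $S_i:=g_i(X_1,\dots,X_i)$ for $i\in\intr1n$, with $S_0:=0$. Since the $X_j$ are independent, $S_i=\E(Y\,|\,\G_i)$, so $(S_j)_{j=0}^n$ is a v-martingale with $S_n=Y$ and differences $X_1^\ast:=S_1=g_1(X_1)$ and $X_j^\ast:=S_j-S_{j-1}$, $j\in\intr2n$. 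Averaging \eqref{eq:Lip} over $X_{i+1},\dots,X_n$ shows each $g_i$ is Lipschitz in its $i$th argument with modulus $\rho_i$, so that, conditionally on $\G_{j-1}$, $X_j^\ast=W_j-\E(W_j\,|\,\G_{j-1})$ with $W_j:=g_j(X_1,\dots,X_{j-1},X_j)-g_j(X_1,\dots,X_{j-1},x_j)$, $|W_j|\le\rho_j(X_j,x_j)$; and $|X_1^\ast-g_1(x_1)|\le\rho_1(X_1,x_1)$, $\E X_1^\ast=\E Y$. Theorem~\ref{th:}(I) then gives $\E f(Y)\le\E f(X_1^\ast)+C_f\sum_{j=2}^n\E f(X_j^\ast)$.

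The one-dimensional statement I would isolate is a \textbf{Key Lemma}: \emph{for every real-valued r.v.\ $\xi$ and every $\gamma\in\R$, $\E f(\xi)\le f(\E\xi)+\ka_f\,\E f(\xi-\gamma)$, with $\ka_f$ of \eqref{eq:ka_f} the least admissible constant, and $1\le\ka_f\le2$} (if $\E f(\xi)=\infty$ this is read as forcing $\E f(\xi-\gamma)=\infty$, which holds because $f(u+v)\le2f(u)+2f(v)$ for $f\in\F_{1,2}$). Granting it: $f$ being even and nondecreasing on $[0,\infty)$, $|\xi-\gamma|\le R$ a.s.\ gives $\E f(\xi-\gamma)\le\E f(R)$, hence $\E f(\xi)\le f(\E\xi)+\ka_f\,\E f(R)$. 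Applying this conditionally on $\G_{j-1}$ with $\xi=X_j^\ast$ (so $\E(\xi\,|\,\G_{j-1})=0$), $\gamma=-\E(W_j\,|\,\G_{j-1})$, $R=\rho_j(X_j,x_j)$, then taking expectations, gives $\E f(X_j^\ast)\le\ka_f\,\E f\big(\rho_j(X_j,x_j)\big)$ for $j\in\intr2n$; taking $\xi=X_1^\ast$, $\gamma=g_1(x_1)$, $R=\rho_1(X_1,x_1)$ gives $\E f(X_1^\ast)\le f(\E Y)+\ka_f\,\E f\big(\rho_1(X_1,x_1)\big)$. Combining with the previous display and $C_f\ge1$ (Theorem~\ref{th:}(III)) yields \eqref{eq:concentr}.

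For the Key Lemma, sharpness is a substitution: given $0<a<c<s/2$, let $\zeta$ be the two-point r.v.\ with $\P(\zeta=s-c)=c/s$ and $\P(\zeta=-c)=1-c/s$, so $\E\zeta=0$ and, by evenness of $f$, $\E f(\zeta)=\tfrac1s U_f(c,s,0)$ and $\E f(\zeta+a)=\tfrac1s U_f(c,s,a)$; with $\xi=\zeta$ and $\gamma=-a$ the ratio $\big(\E f(\xi)-f(\E\xi)\big)/\E f(\xi-\gamma)$ equals $U_f(c,s,0)/U_f(c,s,a)$, so no constant below $\ka_f$ works, while $a\downarrow0$ gives $\ka_f\ge1$ (the bound $\ka_f\le2$ I would extract from the optimization below, or from $f(c)\le\frac cs f(s)$ and related convexity estimates). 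For the inequality direction, fix $\xi,\gamma$ and put $m:=\E\xi$: since $\nu\mapsto\int f\,\dd\nu$ and $\nu\mapsto\int f(\cdot-\gamma)\,\dd\nu$ are affine on the convex set of laws with barycenter $m$ (the latter positive), the sets $\{\nu:\int f\,\dd\nu-f(m)\ge\lambda\int f(\cdot-\gamma)\,\dd\nu\}$ are half-spaces, and since the relevant $\lambda$ satisfy $\lambda\ge1$ (so $f-\lambda f(\cdot-\gamma)$ is bounded above) a min--max/localization argument reduces the bound $\big(\E f(\xi)-f(m)\big)/\E f(\xi-\gamma)\le\ka_f$ to two-point $\xi$. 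For two-point $\xi$ one then optimizes over the two atoms, over $m$, and over $\gamma$, using only that $f\in\F_{1,2}$ (even, $f'$ nondecreasing and concave on $[0,\infty)$) to see that the supremum is attained where $\xi$ is a translate of such a $\zeta$, i.e.\ at some $U_f(c,s,a)$ with $0<a<c<s/2$, giving exactly $\ka_f$. This last finite-dimensional optimization — verifying that the extremal two-point configuration has precisely the shape of \eqref{eq:U_f} and not some other shape — is the step I expect to be the main obstacle; the rest is bookkeeping.

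Part (II) is specialization. With $f=|\cdot|^p$, $p\in(1,2]$, one has $C_f=\tC_p$ and $\ka_f=\tka_p$, so \eqref{eq:concentr-p} is \eqref{eq:concentr}; and $\tka_2=\ka_{\psi_\infty}=1$ since $|\cdot|^2=\psi_\infty$. For $p<2$, homogeneity of $|\cdot|^p$ lets one set $s=1$ in \eqref{eq:ka_f}; minimizing $U_{|\cdot|^p}(c,1,a)$ over $a\in(0,c)$ via its critical-point relation $(1-c)(c-a)^{p-1}=c(1-c+a)^{p-1}$ (equivalently $c-a=t/(1+t)$ with $t:=(c/(1-c))^{1/(p-1)}$) and substituting back collapses $U_{|\cdot|^p}(c,1,0)$ over the minimum to $\big(c^{p-1}+(1-c)^{p-1}\big)\big(c^{1/(p-1)}+(1-c)^{1/(p-1)}\big)^{p-1}$, which is \eqref{eq:tka_p}. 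That $\tka_p$ decreases continuously and strictly from $2$ (at $p=1+$, obtained as $c\downarrow0$) to $1$ (at $p=2$, where the maximand is identically $1$) follows by examining this maximand's monotonicity in $p$; it is algebraic for rational $p$ because both the maximand and the maximizing $c$ — a root of an algebraic equation with rational-in-$p$ coefficients — are algebraic, and for $p=\tfrac32$ that equation is a quadratic in $c$ with root $c=\tfrac16\big(3-\sqrt{1+2\sqrt7}\big)$, whence $\tka_{3/2}=\tfrac19\sqrt{51+21\sqrt7}$.
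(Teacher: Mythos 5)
Your plan is close in spirit to the paper's, and Part (II) (homogeneity, critical point $(1-c)(c-a)^{p-1}=c(1-c+a)^{p-1}$, algebraicity, the $p=\tfrac32$ value) matches the paper's argument essentially verbatim. But there are two substantive issues with Part (I).

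First, your choice of v-martingale creates an avoidable complication. You apply Theorem~\ref{th:} to $(S_j)_{j=1}^n$ with $S_j=\E_jY$, so the first difference is $X_1^*=g_1(X_1)$, a non-degenerate r.v.\ with $\E X_1^*=\E Y$. To control $\E f(X_1^*)$ you then need a version of Proposition~\ref{lem:centring} for \emph{non-zero-mean} $\xi$ — your ``Key Lemma'' $\E f(\xi)\le f(\E\xi)+\ka_f\,\E f(\xi-\gamma)$. That statement is strictly stronger than Proposition~\ref{lem:centring}, which is the only thing the paper proves (and the only thing the paper needs). The paper instead applies Theorem~\ref{th:} to the v-martingale whose first term is the \emph{constant} $\E Y$ (differences $\E Y,\xi_1,\dots,\xi_n$, a sequence of length $n+1$), yielding $\E f(Y)\le f(\E Y)+C_f\sum_{i=1}^n\E f(\xi_i)$ directly; then each $\xi_i=\eta_i-\E_{i-1}\eta_i$ is conditionally zero-mean, and Proposition~\ref{lem:centring} applied conditionally gives $\E f(\xi_i)\le\ka_f\E f(\eta_i)\le\ka_f\E f(\rho_i(X_i,x_i))$. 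No non-zero-mean extension is needed. If you retain your setup, you should at least note that the weaker bound $\E f(X_1^*)\le f(\E Y)+C_f\ka_f\E f(\rho_1)$ — which \emph{does} follow from Theorem~\ref{th:} applied to the two-term v-martingale $(\E Y,X_1^*)$ plus Proposition~\ref{lem:centring} — already suffices, since it is absorbed into the $\ka_fC_f$ factor anyway; as written you are claiming, without proof, the sharper constant $\ka_f$ in a setting the paper never considers.

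Second, even for the zero-mean case (which is Proposition~\ref{lem:centring}), your reduction from two-point laws to the specific shape \eqref{eq:U_f} — showing that optimizing over the atom locations and over $\gamma$ lands exactly at $U_f(c,s,a)$ with $a\in(0,c)$ and $c\in(0,s/2)$ — is exactly the content of the paper's Proposition~\ref{prop:minU}, which shows $U_f(c,s,\cdot)$ is convex with a unique minimizer in $[0,c)$, and of the symmetry $R_f(c,s,a)=R_f(s-c,s,-a)$; this is nontrivial (uniqueness fails for strict convexity, and the paper's proof exploits the measure representation \eqref{eq:f}). You explicitly flag this as ``the main obstacle,'' which is accurate, but it means the proposal as stated has a genuine gap precisely there. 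Likewise, the bound $\ka_f\le2$ in \eqref{eq:ka_f} is asserted (``I would extract from the optimization below, or from $f(c)\le\frac cs f(s)$'') rather than established; the paper proves it by reducing to $f=\psi_t$, $s=1$, $a=a_{\psi_t;c,1}$ via \eqref{eq:f} and \eqref{eq:a_t,c,s} and then verifying the resulting algebraic inequality.
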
 

\vspace*{-0pt}

\parbox{.45\textwidth}
{
%\begin{figure}%{l}{.4\textwidth}
    \includegraphics[width=.4\textwidth]{%C:/Users/Iosif/Documents/mtu_home01-30-10/bar-esseen/paper/
		concentr-powers.%eps
		pdf} \\ 
%  \caption{\ \ 
\ \centering {\small $\tka_p$, solid; \quad $1$, dotted.}
%}
%%\vspace*{-12pt}	
%	\label{fig:conc-p}
%\end{figure}	
}
\hspace*{.02\textwidth}
\parbox{.48\textwidth}
{
One can observe some similarity between $C_f,\tC_p$ and $\ka_f,\tka_p$. 
Thus, going from the ``one-dimensional'' inequality \eqref{eq:} or \eqref{eq:powers} for v-martingales to the ``multi-dimensional'' measure concentration inequality \eqref{eq:concentr} or \eqref{eq:concentr-p} entails an extra factor, $\ka_f$ or $\tka_p$, whose values are between $1$ and $2$. 
}

%\begin{wrapfigure}{l}{.3\textwidth}
%\vspace*{-8pt}	
%%  \begin{center}
%    \includegraphics[width=.3\textwidth]{%C:/Users/Iosif/Documents/mtu_home01-30-10/bar-esseen/paper/
%		}
%%  \end{center}
%  \caption{\ \ $\tka_p$, solid; \quad $1$, dotted.}
%\vspace*{-12pt}	
%	\label{fig:conc-p}
%\end{wrapfigure}
%%
%\parbox[top]{.6\textwidth}
%{
%One can observe some similarity between $C_f,\tC_p$ and $\ka_f,\tka_p$. 
%%
%Thus, going from the ``one-dimensional'' inequality \eqref{eq:} or \eqref{eq:powers} for v-martingales to the ``multi-dimensional'' measure concentration inequality \eqref{eq:concentr} or \eqref{eq:concentr-p} entails an extra factor, $\ka_f$ or $\tka_p$, whose values are between $1$ and $2$. 
%}

The proof of Corollary~\ref{cor:concentr} is partly based on the following proposition, which may be of independent interest. \\ 

\begin{proposition}\label{lem:centring}
For any zero-mean r.v.\ $X$, $f\in\F_{1,2}\setminus\{0\}$, and $a\in\R$
\begin{equation}\label{eq:centring}
	\E f(X)\le\ka\E f(X+a)
\end{equation}
with $\ka=\ka_f$, and $\ka_f$ is the best possible constant $\ka$ in \eqref{eq:centring}. 
\end{proposition}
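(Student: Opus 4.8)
The plan is to prove Proposition~\ref{lem:centring} by reducing it, via the integral representation of Proposition~\ref{prop:F12}, to the extreme functions $\psi_t$, and then to an elementary one-variable optimization. First I would observe that if \eqref{eq:centring} holds with the \emph{same} constant $\ka$ for every extreme function $\psi_t$, $t\in(0,\infty]$, then by \eqref{eq:f} (integrating in $\ga=\ga_f$) it holds for every $f\in\F_{1,2}$; conversely, testing against $f=\psi_t$ shows that $\ka_f$ cannot be improved below $\sup_t\ka_{\psi_t}$. So the first task is to identify $\ka_{\psi_t}$. For $\psi_\infty(x)=x^2$ we simply have $\E(X+a)^2=\E X^2+a^2\ge\E X^2$, giving $\ka_{\psi_\infty}=1$. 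For finite $t$, since $\psi_t$ is $t$-scale-homogeneous in an appropriate sense, it suffices to treat $t=1$; here I expect $\ka_{\psi_1}=2$ (consistent with $\tka_p\to2$ as $p\to1+$ and with $\psi_t$ behaving like $2t|\cdot|$ far out), so the supremum over $t$ is governed by the finite-$t$ functions and equals a value in $[1,2]$ depending on $f$.

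The heart of the argument, however, is to show that for a \emph{general} $f\in\F_{1,2}$ the best constant equals the explicit quantity in \eqref{eq:ka_f}. For the upper bound I would fix the value $a$ and analyze $\E f(X+a)-\ka^{-1}\E f(X)$ as a functional of the distribution of the zero-mean r.v.\ $X$; by a standard extreme-point / Lagrangian argument (as in Hoeffding-type problems, cf.\ the references cited in Subsubsection~\ref{F_12}), the worst case is attained when $X$ is supported on (at most) two points, say with $\P(X=-c)=\frac{s-c}{s}$ and $\P(X=s-c)=\frac{c}{s}$ for suitable $s>0$ and $c\in(0,s)$ enforcing $\E X=0$; evaluating $\E f(X)=U_f(c,s,0)$ and $\E f(X+a)=U_f(c,s,a)$ with $U_f$ as in \eqref{eq:U_f} then gives exactly the ratio in \eqref{eq:ka_f}. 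One checks that the relevant range of parameters is $c\in(0,\tfrac s2)$ and $a\in(0,c)$ (using the evenness of $f$ and the fact that shifting $a$ toward $0$ or beyond $c$ only helps), which is why the supremum in \eqref{eq:ka_f} is taken over that region. For the lower bound (sharpness) one simply plugs these two-point distributions back in.

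For the power case, part~(II), I would specialize: with $f=|\cdot|^p$ the homogeneity lets me set $s=1$, reducing \eqref{eq:ka_f} to a maximization over $c\in(0,\tfrac12)$ and $a\in(0,c)$; optimizing first in $a$ (the inner optimization is a smooth one-variable problem whose stationarity condition can be solved in closed form, producing the term $\big(c^{1/(p-1)}+(1-c)^{1/(p-1)}\big)^{p-1}$) yields the single-variable formula \eqref{eq:tka_p}. Monotonicity of $\tka_p$ in $p$, analyticity, the algebraicity at rational $p$, and the explicit value $\tka_{3/2}$ then follow by the same kind of analysis used for $\tC_p$ in Proposition~\ref{prop:C_powers} (differentiating the maximand, or invoking that a maximum of an analytic family over a compact set with a nondegenerate interior critical point is analytic).

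The main obstacle I anticipate is making the two-point reduction fully rigorous: one must justify that the supremum of $\E f(X+a)/\E f(X)$ over all zero-mean $X$ (with possibly infinite second moment, and with both sides possibly infinite) is attained in the limit by two-point distributions, and that no mass "escaping to infinity" can beat them. The cleanest route is again through the representation \eqref{eq:f}: prove the two-point extremality for each $\psi_t$ separately — where it is an elementary finite computation — and only then integrate in $\ga$, so that the delicate extreme-point argument is carried out for the explicit, piecewise-quadratic functions $\psi_t$ rather than for an arbitrary $f$. A secondary technical point is verifying that the constraints $c\in(0,\tfrac s2)$ and $a\in(0,c)$ genuinely capture the worst case and that the boundary cases ($c\to0$, $c\to\tfrac s2$, $a\to 0$, $a\to c$) do not yield a larger value; this is routine but must be stated.
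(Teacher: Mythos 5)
Your middle paragraph — reducing to two-point zero-mean distributions, writing the ratio as $U_f(c,s,0)/U_f(c,s,a)$, and shrinking the parameter range to $c\in(0,\tfrac s2)$, $a\in(0,c)$ — is exactly the paper's strategy, and the identification of $\E f(X)=\tfrac1s U_f(c,s,0)$, $\E f(X+a)=\tfrac1s U_f(c,s,a)$ is correct. However, your opening and closing paragraphs propose reducing to the extreme functions $\psi_t$, and that route genuinely fails here. The inequality $\E f(X)\le\ka\,\E f(X+a)$ is linear in $f$, so knowing $\E\psi_t(X)\le\ka_{\psi_t}\E\psi_t(X+a)$ with $\ka_{\psi_t}=2$ for all finite $t$ and integrating against $\ga_f$ only gives $\E f(X)\le 2\,\E f(X+a)$, not the sharp $\ka_f$ (indeed $\tka_p\in(1,2)$ strictly for $p\in(1,2)$). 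Likewise, proving two-point extremality for each $\psi_t$ separately and "integrating in $\ga$" cannot yield two-point extremality for $f$: the extremal two-point law depends on $t$, and extremality of a distribution is not a property one can average. The two-point reduction is a purely measure-theoretic statement about zero-mean laws, independent of $f$; the paper invokes a disintegration result (any zero-mean law is a mixture of zero-mean two-point laws), after which the ratio bound follows from the elementary inequality $\int a_\theta\,\pi(\d\theta)\big/\int b_\theta\,\pi(\d\theta)\le\sup_\theta(a_\theta/b_\theta)$ for positive $b_\theta$. Your "standard extreme-point / Lagrangian argument" needs to be replaced by something of that concreteness, and the $\psi_t$-based workaround is not a valid substitute.

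A second, smaller gap: the reduction from $a\in\R$ to $a\in(0,c)$ is not just "shifting $a$ toward $0$ or beyond $c$ only helps." What is needed is precisely Proposition~\ref{prop:minU}: that $U_f(c,s,\cdot)$ is convex and has a \emph{unique} minimizer lying in $[0,c)$. Convexity is immediate, but uniqueness of the minimizer is delicate (since $U_f(c,s,\cdot)$ is not strictly convex in general) and nonnegativity of the minimizer requires its own short argument, which the paper reduces by linearity and homogeneity to the case $f=\psi_t$; the remaining halving of the $c$-range uses the symmetry $R_f(c,s,a)=R_f(s-c,s,-a)$. Without citing Proposition~\ref{prop:minU} (and that symmetry), the restriction of the supremum in \eqref{eq:ka_f} is unjustified. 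Finally, note that your third paragraph concerns the formula and monotonicity of $\tka_p$, which belong to part~(II) of Corollary~\ref{cor:concentr}, not to the present proposition.
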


In turn, the proof of Proposition~\ref{lem:centring} uses 

\begin{proposition}\label{prop:minU}
Take any $f\in\F_{1,2}\setminus\{0\}$, $s\in(0,\infty)$, and $c\in(0,\tfrac s2)$. Then $U_f(c,s,a)$ (defined in \eqref{eq:U_f}) is convex in $a\in\R$. Moreover, $U_f(c,s,a)$ attains its minimum over all $a\in\R$ at a unique point $a_{f;c,s}\in[0,c)$. In particular, for all $t\in(0,\infty)$, $s\in(0,\infty)$, and $c\in(0,\tfrac s2)$
\begin{equation}\label{eq:a_t,c,s}
	a_{\psi_t;c,s}=\tfrac c{s-c}\,(s-c-t)_+
\end{equation}
and $\ka_{\psi_t}=2$. 
\end{proposition}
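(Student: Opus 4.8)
The plan is to use the integral representation from Proposition~\ref{prop:F12} to reduce everything to the extreme functions $\psi_t$, and then to compute the $\psi_t$ case explicitly. First I would establish convexity of $a\mapsto U_f(c,s,a)$. Since $f$ is convex (as noted after the definition of $\F_{1,2}$), each of the two summands $cf(s-c+a)$ and $(s-c)f(a-c)$ is a nonnegative multiple of a convex function of $a$, so $U_f(c,s,\cdot)$ is convex; this part is immediate and needs no representation. What requires more care is the claim that the minimum is attained \emph{uniquely} and in the sub-interval $[0,c)$. For uniqueness it suffices to show $U_f(c,s,\cdot)$ is not affine on any interval, which for a function in $\F_{1,2}\setminus\{0\}$ should follow from $f''$ being positive on a neighborhood of $0$ for at least the $\psi_t$-pieces with finite $t$; I would check the right derivative $\partial_a^+ U_f(c,s,a)$ is strictly increasing.

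Next, to locate the minimizer, I would examine the one-sided derivatives of $U_f(c,s,\cdot)$ at the endpoints $a=0$ and $a=c$. At $a=c$ the second term $(s-c)f(a-c)$ has a critical point (since $f'(0)=0$), and the first term contributes $cf'(s)>0$ (as $s>0$), so $\partial_a U_f(c,s,c)=cf'(s)>0$; hence the minimizer lies strictly to the left of $c$. At $a=0$ I would compute $\partial_a^+U_f(c,s,0)=cf'(s-c)-(s-c)f'(c)$ and show this is $\le 0$ using the representation \eqref{eq:f'}: since $f'(x)=2\int_{(0,\infty]}(x\wedge t)\,\ga(\d t)$, the sign of $cf'(s-c)-(s-c)f'(c)$ reduces to the sign of $c(x\wedge(s-c))-(s-c)(x\wedge c)$ integrated against $\ga$, and because $c<s-c<s$ one checks pointwise in $t$ that $c((s-c)\wedge t)\le (s-c)(c\wedge t)$ for every $t\in(0,\infty]$ (split into the cases $t\le c$, $c<t\le s-c$, $t>s-c$). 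This gives $\partial_a^+U_f(c,s,0)\le 0$, so by convexity the unique minimizer $a_{f;c,s}$ lies in $[0,c)$.

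For the explicit $\psi_t$ computation, I would simply substitute $f=\psi_t$ with $\psi_t(x)=x^2-(|x|-t)_+^2$ into \eqref{eq:U_f}, differentiate in $a$ on the relevant intervals, and solve; the piecewise-quadratic nature of $\psi_t$ makes the minimizer a rational expression in $c,s,t$, and matching it against the sign analysis above should yield $a_{\psi_t;c,s}=\frac c{s-c}(s-c-t)_+$. Finally, for $\ka_{\psi_t}=2$: plugging $a=0$ gives $U_{\psi_t}(c,s,0)=c\,\psi_t(s-c)+(s-c)\psi_t(c)$ in the denominator-free form, and one evaluates the ratio $U_{\psi_t}(c,s,0)/U_{\psi_t}(c,s,a_{\psi_t;c,s})$; by the homogeneity $\psi_t(\la x)=\la^2\psi_{t/\la}(x)$ noted in Remark~\ref{rem:nonlin} one can rescale $s=1$, and then the supremum over $c\in(0,\tfrac12)$ and $t\in(0,\infty)$ is computed directly, the value $2$ being approached (as in the proof of Proposition~\ref{prop:C_psi}) in a degenerate limit such as $c\downarrow 0$ or $t\downarrow 0$. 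The main obstacle I anticipate is not any single computation but handling the two regimes cleanly: the sign argument at $a=0$ via the measure $\ga$ must be done uniformly over all $f\in\F_{1,2}\setminus\{0\}$ (including the pure-$\psi_\infty$ and mixed cases), and the explicit $\psi_t$ minimization has to be reconciled with the $(s-c-t)_+$ truncation, i.e.\ the case $t\ge s-c$ where the minimizer degenerates to $0$ must be treated separately and shown consistent with the general $[0,c)$ statement.
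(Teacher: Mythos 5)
Your overall architecture — convexity of $U_f$ from convexity of $f$; locating the minimizer by examining the derivative at $a=0$ and $a=c$; reducing the sign check at $a=0$ to a pointwise inequality in $t$ via the representation \eqref{eq:f'}; explicit computation for $\psi_t$; $\ka_{\psi_t}=2$ via a degenerate limit of the ratio — is essentially the same as the paper's. In particular your pointwise verification $c\big((s-c)\wedge t\big)\le(s-c)(c\wedge t)$ (case-splitting at $t\le c$, $c<t\le s-c$, $t>s-c$) and the observation $\partial_a U_f(c,s,a)\big|_{a=c}=cf'(s)>0$ are both correct and are exactly what one needs for $a_{f;c,s}\in[0,c)$.

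The genuine gap is in the uniqueness step. You propose to show that $U_f(c,s,\cdot)$ ``is not affine on any interval'' and that $\partial_a^+U_f(c,s,a)$ ``is strictly increasing.'' Neither holds in general. For $f=\psi_t$ with $t\in(0,\infty)$ one has $f''\equiv 0$ on $[t,\infty)$, so as soon as $s>2t$ there is a nonempty interval of $a$ on which $|s-c+a|\ge t$ and $|a-c|\ge t$ simultaneously, and there $U_{\psi_t}(c,s,\cdot)$ is literally affine. The same happens for any $f$ whose $\ga_f$ is supported in a bounded subinterval of $(0,\infty)$. The paper flags precisely this: ``A difficulty in proving the uniqueness of the minimizer of $U_f(c,s,a)$ in $a$ in Proposition~\ref{prop:minU} is that, in general, $U_f(c,s,a)$ is not strictly convex in $a$.'' Nor does ``$f''>0$ near $0$'' help directly: that gives strict convexity of $U_f$ only for $a$ near $c$, whereas the minimizer can be anywhere in $[0,c)$ (e.g.\ $a_{f;c,s}=0$ when $\supp\ga_f\subset[s-c,\infty]$), and the relevant arguments of $f''$ need not be small. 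What one actually needs — and what the paper proves — is that $U_f(c,s,\cdot)$ cannot be affine on a neighborhood of a minimizer: if the set of minimizers contained an interval $(a_1,a_2)$, then on that interval both the first-order condition $cf'(s-c+a)=(s-c)f'(c-a)$ and the second-order identity $cf''(s-c+a)+(s-c)f''(a-c)=0$ hold; nonnegativity of $f''$ then forces $f''(c-a)\equiv 0$ on $(a_1,a_2)$, hence $f''\equiv 0$ on $(c-a_2,\infty)$ and $f'$ constant there; but the first-order condition together with $0<c<s-c$ and $f'$ nondecreasing and positive on $(0,\infty)$ gives $f'(s-c+a_*)>f'(c-a_*)$, hence $s-c+a>c-a>c-a_2$ for $a\in(a_1,a_2)$, placing both arguments where $f'$ is constant — contradiction.

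One minor imprecision: the limit realizing $\ka_{\psi_t}=2$ after rescaling $s=1$ is a joint degeneration ($t/c\downarrow 0$ and $c\downarrow 0$ in that order, which is the $s=1$ version of the paper's $s\to\infty$ then $c\to\infty$); sending $c\downarrow 0$ with $t$ fixed actually drives the ratio to $1$, not $2$, so ``$c\downarrow 0$ or $t\downarrow 0$'' as stated is not quite right. Also, concluding $\ka_{\psi_t}=2$ (rather than $\ge 2$) still requires the companion bound $\ka_f\le 2$, which the paper establishes separately in the proof of Corollary~\ref{cor:concentr}.
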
 

On the other hand, Proposition~\ref{prop:minU} obviously complements Corollary~\ref{cor:concentr}. 

A difficulty in proving the uniqueness of the minimizer of $U_f(c,s,a)$ in $a$ in 
Proposition~\ref{prop:minU} is that, in general, $U_f(c,s,a)$ is not strictly convex in $a$.  

%The graph of $\ka_p$ is shown in Figure~\ref{conc-p}. 

An example of separately Lipschitz functions $g:\X^n\to\R$ is given by the formula 
$g(x_1,\dots,x_n)=\|x_1+\dots+x_n\|$ 
for all $x_1,\dots,x_n$ in a separable Banach space $(\X,\|\cdot\|)$. 
In this case, one may take $\rho_i(\tilde x_i,x_i)\equiv\|\tilde x_i-x_i\|$. 
Thus, one obtains 

\begin{corollary}\label{cor:conc-sums}
Let $X_1,\dots,X_n$ be independent random vectors in the Banach space $(\X,\|\cdot\|)$. 
Let $S_n:=X_1+\dots+X_n$. 
For each $i\in\intr1n$, take any $x_i\in\X$. 
Then 
for any $f\in\F_{1,2}\setminus\{0\}$
\begin{equation}\label{eq:conc-sums}
	\E f(\|S_n\|)\le f(\E\|S_n\|)+\ka_f C_f\sum_{i=1}^n\E f\big(\|X_i-x_i\|\big).    
\end{equation}
Moreover, for any $p\in(1,2]$
\begin{equation}\label{eq:conc-p-sums}
	\E\|S_n\|^p\le(\E\|S_n\|)^p+\ka_p\tC_p\sum_{i=1}^n\E\|X_i-x_i\|^p.  
\end{equation}
\end{corollary}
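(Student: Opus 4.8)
The plan is to derive Corollary~\ref{cor:conc-sums} as a direct specialization of Corollary~\ref{cor:concentr}. First I would verify that the function $g\colon\X^n\to\R$ defined by $g(x_1,\dots,x_n):=\|x_1+\dots+x_n\|$ is separately Lipschitz in the sense of \eqref{eq:Lip}. Indeed, for any $i\in\intr1n$, any $(x_1,\dots,x_n)\in\X^n$, and any $\tilde x_i\in\X$, the triangle inequality (in the form of the reverse triangle inequality $|\|u\|-\|v\||\le\|u-v\|$) gives
\begin{equation*}
	\big|\,\|x_1+\dots+x_{i-1}+\tilde x_i+x_{i+1}+\dots+x_n\|-\|x_1+\dots+x_n\|\,\big|\le\|\tilde x_i-x_i\|,
\end{equation*}
so that \eqref{eq:Lip} holds with $\rho_i(\tilde x_i,x_i):=\|\tilde x_i-x_i\|$, which is clearly a measurable function of $(\tilde x_i,x_i)$ because $\|\cdot\|$ is continuous and $\X$ is separable (hence the relevant Borel structures behave well). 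Note also that the r.v.\ $Y:=g(X_1,\dots,X_n)=\|S_n\|$ has a finite mean: this is implied whenever the right-hand sides of \eqref{eq:conc-sums} or \eqref{eq:conc-p-sums} are finite (otherwise the inequalities are trivial), since by the triangle inequality $\|S_n\|\le\sum_i\|X_i-x_i\|+\|x_1+\dots+x_n\|$ and $f$ is convex and increasing on $[0,\infty)$; alternatively one may simply add the finiteness of $\E\|S_n\|$ as a standing hypothesis, as is done just before Corollary~\ref{cor:concentr}.

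Next I would apply Corollary~\ref{cor:concentr} with $\X_i:=\X$ for all $i$, with this particular $g$, and with $\rho_i(\tilde x_i,x_i)=\|\tilde x_i-x_i\|$. Part (I) of Corollary~\ref{cor:concentr} then yields, for any $f\in\F_{1,2}\setminus\{0\}$,
\begin{equation*}
	\E f(\|S_n\|)=\E f(Y)\le f(\E Y)+\ka_f C_f\sum_{i=1}^n\E f\big(\rho_i(X_i,x_i)\big)=f(\E\|S_n\|)+\ka_f C_f\sum_{i=1}^n\E f\big(\|X_i-x_i\|\big),
\end{equation*}
which is exactly \eqref{eq:conc-sums}. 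Likewise, Part (II) of Corollary~\ref{cor:concentr}, applied to $f=|\cdot|^p$ with $p\in(1,2]$, gives
\begin{equation*}
	\E\|S_n\|^p\le|\E\|S_n\|\,|^p+\tka_p\tC_p\sum_{i=1}^n\E\big|\|X_i-x_i\|\big|^p=(\E\|S_n\|)^p+\tka_p\tC_p\sum_{i=1}^n\E\|X_i-x_i\|^p,
\end{equation*}
using that $\|S_n\|\ge0$ and $\|X_i-x_i\|\ge0$ to drop the absolute values; this is \eqref{eq:conc-p-sums}, modulo the harmless notational point that the paper writes $\ka_p$ there where it presumably means $\tka_p=\ka_{|\cdot|^p}$ (consistently with \eqref{eq:tka_p}).

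Since the only real content is the verification that $g=\|\cdot\|\circ(\text{sum})$ is separately Lipschitz with the stated $\rho_i$, this proof is essentially immediate. The one point that deserves a word of care — and thus the closest thing to an ``obstacle'' — is the measurability bookkeeping: one needs $S_n=X_1+\dots+X_n$ to be a well-defined $\X$-valued random element and $\|S_n\|$ to be a genuine r.v., which is why the separability of $\X$ is invoked (ensuring that addition $\X\times\X\to\X$ and the norm $\X\to\R$ are Borel measurable, and that finite sums of independent $\X$-valued random elements are again random elements). Beyond that, everything follows by quoting Corollary~\ref{cor:concentr}; no new estimates on the constants are required, as $\ka_f,C_f,\tka_p,\tC_p$ are precisely the constants already analyzed in Corollary~\ref{cor:concentr} and Propositions~\ref{prop:C_powers} and the centring results.
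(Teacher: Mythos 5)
Your proof is correct and is essentially the paper's own argument: the paper also obtains Corollary~\ref{cor:conc-sums} by observing that $g(x_1,\dots,x_n)=\|x_1+\dots+x_n\|$ is separately Lipschitz with $\rho_i(\tilde x_i,x_i)=\|\tilde x_i-x_i\|$ and then citing Corollary~\ref{cor:concentr}. You have also correctly flagged the typographical slip $\ka_p$ for $\tka_p$ in \eqref{eq:conc-p-sums}.
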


For $p=2$, inequality \eqref{eq:conc-p-sums} was obtained in \cite[Theorem~4]{pin-sakh}, based on an improvement the method of Yurinski\u\i (1974) \cite{yurinskii}; cf.\ \cite{mcdiarmid89,mcdiarmid98,bent-isr}, \cite[Section~4]{normal}, and \cite[Proposition~2.5]{pin94}. The proof of Corollary~\ref{cor:concentr} 
%and \ref{cor:conc-sums} 
is based in part on the same kind of improvement.

As can be seen from that proof, both Corollaries~\ref{cor:concentr} and \ref{cor:conc-sums} will hold even if 
the separately-Lipschitz condition \eqref{eq:Lip} 
is relaxed to 
\begin{equation}\label{eq:LipE}
	|\E g(x_1,\dots,x_{i-1},\tilde x_i,X_{i+1},\dots,X_n) 
%\\ 
- 
\E g(x_1,\dots,x_i,X_{i+1},\dots,X_n)|\le \rho_i(\tilde x_i,x_i). 
\end{equation}

Note also that in Corollaries~\ref{cor:concentr} and \ref{cor:conc-sums} the r.v.'s $X_i$ do not have to be zero-mean, or even to have any definable mean; at that, the arbitrarily chosen $x_i$'s may act as the centers, in some sense, of the distributions of the corresponding $X_i$'s.  

Clearly, the separate-Lipschitz (sep-Lip) condition \eqref{eq:Lip} is easier to check than a joint-Lipschitz one. Also, sep-Lip (especially in the relaxed form \eqref{eq:LipE}) is more generally applicable. On the other hand, when a joint-Lipschitz condition is satisfied, one can generally obtain better bounds. Literature on the concentration of measure phenomenon, almost all of it for joint-Lipschitz settings, is vast; let us mention here only \cite{ledoux-tala,ledoux_book,lat-olesz,bouch-etal,ledoux-olesz}.  

\subsubsection{Other corollaries of Theorem~\ref{th:} and comparisons with known results}\label{cor's,relations} 
Take any $p\in(1,2]$. 
A normed space $(\X,\|\cdot\|)$ (or, briefly, $\X$) is called $p$-uniformly smooth \cite{BCL} if for some constant $D\in(0,\infty)$ (referred to as a $p$-uniform smoothness constant of $\X$) and all $x$ and $y$ in $\X$ one has $\tfrac12\,(\|x+y\|^p+\|x-y\|^p)\le\|x\|^p+D^p\|y\|^p$ or, equivalently, 
\begin{equation}\label{eq:p-smooth}
	\E\|x+X y\|^p\le \|x\|^p+D^p\E|X|^p\|y\|^p 
\end{equation}
for all symmetric(ally distributed) real-valued r.v.\ $X$. 
%\ (recall that any symmetric distribution is a mixture of  
%Rademacher r.v., so that $\P(\vp=\pm1)=\frac12$. 
If $\X$ is $p$-uniformly smooth with a $p$-uniform smoothness constant $D$, 
let us say that $\X$ is $(p,D)$-uniformly smooth or, simply, $(p,D)$-smooth. 
For instance, for any $q\in[2,\infty)$ the space $L^q(\mu)$ is $(2,D)$-smooth with $D=\sqrt{q-1}$, which is the best possible constant of the $2$-uniform smoothness as long as the space $L^q(\mu)$ is at least two-dimensional --- see \cite[Proposition~2.1]{pin94}, \cite[Proposition~3]{BCL}, \cite[Corollary~2.8]{wellner_nemir}. 

Dual to the notion of $(p,D)$-uniform smoothness is that of $(q,D^{-1})$-uniform convexity, whose definition can be obtained by reversing the inequality sign in \eqref{eq:p-smooth} and replacing there $p$ and $D$ by $q$ and $D^{-1}$, respectively; here, $\frac1p+\frac1q=1$. 
In particular, a result due to Ball, Carlen, and Lieb 
\cite[Lemma~5]{BCL} is that $\X$ is $(p,D)$-uniformly smooth iff its dual $\X^*$ is $(q,D^{-1})$-uniformly convex; cf.\ e.g.\  \cite{day44,lindenstr63}. 
Note that $q$-uniform convexity and $p$-uniform smoothness are refinements of
the notions of uniform convexity and uniform smoothness, which go back to Clarkson \cite{clarkson} and Day \cite{day44}; cf.\ \cite{hoff-jorg,woycz75}. These notions  
are important in functional analysis. 
In particular, Pisier \cite{pisier75} showed that every super-reflexive space is $q$-uniformly convex and $p$-uniformly smooth for some $q$ and some $p$; an earlier result due to Enflo \cite{enflo} stated that $\X$ is super-reflexive iff it 
is isomorphic to a uniformly convex space. 
Among many other results, Pisier \cite{pisier75} also showed that the super-reflexivity is equivalent to the super-Radon-Nikodym property. 
Applications of the 2-uniform convexity/2-uniform smoothness to Finsler manifolds were given by 
%Assuming suitable bounds on 
%curvatures together with the 2-uniform convexity/2-uniform smoothness of tangent
%spaces, 
Ohta \cite{ohta}. % proved the 2-uniform convexity/2-uniform smoothness of Finsler manifolds.

It is clear that $\X$ is $(p,D)$-smooth iff inequality \eqref{eq:} with $C=D^p$ and $f=\|\cdot\|^p$ holds for all martingales (or even v-martingales) $(S_j)_{j=1}^n$ with values in $\X$ and conditionally symmetric differences $X_2,\dots,X_n$; by symmetrization, the same inequality will then hold without the conditional symmetry restriction, but with the worse constant $C=(2D)^p$ instead of $C=D^p$. 
These considerations suggest the following. 

Let us say that the space $\X$ is \emph{completely $(p,D)$-smooth} if inequality \eqref{eq:p-smooth} holds for all \emph{zero-mean} real-valued r.v.'s $X$ (and all $x$ and $y$ in $\X$). It is clear that $\X$ is completely $(p,D)$-smooth iff inequality \eqref{eq:} with $C=D^p$ and $f=\|\cdot\|^p$ holds for all martingales (or even v-martingales) $(S_j)_{j=1}^n$ with values in $\X$. Also, Proposition~\ref{prop:C_powers} immediately implies 

\begin{corollary}\label{cor:L^p} 
Take any $p\in(1,2]$ and any measure $\mu$ on any measurable space.  
Then the space $L^p(\mu)$ is completely $(p,D)$-smooth with the best possible constant $D=\tC_p^{1/p}$. 
So, 
for any $n\in\intr2\infty$ and v-martingale $(S_j)_{j=1}^n$ with values in $L^p(\mu)$, 
\begin{equation}\label{eq:L^p}
	\E\|S_n\|_p^p\le\E\|X_1\|_p^p+\tC_p\sum_{j=2}^n\E\|X_j\|_p^p  
\end{equation} 
(cf.\ \eqref{eq:conc-p-sums}). 
\end{corollary}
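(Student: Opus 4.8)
The plan is to deduce \eqref{eq:L^p} from the scalar power-moment inequality \eqref{eq:powers} of Theorem~\ref{th:} by integrating ``slotwise'' over the base space of $\mu$, using that $\|g\|_p^p=\int|g(\omega)|^p\,\mu(\dd\omega)$ for $g\in L^p(\mu)$. Fix $p\in(1,2]$. Since $|\cdot|^p\in\F_{1,2}$ (Subsubsection~\ref{F_12}), Theorem~\ref{th:} applies with $f=|\cdot|^p$, the pertinent constant being $C_{|\cdot|^p}=\tC_p$. Let $(S_j)_{j=1}^n$ be a v-martingale with values in $L^p(\mu)$. If the right-hand side of \eqref{eq:L^p} is infinite there is nothing to prove, so assume $\E\|X_j\|_p^p<\infty$ for all $j\in\intr1n$; since the $S_j$ are essentially separably valued we may also take $\mu$ to be $\sigma$-finite and fix jointly measurable representatives, writing $S_j(\omega',\omega)$ for $\omega'$ in the probability space and $\omega$ in the base space of $\mu$.

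The crux is to check that for $\mu$-almost every $\omega$ the real-valued sequence $\big(S_j(\cdot,\omega)\big)_{j=1}^n$ is again a v-martingale. Integrability is immediate: by Tonelli, $\E|X_j(\cdot,\omega)|^p<\infty$, hence $\E|X_j(\cdot,\omega)|<\infty$, for $\mu$-a.e.\ $\omega$. For the conditioning, the vanishing of the Bochner conditional expectation $\E(X_j\mid S_{j-1})=0$ in $L^p(\mu)$ forces $\E\big(X_j(\cdot,\omega)\mathbf{1}_A\big)=0$ for $\mu$-a.e.\ $\omega$ and every $A$ in a countable algebra generating $\sigma(S_{j-1})$, whence $\E\big(X_j(\cdot,\omega)\mid S_{j-1}\big)=0$ for $\mu$-a.e.\ $\omega$; the tower property for $\sigma\big(S_{j-1}(\cdot,\omega)\big)\subseteq\sigma(S_{j-1})$ then yields $\E\big(X_j(\cdot,\omega)\mid S_{j-1}(\cdot,\omega)\big)=0$. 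Applying Theorem~\ref{th:}(I) with $f=|\cdot|^p$ to this scalar v-martingale for $\mu$-a.e.\ $\omega$ and integrating $\mu(\dd\omega)$, with one more use of Tonelli to interchange $\int\mu(\dd\omega)$ and $\E$, gives exactly \eqref{eq:L^p}. Specializing \eqref{eq:L^p} to $n=2$ with $S_1\equiv x$ and $X_2=Xy$ --- a v-martingale for every zero-mean real-valued $X$ and every $x,y\in L^p(\mu)$ --- turns it into inequality \eqref{eq:p-smooth} with $D=\tC_p^{1/p}$ holding for every zero-mean real-valued $X$, i.e.\ into complete $(p,\tC_p^{1/p})$-smoothness of $L^p(\mu)$.

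For optimality of $D=\tC_p^{1/p}$, assume $L^p(\mu)\ne\{0\}$ (otherwise the claim is vacuous) and fix $e\in L^p(\mu)$ with $\|e\|_p=1$, so that $\R e$ is an isometric copy of $\R$. If $L^p(\mu)$ were completely $(p,D)$-smooth, then so would be $\R e$; on $\R$ this reads $\E|x+X|^p\le|x|^p+D^p\,\E|X|^p$ for all $x\in\R$ and all zero-mean real-valued $X$, and conditioning on the first difference (whose conditional law of the second difference is again zero-mean) upgrades this to inequality \eqref{eq:} with $f=|\cdot|^p$, $C=D^p$, for every real-valued $2$-term v-martingale. By Theorem~\ref{th:}(II) (with $n=2$) the smallest admissible $C$ there is $C_{|\cdot|^p}=\tC_p$, so $D^p\ge\tC_p$; combined with the previous paragraph, $\tC_p^{1/p}$ is the best possible constant.

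I expect the single genuine difficulty to be the ``descent'' in the middle paragraph: transferring the conditional-mean-zero property of the Bochner-valued v-martingale down to the $\mu$-a.e.\ pointwise scalar sequences, which requires care with measurable representatives, Fubini/Tonelli, the Bochner conditional expectation, and the tower property for the coarser $\sigma$-algebras $\sigma\big(S_{j-1}(\cdot,\omega)\big)$. Everything else is bookkeeping together with direct appeals to parts (I) and (II) of Theorem~\ref{th:} and to the membership $|\cdot|^p\in\F_{1,2}$.
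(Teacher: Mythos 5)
Your proposal is correct and rests on the same two basic ideas that the paper leaves implicit behind its ``immediately implies'': Tonelli ``slotwise'' integration over the base space of $\mu$ to pass from the scalar power-moment inequality to the $L^p(\mu)$ one, and an isometric embedding of $\R$ into $L^p(\mu)$ together with Theorem~\ref{th:}(II) for the optimality of $D=\tC_p^{1/p}$. The genuine difference is in how the slotwise step is organized. You integrate the full $n$-term scalar v-martingale inequality pointwise in $\omega$, which forces you to establish the ``descent'' from $\E(X_j\mid S_{j-1})=0$ in $L^p(\mu)$ to $\E\big(X_j(\cdot,\omega)\mid S_{j-1}(\cdot,\omega)\big)=0$ for $\mu$-a.e.\ $\omega$. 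As you yourself flag, this is the delicate point, and it needs a little more than a countable generating algebra for $\sigma(S_{j-1})$: one must also justify that the scalar random variable $S_{j-1}(\cdot,\omega)$ is $\sigma(S_{j-1})$-measurable for a.e.\ $\omega$ --- not automatic, since point evaluation is not a bounded linear functional on $L^p$ --- which works through the jointly measurable representative and simple-function approximation, but is not fully spelled out in your sketch. The paper instead routes through the equivalence stated just before the corollary (``$\X$ is completely $(p,D)$-smooth iff \eqref{eq:} holds with $C=D^p$ and $f=\|\cdot\|^p$ for $\X$-valued v-martingales''), whose nontrivial direction reduces matters, via the mixture decomposition of zero-mean laws into zero-mean two-point (hence rank-one) laws, to the $n=2$ case with a deterministic first term. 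For that case the slotwise increment $X_2(\omega)=X\,y(\omega)$ is simply a zero-mean real random variable for each fixed $\omega$, so the Bochner conditional-expectation descent is bypassed altogether and Theorem~\ref{th:}(I) applies pointwise without measurability headaches. Both routes are valid; yours is more self-contained but carries the extra technical burden you identified, while the paper's route is what makes the one-line deduction from Proposition~\ref{prop:C_powers} tenable.
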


%The modulus of smoothness of $\X$ is defined by the formula 
%\begin{equation}
%	\rho_\X(t):=\sup\{\tfrac12\,(\|x+y\|+\|x-y\|)-1\colon\|x\|=1, \|y\|=t\}
%\end{equation}
%for $t\in(0,\infty)$. 

The above discussion suggests that the form of inequality \eqref{eq:} is rather natural in such contexts as concentration of measure, uniform smoothness, and martingales (or v-martingales). Yet, in the case when the differences $X_1,\dots,X_n$ are independent real-valued zero-mean r.v.'s, the form of 
the following immediate corollary of Theorem~\ref{th:} may be more relevant. 

\begin{corollary}\label{cor:}
For any $f\in\F_{1,2}\setminus\{0\}$, $n\in\intr2\infty$, and (real-valued) v-martin\-gale $(S_j)_{j=1}^n$, 
\begin{equation}\label{eq:spread}
	\E f(S_n)\le K\sum_{j=1}^n\E f(X_j) 
\end{equation}
with $K=C_f$. 
\end{corollary}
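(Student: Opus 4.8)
The plan is to read this off directly from Theorem~\ref{th:}, so the argument is essentially one line. First I would apply part~(I) of Theorem~\ref{th:} to the given $f\in\F_{1,2}\setminus\{0\}$, to $n\in\intr2\infty$, and to the v-martingale $(S_j)_{j=1}^n$, obtaining
\begin{equation*}
	\E f(S_n)\le\E f(X_1)+C_f\sum_{j=2}^n\E f(X_j).
\end{equation*}
Next I would use that every $f\in\F_{1,2}$ is convex with $f(0)=0$ and hence nonnegative, so that $\E f(X_1)$ is a well-defined element of $[0,\infty]$; combined with the lower bound $C_f\ge1$ supplied by part~(III) of Theorem~\ref{th:}, this gives $\E f(X_1)\le C_f\,\E f(X_1)$. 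Substituting into the displayed inequality yields
\begin{equation*}
	\E f(S_n)\le C_f\,\E f(X_1)+C_f\sum_{j=2}^n\E f(X_j)=C_f\sum_{j=1}^n\E f(X_j),
\end{equation*}
which is exactly \eqref{eq:spread} with $K=C_f$, as claimed.

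To handle the degenerate cases cleanly, I would note that if $\E f(X_1)=\infty$ then the right-hand side above already equals $\infty$ (since $0<C_f\le2<\infty$ by part~(III) of Theorem~\ref{th:}), so \eqref{eq:spread} holds trivially; while if $\E f(X_1)<\infty$, all the expectations appearing are legitimate elements of $[0,\infty]$ and the two displayed steps are valid verbatim. One may also remark in passing that specializing \eqref{eq:spread} to v-martingales with $X_1\equiv0$ reduces it to the case $X_1\equiv0$ of \eqref{eq:}, which indicates that the value $K=C_f$ is natural and cannot be lowered without restricting the class of v-martingales further.

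I do not expect any genuine obstacle: the corollary is an immediate consequence of the main inequality \eqref{eq:} of Theorem~\ref{th:} together with the estimate $C_f\ge1$, the only substantive point being the trivial observation that nonnegativity of $f$ allows the leading summand $\E f(X_1)$ to be absorbed into the constant factor $C_f$.
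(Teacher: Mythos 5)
Your proof is correct and matches the paper's implicit argument: it absorbs the summand $\E f(X_1)$ into $C_f\,\E f(X_1)$ using $C_f\ge1$, exactly as the paper does when, just after stating the corollary, it rewrites the right-hand side of \eqref{eq:} as $C_f\sum_{j=1}^n\E f(X_j)-(C_f-1)\E f(X_1)$ and drops the nonnegative subtracted term. One caveat on your closing ``remark in passing'': the paper explicitly observes, in the sentence immediately following Corollary~\ref{cor:}, that the constant $K=C_f$ in \eqref{eq:spread} is \emph{not} best possible (at least for independent zero-mean $X_j$'s), so the suggestion that $K=C_f$ ``cannot be lowered without restricting the class of v-martingales further'' is unwarranted --- the sharpness statement in part~(II) of Theorem~\ref{th:} is about the form \eqref{eq:}, not \eqref{eq:spread}, and the near-extremal examples there have a nonzero, heavy $X_1$ precisely so that the $\E f(X_1)$ term is singled out. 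This does not affect the validity of the proof itself.
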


However, in inequality \eqref{eq:spread} the constant factor $K=C_f$ is no longer the best possible one, at least for independent zero-mean $X_j$'s. 
One way to reduce the constant is as follows. 
In the conditions of Corollary~\ref{cor:}, rewrite the right-hand side of \eqref{eq:} with $C=C_f$ as $C_f\sum_{j=1}^n\E f(X_j)-(C_f-1)\E f(X_1)$. Then, assuming that 
%bserve that, in the conditions of part (I) of Theorem~\ref{th:}, if 
$\E f(X_1)\ge\frac\la n\,\sum_{j=1}^n\E f(X_j)$ for some $\la\in(0,\infty)$, one sees that the constant factor $K=C_f$ in \eqref{eq:spread} can  be reduced by spreading the ``excess'' $C_f-1\ge0$ over all the summands $\E f(X_1),\dots,\E f(X_n)$, to get \eqref{eq:spread} with  
\begin{equation}\label{eq:M}
	K=C_f-\tfrac\la n\,(C_f-1)\le C_f. 
\end{equation}

To develop this simple observation a bit further, let us take any $\la\in(0,\infty)$ and say that 
a sequence $(S_j)_{j=1}^n$ is 
a \emph{$\la$-good rearranged-v-martingale} if there are 
(i) some $i\in\intr1n$ such that 
$\E f(X_i)\ge\frac\la n\,\sum_{j=1}^n\E f(X_j)$ and (ii)   
a permutation $(j_1,\dots,j_{n-1})$ of the set $\intr1n\setminus\{i\}$ such that  $(X_i,X_{j_1},\dots,X_{j_{n-1}})$ is the difference sequence of a v-martingale.
Note that, if the differences $X_1,\dots,X_n$ of a sequence $(S_j)_{j=1}^n$ are independent zero-mean r.v.'s, then $(S_j)_{j=1}^n$ is a $1$-good rearranged-v-martingale. 
(In general, a $\la$-good rearranged-v-martingale does not have to be a v-martingale.) 
Thus, one obtains  

\begin{corollary}\label{cor:1n}
For any $f\in\F_{1,2}\setminus\{0\}$, $n\in\intr2\infty$, and $\la$-good rearranged-v-martingale $(S_j)_{j=1}^n$, inequality \eqref{eq:spread} holds, again with $K$ as in \eqref{eq:M}. 
\end{corollary}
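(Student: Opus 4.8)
The plan is to deduce Corollary~\ref{cor:1n} from Theorem~\ref{th:}(I), applied to the rearranged difference sequence, together with the ``spreading the excess'' computation indicated just before \eqref{eq:M} and the lower bound $C_f\ge1$ from Theorem~\ref{th:}(III) (that is, from \eqref{eq:1<C_f<2}).

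Fix $f\in\F_{1,2}\setminus\{0\}$, $n\in\intr2\infty$, and a $\la$-good rearranged-v-martingale $(S_j)_{j=1}^n$, with differences $X_j:=S_j-S_{j-1}$; set $T:=\sum_{j=1}^n\E f(X_j)$, which we may assume to be finite and positive (the degenerate cases $T\in\{0,\infty\}$ being handled routinely). By the definition of a $\la$-good rearranged-v-martingale there are an index $i\in\intr1n$ with $\E f(X_i)\ge\tfrac\la n\,T$ and a permutation $(j_1,\dots,j_{n-1})$ of $\intr1n\setminus\{i\}$ such that $(X_i,X_{j_1},\dots,X_{j_{n-1}})$ is the difference sequence of some v-martingale $(\tS_m)_{m=1}^n$. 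The one thing to observe is that $\tS_n=X_i+X_{j_1}+\dots+X_{j_{n-1}}=\sum_{j=1}^n X_j=S_n$, a finite sum being independent of the order of its terms; hence $\E f(\tS_n)=\E f(S_n)$.

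Now apply Theorem~\ref{th:}(I) to the v-martingale $(\tS_m)_{m=1}^n$, whose first difference is $X_i$ and whose remaining $n-1$ differences are the $X_j$ with $j\ne i$:
\[
	\E f(S_n)=\E f(\tS_n)\le\E f(X_i)+C_f\sum_{j\in\intr1n\setminus\{i\}}\E f(X_j)=C_f\,T-(C_f-1)\,\E f(X_i).
\]
Since $C_f-1\ge0$ and $\E f(X_i)\ge\tfrac\la n\,T$, the last expression is at most $C_f\,T-(C_f-1)\,\tfrac\la n\,T=\bigl(C_f-\tfrac\la n\,(C_f-1)\bigr)T$, which is exactly the right-hand side of \eqref{eq:spread} with $K$ as in \eqref{eq:M}. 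Note also that here necessarily $\la\le n$: since $\E f(X_i)$ is one of the nonnegative summands of $T$, the condition $\tfrac\la n\,T\le\E f(X_i)\le T$ forces $\tfrac\la n\le1$, so in particular $K\ge1$.

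I do not anticipate any genuine obstacle: given Theorem~\ref{th:}, the corollary is in essence a bookkeeping statement. The only two minor points needing care are that the reordering preserves the v-martingale property --- which is part of the very definition of a $\la$-good rearranged-v-martingale --- and the trivial handling of the degenerate totals $T=0$ and $T=\infty$.
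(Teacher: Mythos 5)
Your proof is correct and follows exactly the "spreading the excess" argument the paper sketches in the paragraph preceding the corollary: apply Theorem~\ref{th:}(I) to the v-martingale obtained from the rearranged differences (noting $\tS_n=S_n$), then use $C_f\ge1$ and $\E f(X_i)\ge\tfrac\la n\sum_j\E f(X_j)$ to get the reduced constant $K$ of \eqref{eq:M}. The observation that $\la\le n$ necessarily (hence $K\ge1$) when $T>0$ is a tidy touch the paper leaves implicit.
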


In the special case of the power functions $|\cdot|^p$ \big(with $p\in(1,2)$\big) in place of general $f\in\F_{1,2}\setminus\{0\}$, an inequality of the form \eqref{eq:spread} was obtained by von Bahr and Esseen (vBE) \cite{bahr65}: 
\begin{equation}\label{eq:vBE}
	\E|S_n|^p\le K\sum_{j=1}^n\E|X_j|^p,    
\end{equation}
with the constant factor $K=2-\frac1n=2-\frac1n(2-1)$, which, by part (iii) of Proposition~\ref{prop:C_powers}, is greater than the $K$ in \eqref{eq:M}, again for $f=|\cdot|^p$ with $p\in(1,2)$.  
The vBE inequality \eqref{eq:vBE} has been used in various kinds of studies, see e.g.\  \cite{bel-smir,barral,surg,mor-weiss,manst07,gonz,molina,ben-arous,cohen-lin,johnson,fan-ai,barral99,put-vZwet}, among the more recent articles. 

As noted by vBE \cite{bahr65}, the
special case of inequality \eqref{eq:vBE} (with $K=1$) 
when the conditional distributions of the differences $X_i$ given $S_{i-1}$ are symmetric for all $i\in\intr2n$  easily follows from Clarkson's inequality \cite{clarkson} 
\begin{equation}\label{eq:clarkson}
	|x+y|^p+|x-y|^p\le2|x|^p+2|y|^p
\end{equation}
for all real $x$ and $y$ and all $p\in[1,2]$. 
\big(As pointed out in \cite{clarkson}, inequality \eqref{eq:clarkson} obviously implies that $L^p$ is uniformly smooth, and in fact $p$-uniformly smooth.\big)
Actually, it is easy to see that Clarkson's inequality \eqref{eq:clarkson} is equivalent to the symmetric case of \eqref{eq:vBE}, with $K=1$. 

As mentioned in \cite{bahr65}, an inequality of the form \eqref{eq:vBE} is not of optimal order in $n$ for independent identically distributed real-valued zero-mean $X_i$'s 
and may be used together with a H\"older bound such as $\E|S_n|^p\le(\E S_n^2)^{p/2}$. Using similar considerations together with symmetrization and truncation, Manstavichyus \cite{manst82} obtained  bounds on $\E|S_n|^p$ from above and below, which differ from each other by an (unspecified) factor depending only on $p$. 
The proof of Theorem~\ref{th:} (and especially that of part (II) of Lemma~\ref{lem:x,Y}) shows that near-extremal r.v.'s $X_1,\dots,X_n$, for which the constant $C$ in \eqref{eq:} cannot be non-negligibly less than $C_f$, are as follows: $X_1$ and $X_2$ are independent, zero-mean, and both highly skewed in the same direction (both to the right or both to the left); $|X_2|$ is much smaller than $|X_1|$; and $X_3,\dots,X_n$ are zero or nearly so. This suggests that the inequality \eqref{eq:vBE} should be most useful for independent real-valued zero-mean $X_i$'s 
when the distributions of the $X_i$'s are quite different from one another and/or highly skewed and/or heavy-tailed. 

Again in the case when the differences $X_1,\dots,X_n$ are independent zero-mean r.v.'s, 
von Bahr and Esseen \cite{bahr65} made an effort to improve their constant $K=2-\frac1n$ in \eqref{eq:vBE}.  
For such $X_i$'s and the values of $p$ in a left neighborhood of $2$ such that  
$D(p):=\frac{13.52}{\pi(2.6)^p} \Ga(p) \sin\frac{\pi p}2
=\frac2\pi (\frac{13}5)^{2 - p} \Ga(p) \sin\frac{\pi p}2<1$, they showed that \eqref{eq:vBE} holds   
with 
$K=C_p^\vBE:=\frac1{\big(1-D(p)\big)_+}$, assuming the convention $\frac10:=\infty$; 
in fact, the constant factor $C_p^\vBE$ may improve on (i.e., may be less than) the factor $2-\frac1n$ only for  values of $p$ in a left neighborhood of $2$ such that $D(p)<\frac12$. It is stated (without proof) in \cite{bahr65} that $D(p)$ decreases in $p\in(1,2)$ and that the mentioned left neighborhood contains the interval $[1.6,2]$; cf. Figure~\ref{fig:tC_p}, where the von Bahr--Esseen constant factor $2\wedge C_p^\vBE$ is compared with the optimal (for \eqref{eq:}) constant factor $\tC_p$. 
\big(There are a couple of typos in \cite{bahr65}: in \cite[(11)]{bahr65}, one should have $r(2.6)^r$ instead of $(r2.6)^r$, and also the expression \cite[(12)]{bahr65} for $D(p)$ should have $\pi(2.6)^r$ instead of $(\pi 2.6)^r$.\big) 

\begin{wrapfigure}{l}{.45\textwidth}
\vspace*{-8pt}	
%  \begin{center}
    \includegraphics[width=.45\textwidth]{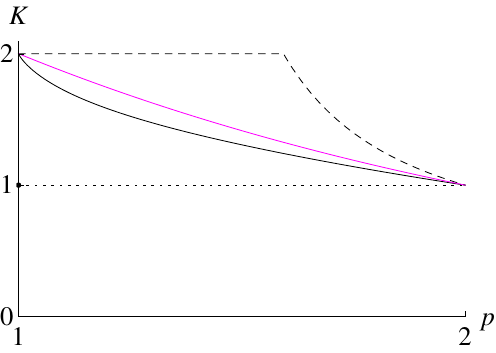}
%  \end{center}
  \caption{\ \ $\tC_p$, solid; \quad $W_p$, magenta;
	\qquad\ \protect\rule{0pt}{1pt}
	\ \qquad $2\wedge C_p^\vBE$, dashed;\quad $1$, dotted.}
\vspace*{-12pt}	
	\label{fig:tC_p}
\end{wrapfigure}

The method of \cite{bahr65} is based on a representation of the absolute moment $\E|X|^p$ of a r.v.\ $X$ as a certain integral transform of the Fourier transform of the distribution of $X$. More general representations, for the positive-part moments $\E X_+^p$, were obtained in \cite{brown70,positive}.  

Take now again any $p\in(1,2]$. 
Woyczy{\'n}ski \cite{woycz73} considered the class $\G_{p-1}$ of Banach spaces $\X$ defined by the following condition: there exist a map $G\colon\X\to\X^*$ and a constant $A=A_{p,\X}\in(0,\infty)$ such that for all $x$ and $y$ in $\X$ one has (i) $\|G(x)\|=\|x\|^{p-1}$, (ii) $G(x)x=\|x\|^p$, and (iii) $\|G(x)-G(y)\|\le A\|x-y\|^{p-1}$. The class $\G_1$ was introduced by Fortet and Mourier \cite{fort-mour}. 
Hoffmann-J{\o}rgensen \cite{hoff-jorg} proved that $\X\in\G_{p-1}$ iff $\X$ is $p$-uniformly smooth.

Woyczy{\'n}ski \cite{woycz73} showed that inequality \eqref{eq:vBE} holds for any independent zero-mean random vectors $X_1,\dots,X_n$ in any Banach space $\X\in\G_{p-1}$, 
with $|\cdot|$ and $K$ replaced by $\|\cdot\|$ and $A_{p,\X}$. 
% \big(where $A$ is the same as the constant in item (iii) above\big). 
As noted in \cite{woycz73}, the space $L^p$ is in $\G_{p-1}$, with the constant $A=2$; at that, one should take $G(x)=x^{[p-1]}:=|x|^{p-1}\sign x\in L^q=(L^p)^*$ for all $x\in L^p$. It is not hard to see that the best possible constant $A=A_{p,\X}$ for $\X=L^p$ is 
\begin{equation*}
	W_p:=\sup_{u\in(-1,1)}\frac{1-u^{[p-1]}}{(1-u)^{p-1}}=2^{2-p}, 
\end{equation*}
which is in agreement with the definition of $W_p$ in part (v) of Proposition~\ref{prop:C_powers}. 
Thus, one has \eqref{eq:vBE} with $K=W_p=2^{2-p}$ for independent zero-mean differences $X_1,\dots,X_n$, which may be either real-valued or, equivalently, with values in $L^p$ (in which case $|\cdot|$ is replaced by $\|\cdot\|_p$). 
The constant $K=W_p$ in \eqref{eq:vBE} is not the best possible one, even for independent zero-mean real-valued  $X_1,\dots,X_n$, even if $n$ is not fixed; indeed, by 
part (v) of Proposition~\ref{prop:C_powers}, $W_p>\tC_p$. 
On the other hand, the following proposition takes place.  

\begin{proposition}\label{prop:bw}\ 
One has $C_p^\vBE>W_p$ for all $p\in[1,2)$. 
% such that $C_p^\vBE$ is defined \big(that is, for all $p\in[1,2)$ such that
%$D(p)<1$\big). 
\end{proposition}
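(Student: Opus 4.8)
The plan is first to reduce the inequality $C_p^\vBE>W_p$ to a single elementary estimate on $D(p)$, and then to prove that estimate by bounding the three ``transcendental'' factors of $D(p)$ separately. For the reduction, fix $p\in[1,2)$, so $W_p=2^{2-p}\in(1,2]$; I would show it suffices to prove $D(p)>1-2^{p-2}$. Indeed, if $D(p)\ge1$ then $(1-D(p))_+=0$, hence $C_p^\vBE=\infty>W_p$, while also $1-2^{p-2}<1\le D(p)$; and if $D(p)<1$ then $C_p^\vBE=\tfrac1{1-D(p)}$, so $C_p^\vBE>2^{2-p}\iff1-D(p)<2^{p-2}\iff D(p)>1-2^{p-2}$. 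Substituting $q:=2-p\in(0,1]$ and using $\Gamma(p)=\Gamma(2-q)$ and $\sin\tfrac{\pi p}2=\sin\!\big(\pi-\tfrac{\pi q}2\big)=\sin\tfrac{\pi q}2$, so that $D(p)=\tfrac2\pi(\tfrac{13}5)^q\Gamma(2-q)\sin\tfrac{\pi q}2$, the goal becomes
\[
	\Phi(q):=\frac2\pi\Big(\frac{13}5\Big)^{q}\Gamma(2-q)\sin\frac{\pi q}2-\big(1-2^{-q}\big)>0\qquad\text{for all }q\in(0,1].
\]

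The next step is to record two facts, both following from convexity of $\Gamma$ on $(0,\infty)$ together with $\Gamma(1)=\Gamma(2)=1$: (a) $\Gamma\le1$ on $[1,2]$ and $\Gamma\ge1$ on $(0,1]$, hence $\Gamma(2-q)=(1-q)\Gamma(1-q)\ge1-q$ for $q\in[0,1]$; and (b), via the reflection formula $\Gamma(q)\Gamma(1-q)=\pi/\sin\pi q$ and $\sin\pi q=2\sin\tfrac{\pi q}2\cos\tfrac{\pi q}2$,
\[
	\Gamma(2-q)\sin\frac{\pi q}2=q\cdot\frac{(1-q)\pi}{2\,\Gamma(1+q)\,\cos\frac{\pi q}2}\ge q\qquad(q\in(0,1)),
\]
since $\Gamma(1+q)\le1$ and $\tfrac{(1-q)\pi}{2\cos(\pi q/2)}=\tfrac{(1-q)\pi}{2\sin(\pi(1-q)/2)}\ge1$ by $\sin x\le x$; by continuity $\Gamma(2-q)\sin\tfrac{\pi q}2\ge q$ also at $q=0$ and $q=1$.

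Then I would split at $q=\tfrac1{10}$. For $q\in[\tfrac1{10},1]$, fact (b) gives $D(p)=\tfrac2\pi(\tfrac{13}5)^q\big(\Gamma(2-q)\sin\tfrac{\pi q}2\big)\ge\tfrac2\pi(\tfrac{13}5)^{1/10}q$, and a short numerical check (e.g.\ with $\pi<3.1416$ and $\ln2<0.69315$) gives $\big(\tfrac{\pi\ln2}2\big)^{10}<2.4<\tfrac{13}5$, i.e.\ $(\tfrac{13}5)^{1/10}>\tfrac{\pi\ln2}2$; therefore $D(p)>q\ln2>1-2^{-q}$, so $\Phi(q)>0$. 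For $q\in(0,\tfrac1{10}]$ I would instead keep the $\sin$ and $\Gamma$ factors: using $(\tfrac{13}5)^q\ge1$, fact (a), $\sin\tfrac{\pi q}2\ge\tfrac{\pi q}2\cos\tfrac{\pi q}2$ (from $\tan x\ge x$), and $\cos\tfrac{\pi q}2\ge1-\tfrac{\pi^2q^2}8\ge\tfrac{79}{80}$ for $q\le\tfrac1{10}$ (as $\pi^2<10$), I get
\[
	D(p)\ge(1-q)\,q\,\cos\tfrac{\pi q}2\ge\tfrac9{10}\cdot\tfrac{79}{80}\,q>\tfrac7{10}\,q>q\ln2\ge1-2^{-q},
\]
so again $\Phi(q)>0$. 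Combining the two ranges yields $\Phi>0$ on $(0,1]$, which is the proposition.

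I expect the only delicate point to be the regime $q\downarrow0$, that is, $p\uparrow2$: there $(\tfrac{13}5)^q\to1$ is of no help, and the argument must exploit that $\sin\tfrac{\pi q}2\sim\tfrac{\pi q}2$ outgrows $1-2^{-q}\sim q\ln2$ precisely because $\tfrac\pi2>\ln2$. This is why a case split near $q=0$ (at any point below $\log_{13/5}\tfrac{\pi\ln2}2\approx0.089$) seems forced, and why the clean bound $\Gamma(2-q)\sin\tfrac{\pi q}2\ge q$ of fact (b)---which is tight at both endpoints $q=0$ and $q=1$---cannot cover the entire interval by itself. Everything else reduces to routine calculus inequalities.
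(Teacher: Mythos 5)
Your proof is correct, and it takes a genuinely different route from the paper's. You begin with the same reduction (the paper shows $\beta(p):=(1-D(p))\,2^{2-p}<1$, which is exactly your $D(p)>1-2^{p-2}$, allowing also for the case $D(p)\ge1$), but the two arguments then diverge. The paper differentiates $\beta$: it establishes the crude derivative bound $\beta'(p)>-1.4$ and combines it with a numerical grid check at $p\in\{5/4,3/2\}$ to cover $[1,3/2]$, and then proves $\beta''<0$ on $[3/2,2]$ so that concavity together with $\beta(2)=1$ and $\beta'(2)=1-\ln2>0$ handles the rest; this forces it to manipulate $(\ln\Gamma)'$, $(\ln\Gamma)''$ and a fairly messy quantity $\beta_2(p)$. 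Your approach instead substitutes $q=2-p$ and bounds $D$ from below through two clean facts about $\Gamma$ --- convexity plus $\Gamma(1)=\Gamma(2)=1$ gives $\Gamma(2-q)\ge1-q$ and $\Gamma(1+q)\le1$, and the reflection formula combined with $\sin x\le x$ yields the endpoint-tight bound $\Gamma(2-q)\sin\tfrac{\pi q}{2}\ge q$ --- together with the elementary estimates $1-2^{-q}\le q\ln2$, $\sin x\ge x\cos x$, and $\cos x\ge1-x^2/2$. The split at $q=1/10$ is well-chosen: the bound $\Gamma(2-q)\sin\tfrac{\pi q}{2}\ge q$ is sharp at both $q=0$ and $q=1$, so the factor $(13/5)^{q}$ is needed to supply slack, which is why you cannot simply use $(13/5)^q\ge1$ over the whole range. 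The single numerical input $\bigl(\tfrac{\pi\ln2}{2}\bigr)^{10}<2.4<\tfrac{13}{5}$ is a rational arithmetic check. Net assessment: yours is more elementary and self-contained (no differentiation of $D$, no digamma/trigamma bounds, no grid search); the paper's argument, while heavier in machinery, avoids the reflection formula and the structural $\Gamma$ lemmas. Both are complete and rigorous.
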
 

So, $C_p^\vBE>W_p>\tC_p$ for all $p\in(1,2)$. This comparison is illustrated in Figure~\ref{fig:tC_p}. 

\section{Proofs}\label{proofs} 
This section consists of four subsections. 
In Subsection~\ref{proofs:F12,C_f,C_psi}, we shall prove 5 propositions, of the 8 ones stated in Section~\ref{intro}; three of these 5 propositions will be used in the proof of Theorem~\ref{th:}, in Subsection~\ref{proof of th}.  
The proof of Proposition~\ref{prop:C_powers} (which is also used in the proof of Theorem~\ref{th:}) 
is more involved than those of the other propositions, 
and it will be presented separately, in Subsection~\ref{proof of C_powers}.  
Corollary~\ref{cor:concentr} and the related Propositions~\ref{lem:centring} and \ref{prop:minU} 
% (which complements the corollary) 
will be proved in Subsection~\ref{proof of {cor:concentr}}.

\subsection{Proofs of Propositions~\ref{prop:F12}, \ref{prop:osc}, \ref{prop:C_f}, \ref{prop:C_psi}, and \ref{prop:bw}}\label{proofs:F12,C_f,C_psi}

\begin{proof}[Proof of Proposition~\ref{prop:F12}]\ 
%\textbf{(I, II)}\quad 
To begin, note that 
\begin{equation}\label{eq:psi'}
\psi'_t(x)=2(t\wedge x)	
\end{equation}
for all $x\in[0,\infty)$ and $t\in(0,\infty)$. 
Take any $f\in\F_{1,2}$. Then, by \eqref{eq:f''} and the right continuity of the monotonic right derivative $f''$ of $f'$, the relation \eqref{eq:ga} defines a nonnegative Borel measure $\ga=\ga_f$ on $(0,\infty]$ and, by Fubini's theorem,  
\begin{align}
	f'(x)&=\int_0^x f''(u)\dd u
	=2\int_0^x \dd u \int_{(u,\infty]}\ga(\dd t)
	=2\int_{(0,\infty]}\ga(\dd t) \int_0^{t\wedge x} \dd u \notag\\
	&=2\int_{(0,\infty]}(t\wedge x)\ga(\dd t) \label{eq:f'=}
\end{align}
for all $x\in[0,\infty)$. 
In particular, this proves part (III) of the proposition and
(taken with $x=1$) implies the condition $\int_{(0,\infty]}(t\wedge1)\ga(\dd t)<\infty$ in part (I) of the proposition. 
Further, for all $x\in[0,\infty)$ \eqref{eq:f'=} yields 
\begin{equation*}
		f(x)=\int_0^x f'(u)\dd u
			=2\int_0^x \dd u \int_{(0,\infty]}(t\wedge u)\ga(\dd t)
			=2\int_{(0,\infty]}\ga(\dd t) \int_0^x (t\wedge u) \dd u, 
\end{equation*}
which implies \eqref{eq:f}, since $\int_0^x (t\wedge u) \dd u=\frac12\,\psi_t(x)$ for all $x\in[0,\infty)$ and $t\in(0,\infty]$. 
This proves the ``only if'' implication in part (I) of the proposition, since the functions $f$ and $\psi_t$ are even. 

To prove the ``if'' implication, assume that \eqref{eq:f} holds for some nonnegative Borel measure $\ga$ on $(0,\infty]$ such that $\int_{(0,\infty]}(t\wedge1)\ga(\dd t)<\infty$ and for 
all $x\in\R$. In view of \eqref{eq:psi'}, the condition $\int_{(0,\infty]}(t\wedge1)\ga(\dd t)<\infty$ implies that the integral 
$\int_{(0,\infty]}\psi'_t(x)\ga(\dd t)$ converges uniformly over all $x$ in any given compact subset of the interval $(0,\infty)$. 
So, one finds that \eqref{eq:f} implies \eqref{eq:f'}, which in turn implies that $f'$ is nondecreasing and concave on $[0,\infty)$ \big(because the function $\psi_t$ is so, for each $t\in(0,\infty)$\big). 
It is also easy to see that $f\in C^1(\R)$, $f(0)=0$, and $f$ is even. 
Thus, it is checked that $f\in\F_{1,2}$, which completes the proof of the ``if'' implication in part (I) of the proposition. 

It remains to prove part (II). Take indeed any $f\in\F_{1,2}$. 
Take also any nonnegative Borel measure $\ga$ on $(0,\infty]$ such that $\int_{(0,\infty]}(t\wedge1)\ga(\dd t)<\infty$ and \eqref{eq:f} holds for all $x\in\R$. 
We have to show that \eqref{eq:ga} takes place for all $x\in(0,\infty)$. 
Take indeed any such $x$. 
Then, as has been shown, one has identities \eqref{eq:f'}. 
Therefore, for any $h\in(0,\infty)$  
\begin{equation}\label{eq:r_t}
	\frac12\,\frac{f'(x+h)-f'(x)}h
	=\int_{(0,\infty]}r_t(x,h)\ga(\dd t),    
\end{equation}
where
$r_t(x,h):=\tfrac1h\,\big[\big((x+h)\wedge t\big)-(x\wedge t)\big]$, which is bounded (between $0$ and $1$) and converges to $\ii{t>x}$ as $h\downarrow0$. So, \eqref{eq:ga} follows from \eqref{eq:r_t} by dominated convergence. 
This completes the proof of part (II) of the proposition as well. 
\end{proof}

\begin{proof}[Proof of Proposition~\ref{prop:osc}] 
Part (ii) of the proposition is obvious on recalling that $x_j=q^{2^{j-1}}-1$ for $j\in\intr1\infty$. 
Note also that $\rho\big((x_j+1)^{4/3}-1\big)=\frac43$ for $j\in\intr1\infty$. 
So, to prove then part (iii), it is enough to show that $\tp_\eff(r)$ decreases from $\frac53$ to $\frac32$ and then increases back to $\frac53$
as $r$ increases from $1$ to $\frac43$ and then to $2$, which follows because the expressions $2-\frac2{3r}$ and $1+\frac2{3r}$ are, respectively, increasing and decreasing in $r\in[1,2]$, and they are equal to each other at  $r=\frac43$. 

It remains to prove part (i) of the proposition, which is equivalent to 
\begin{equation}\label{eq:(i)}
f_\alt(x)=x^{\tp_\eff(r)+o(1)}
\end{equation}
as $x\to\infty$, where $r:=\rho(x)\in(1,2]$, so that $x=q^{r2^{j-1}}-1$. In other words, it suffices to prove that the convergence \eqref{eq:(i)} with $x=q^{r2^{j-1}}-1$ takes place uniformly in $r\in(1,2]$ 
as $j\to\infty$. 
Assume indeed that $j\to\infty$ and $x=q^{r2^{j-1}}-1$.  
Introduce $y_j:=x_j+1$, so that $y_j=q^{2^{j-1}}$ for $j=1,2,\dots$. 
Then $x=y_j^{r+o(1)}$, and uniformly over all $k\in\{0,\dots,j-1\}$ 
one has   
$x-\frac{1}{2}(x_k+x_{k+1})=x^{1+o(1)}$; moreover, if at that $k\to\infty$ then  $x_{k+1}-x_k=x_{k+1}^{1+o(1)}=y_k^{2+o(1)}$, which shows that 
the $k$th summand in the sum $\sum _{k=0}^{j-1}\dots$ in \eqref{eq:f_alt} is $(xy_k^{2-2/3})^{1+o(1)}=(y_j^r\,y_k^{4/3})^{1+o(1)}$ 
as $k\to\infty$. 
So, the sum $\sum _{k=0}^{j-1}\dots$ in \eqref{eq:f_alt} is 
$(y_j^r\,y_{j-1}^{4/3})^{1+o(1)}=(y_j^r\,y_j^{2/3})^{1+o(1)}=y_j^{r+\frac23+o(1)}$. 

To estimate the difference $x-x_j$, which appears on the right-hand side of \eqref{eq:f_alt}, we need to distinguish two possible cases: $r\in[1,\frac43)$ and $r\in[\frac43,2]$. 
Uniformly over all $r\in[\frac43,2]$ one has 
$x-x_j=x^{1+o(1)}=y_j^{r+o(1)}$, so that the term on the right-hand side of \eqref{eq:f_alt} before the sum $\sum _{k=0}^{j-1}\dots$ is $y_j^{2r-\frac23+o(1)}$, which yields  
$f_\alt(x)=y_j^{2r-\frac23+o(1)}+y_j^{r+\frac23+o(1)}
=y_j^{(2r-\frac23)\vee(r+\frac23)+o(1)}=y_j^{r\tp_\eff(r)+o(1)}=x^{\tp_\eff(r)+o(1)}$, as in \eqref{eq:(i)}. 

It remains to consider the values $r\in[1,\frac43)$. For such values of $r$, the relation $x-x_j=x^{1+o(1)}$ no longer holds; for instance, $x-x_j=0$ if $r=1$. However, in this case one can obviously write $0\le x-x_j\le x$ and also $\tp_\eff(r)=1+\frac2{3r}>2-\frac2{3r}$. 
So, the term on the right-hand side of \eqref{eq:f_alt} before the sum $\sum _{k=0}^{j-1}\dots$ is $\le y_j^{2r-\frac23+o(1)}\le y_j^{r+\frac23+o(1)}$, 
whereas still $\sum _{k=0}^{j-1}\dots=y_j^{r+\frac23+o(1)}$; 
so,   
$y_j^{r+\frac23+o(1)}\le f_\alt(x)\le y_j^{r+\frac23+o(1)}+y_j^{r+\frac23+o(1)}$, whence 
$f_\alt(x)=y_j^{r+\frac23+o(1)}=y_j^{r\tp_\eff(r)+o(1)}=x^{\tp_\eff(r)+o(1)}$, thus proving \eqref{eq:(i)} uniformly over all $r\in[1,\frac43)$ as well.   
\end{proof}

\begin{proof}[Proof of Proposition~\ref{prop:C_f}]\ 

\textbf{(i)}\quad 
Since the function $f$ is nonzero, the set $\supp\ga$ is a nonempty subset of $(0,\infty]$. So, $s_f=\inf\supp\ga\in[0,\infty]$. If $s_f=\infty$ then $\supp\ga=\{\infty\}$, which implies, in view of \eqref{eq:f}, that $f=\psi_\infty$, which contradicts the assumption on $f$ in Proposition~\ref{prop:C_f}.  
This proves part (i) of the proposition. 

\textbf{(ii)}\quad Take any $s\in(0,s_f]$ and $t\in\supp\ga$, so that 
$t\in[s_f,\infty]$. Then $s_f>0$ and 
it is straightforward to check that $L_{\psi_t;s}(x)=\psi_t(s)$ for any %$s\in(0,s_f]$, $t\in[s_f,\infty]$,and
$x\in(0,s)$. Hence, by \eqref{eq:f} and \eqref{eq:f'}, 
$$L_{f;s}(x)=\int_{(0,\infty]}L_{\psi_t;s}(x)\ga(\dd t)
%=\int_{[s_f,\infty]}\!\!\!L_{\psi_t;s}(x)\ga(\dd t)
=\int_{(0,\infty]}\psi_t(s)\ga(\dd t)=f(s),$$
which proves part (ii) of Proposition~\ref{prop:C_f}.  

\textbf{(iii)}\quad 
Take any $s\in(s_f,\infty)$.  
Then $L'_{\psi_t;s}(0+)=2(s-t)_+$ for any $t\in(0,\infty]$. %, which is always nonnegative and, moreover, strictly positive if $t<s$. 
So, by \eqref{eq:f'} and \eqref{eq:ga}, 
$$L'_{f;s}(0+)=\int_{(0,\infty]}\!L'_{\psi_t;s}(0+)\ga(\dd t)
=2\int_{(0,\infty]}\!(s-t)_+\ga(\dd t)>0,$$
since for any $s\in(s_f,\infty)$ one has $\ga\big((0,s)\big)>0$. 
Similarly, 
$$L'_{f;s}(s-)=\int_{(0,\infty]}\!L'_{\psi_t;s}(s-)\ga(\dd t)
=-2\int_{(0,\infty]}\!t\ii{t<s}\ga(\dd t)<0.$$ 
This proves part (iii) of Proposition~\ref{prop:C_f}.

\textbf{(iv)}\quad 
In view of the rescaling identity $L_{f;s}(x)=L_{f_s;1}(\frac xs)$ with $f_s(u):=f(su)$, without loss of generality (w.l.o.g.) $s=1$. Then part (iv) of the proposition follows by parts (ii) and (iii) and the observation that $\ell_f(z):=L_{f;1}(1-\sqrt z)$ is concave in $z\in(0,1)$. In view of \eqref{eq:f}, it is enough to prove this observation for $f=\psi_t$ with $t\in(0,\infty]$; at that, by part (ii) of Proposition~\ref{prop:C_f} and because $s_{\psi_t}=t$, w.l.o.g.\ let us assume that $0<t<s=1$. Observe that the second derivative $\ell''_{\psi_t}(z)$ in $z$ admits of a piecewise-algebraic expression, which may be quickly obtained by using the Mathematica command \verb9PiecewiseExpand9. % file la-concavity.nb  
Applying then a \verb9Reduce9 command, one finds that $\ell''_{\psi_t}(z)\le0$ for all $t\in(0,1)$ and  $z\in(0,1)$. 
Now part (iv) of Proposition~\ref{prop:C_f} follows. 

\textbf{(v)}\quad 
Part (v) of the proposition follows by parts (i)--(iv), on recalling \eqref{eq:C_f} 
and taking into account that $L_{f;s}(0+)=f(s)$, for all $s\in(0,\infty)$. 

Proposition~\ref{prop:C_f} is now completely proved. 
\end{proof}

\begin{proof}[Proof of Proposition~\ref{prop:C_psi}]\ 
Take any $t\in(0,\infty]$. 
%That $\psi_t\in\F_{1,2}$ is trivial. 
That $C_{\psi_\infty}=1$ follows immediately by \eqref{eq:C_f}. 
So, w.l.o.g.\ $t\in(0,\infty)$, and then, by \eqref{eq:C_f} and homogeneity, w.l.o.g.\ $t=1$. 
Thus, it remains to show that $C_{\psi_1}=2$. 
Take any $s\in(s_{\psi_1},\infty)=(1,\infty)$ and observe that $L'_{\psi_1;s}(1)=-2(s\wedge2)<0$, whereas $L'_{\psi_1;s}(1-)=-2(s\wedge2)+2s\ge0$. Therefore, by part (iv) of Proposition~\ref{prop:C_f}, $\max\nolimits_{x\in(0,s)}L_{\psi_1;s}(x)=L_{\psi_1;s}(1)=s^2 - (s - 2)_+^2$. 
Now, using part (v) of Proposition~\ref{prop:C_f}, it is easy to see % file 
that 
$C_{\psi_1}=\sup\nolimits_{s\in(1,\infty)}\tfrac{s^2 - (s - 2)_+^2}{s^2 - (s -1)_+^2} 	
	=\lim\nolimits_{s\to\infty}\tfrac{s^2 - (s - 2)_+^2}{s^2 - (s -1)_+^2}=2$. 
\end{proof}

\begin{proof}[Proof of Proposition~\ref{prop:bw}] 
Take any $p\in[1,2)$. 
It suffices to show that 
\begin{equation}\label{eq:be<1}
	\be(p):=\big(1-D(p)\big)2^{2-p}\overset{\text{(?)}}<1. 
\end{equation}
Observe that 
\begin{align*}
\be'(p)&=-2^{2-p} \ln2+
(\tfrac{26}5)^{2-p} \,\tfrac{\Gamma (p)}\pi 
\big[2 (\sin\tfrac{\pi  p}2) \big(\ln\tfrac{26}5-(\ln\Ga)'(p)\big)-\pi  \cos\tfrac{\pi  p}2\big]  \\
&>-2^{2-p} \ln2>-2 \ln2>-1.4;
\end{align*}
the first inequality here follows because $\cos\tfrac{\pi  p}2\le0$, $\sin\tfrac{\pi  p}2>0$, and $\ln\tfrac{26}5-(\ln\Ga)'(p)\ge\ln\tfrac{26}5-(\ln\Ga)'(2)>0$, taking into account that $\ln\Ga$ is convex and hence $(\ln\Ga)'$ is increasing. 
It is easy to see that $\max\{\be(1+\frac i4)\colon i\in\intr12\}<1-0.49$. 
So, $\be(p) < \be(1+\frac i4) + (1.4)\frac14<1-0.49 + (1.4)\frac14<1$ for $p\in[1 + \frac{i - 1}4, 1 + \frac i4]$ and $i\in\intr12$; thus, \eqref{eq:be<1} holds for all $p\in[1,\frac32]$. 

Next,
\begin{equation*}
	\be_2(p):=25 \pi \,\be''(p)\,2^{p - 1} 
	=A+B(E_1+E_2+E_3+E_4), 
\end{equation*}
where
\begin{gather*}
	A := 50\pi \, \ln^2 2, \quad B := 169\,\Ga(p)\, (\tfrac5{13})^p , \\
	E_1:= 4 \pi\,(\cos\tfrac{\pi  p}2)\, \ln\tfrac{26}5, \quad 
	E_2:= \ka\,\sin\tfrac{\pi  p}2, \\ 
	E_3:= -4 \big((\ln\Ga)'(p)^2 + (\ln\Ga)''(p)\big) \sin\tfrac{\pi  p}2, \\ 
	E_4:= (\ln\Ga)'(p) \big(-4 \pi \cos\tfrac{\pi  p}2 + 8 \ln\tfrac{26}5\, \sin\tfrac{\pi  p}2\big), 
\end{gather*}
and $\ka:=\pi ^2-4 \ln^2 2-4 \ln^2\frac{13}5-8 \ln2\, \ln\frac{13}5<0$, whence $E_2<0$.  
Also, $E_3<0$, because $(\ln\Ga)''>0$. 
Let us next bound $E_1$ and $E_4$ from above, assuming that $p\in[\frac32,2]$. 
Then $E_1\le4 \pi\,(\cos(\pi\frac34)\, \ln\tfrac{26}5<-14.6$;  
also, $(\ln\Ga)'(p)\ge(\ln\Ga)'(\frac32)>0$ and $(\ln\Ga)'(p)\le(\ln\Ga)'(2)$, so that  
$E_4\le(\ln\Ga)'(2) \big(4 \pi + 8 \ln\tfrac{26}5\big)<10.9$.  
Thus, for all $p\in[\frac32,2]$ 
\begin{equation*}
	\be_2(p)\le50\pi\, \ln^2 2+169\,\Ga(\tfrac32)\, (\tfrac5{13})^2(-14.6+10.9)<-6<0
\end{equation*}
and hence $\be''(p)<0$, so that $\be$ is strictly concave on $[\frac32,2]$. 
At that, $\be(2)=1$ and $\be'(2)=1-\ln2>0$; so, \eqref{eq:be<1} holds for all $p\in[\frac32,2)$ as well. 
\end{proof}

\subsection{Proof of Proposition~\ref{prop:C_powers}}\label{proof of C_powers} 
Of the 5 parts of the proposition, the most difficult to prove are parts~(iii) and (v), which are based to a certain extent on several lemmas. To state these lemmas, we need more notation. Recall the definition of $\ell(p,x)$ in \eqref{eq:ell} and introduce 
\begin{gather*}
	\ell_p(p,x):=\pd{}p\ell(p,x), \quad
	\ell_x(p,x):=\pd{}x\ell(p,x), \\
	\ell_{x,x}(p,x):=\pd{}x\ell_x(p,x)
	=\pd{{}^2}{x^2}\ell(p,x) %,\quad 
\end{gather*}
and also 
\begin{equation*}
	p^*_x:=\tfrac14(25 x + 2)\quad\text{and}\quad 
	x^*_p:=\tfrac2{25}\,(2 p-1),  
\end{equation*}
so that $x=x^*_p\iff p=p^*_x$. 
Now we are ready to state the lemmas: 

\begin{lemma}\label{lem:lxx}
For all $p\in(1,2)$ and $x\in(0,\frac12)$, one has $\ell_{x,x}(p,x)<0$ and hence $\ell_{x,x}(p,x)\ne0$. 
\end{lemma}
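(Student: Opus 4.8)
The plan is to reduce the claim to an elementary sign analysis of a function of one real variable. Differentiating $\ell(p,x)=(1-x)^p-x^p+px^{p-1}$ twice in $x$ gives
\[
	\ell_{x,x}(p,x)=p(p-1)\big[(1-x)^{p-2}-x^{p-2}+(p-2)x^{p-3}\big]=p(p-1)\,x^{p-3}\,g_p(x),
\]
where $g_p(x):=x^{3-p}(1-x)^{p-2}-x+p-2$ (using $x^{3-p}x^{p-2}=x$). Since $p(p-1)>0$ and $x^{p-3}>0$ for $p\in(1,2)$ and $x\in(0,\tfrac12)$, it suffices to show that $g_p(x)<0$ on $(0,\tfrac12)$.

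First I would record the two endpoint values of $g_p$. As $x\downarrow0$ one has $x^{3-p}\to0$ (because $3-p>0$), so $g_p(0+)=p-2$; and a direct computation gives $g_p(\tfrac12)=(\tfrac12)^{(3-p)+(p-2)}+p-\tfrac52=\tfrac12+p-\tfrac52=p-2$ as well. Thus $g_p$ takes the same negative value $p-2$ at both ends of the interval, and the task is to rule out an interior excursion above this level. To that end I would compute $g_p'(x)=h_p(x)-1$, where $h_p(x):=x^{2-p}(1-x)^{p-3}(3-p-x)$ is a product of three positive factors on $(0,\tfrac12)$.

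The heart of the argument is to show that $h_p$ is \emph{strictly increasing} on $(0,\tfrac12)$. Computing the logarithmic derivative,
\[
	(\ln h_p)'(x)=\frac{2-p}{x}+\frac{3-p}{1-x}-\frac1{3-p-x},
\]
and combining the last two terms over a common denominator (writing $a:=3-p$, so $a-1=2-p$) yields $\frac{a}{1-x}-\frac1{a-x}=\frac{(a-1)(a+1-x)}{(1-x)(a-x)}=\frac{(2-p)(4-p-x)}{(1-x)(3-p-x)}$, which is manifestly positive for $p\in(1,2)$ and $x\in(0,\tfrac12)$; adding $\frac{2-p}{x}>0$ gives $(\ln h_p)'>0$, hence $h_p'>0$. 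Since $h_p(0+)=0$ and $h_p(\tfrac12)=2(\tfrac52-p)=5-2p>1$, strict monotonicity produces a unique $x_0\in(0,\tfrac12)$ with $h_p(x_0)=1$; consequently $g_p'<0$ on $(0,x_0)$ and $g_p'>0$ on $(x_0,\tfrac12)$. Therefore $g_p$ strictly decreases on $(0,x_0]$ and strictly increases on $[x_0,\tfrac12)$, so on $(0,\tfrac12)$ it stays strictly below $\max\{g_p(0+),g_p(\tfrac12)\}=p-2<0$. Hence $g_p(x)<0$, and so $\ell_{x,x}(p,x)=p(p-1)x^{p-3}g_p(x)<0$, which in particular is nonzero.

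I expect the only slightly delicate point to be the positivity of $(\ln h_p)'$: at first glance the terms $\frac{3-p}{1-x}$ and $-\frac1{3-p-x}$ look as though they could cancel, and one really needs the clean factorization $\frac{a}{1-x}-\frac1{a-x}=\frac{(a-1)(a+1-x)}{(1-x)(a-x)}$ rather than crude bounds; everything else is bookkeeping with endpoint values and with the sign of a derivative that vanishes once. (Alternatively, after the substitution $x^{3-p}x^{p-2}=x$ the inequality $g_p(x)<0$ is of algebraic-hyperbolic type and is in principle decidable by a \texttt{Reduce} call, but the monotonicity argument above is short enough to present directly.)
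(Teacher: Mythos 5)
Your proof is correct; every step checks out, including the factorization $g_p'(x)=h_p(x)-1$ with $h_p(x)=x^{2-p}(1-x)^{p-3}(3-p-x)$, the logarithmic-derivative identity $\frac{a}{1-x}-\frac{1}{a-x}=\frac{(a-1)(a+1-x)}{(1-x)(a-x)}$ with $a=3-p$, and the endpoint evaluations $g_p(0+)=g_p(\tfrac12)=p-2$, $h_p(0+)=0$, $h_p(\tfrac12)=5-2p>1$.

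That said, the route is genuinely different from (and noticeably longer than) the paper's. The paper substitutes $y:=\frac{1-x}{x}$, which satisfies $y>1$ precisely when $x\in(0,\tfrac12)$, and obtains
\begin{equation*}
\ell_{x,x}(p,x)\,\frac{(1-x)^{2-p}}{p(p-1)}=1-(2-p)y^{3-p}-(3-p)y^{2-p}<1-(2-p)-(3-p)=2(p-2)<0,
\end{equation*}
the inequality being immediate from $y^{3-p}>1$ and $y^{2-p}>1$. Your approach instead normalizes by $x^{p-3}$, reducing to $g_p(x)<0$, and then runs a full monotonicity analysis: compute $g_p'$, exhibit it as $h_p-1$, prove $h_p$ is strictly increasing via its logarithmic derivative, deduce that $g_p$ decreases then increases, and conclude from the equal endpoint values. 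What the paper's substitution buys is that it collapses the entire sign problem to a one-line comparison of powers of a single quantity exceeding $1$; what your argument buys is that it is self-contained calculus with no need to spot the normalizing factor $(1-x)^{2-p}$, and it additionally locates the interior minimizer $x_0$ of $g_p$, which is more structural information than the lemma actually requires. Both use the hypothesis $x<\tfrac12$ in an essential way — the paper via $y>1$, you via $g_p(\tfrac12)=p-2$ and $h_p(\tfrac12)>1$.
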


\begin{lemma}\label{lem:B>0}
For all $p\in(1,2)$, 
\begin{equation}\label{eq:B>0}
		  B(p):=4 (p-1)^{p-1}-(6-p)^{p-1}>0. 
\end{equation}
\end{lemma}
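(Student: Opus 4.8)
The plan is to show that $B(p):=4(p-1)^{p-1}-(6-p)^{p-1}>0$ for all $p\in(1,2)$ by reducing to an inequality about a single scalar function and exploiting monotonicity. First I would rewrite the inequality in logarithmic form: since $p-1>0$ and $6-p>0$ on the whole interval, $B(p)>0$ is equivalent to $4(p-1)^{p-1}>(6-p)^{p-1}$, i.e.\ to
\begin{equation*}
	\tfrac{\ln 4}{p-1} > \ln(6-p)-\ln(p-1) = \ln\tfrac{6-p}{p-1},
\end{equation*}
after dividing by $p-1$ and taking logarithms. Equivalently, setting $\phi(p):=(p-1)\ln\tfrac{6-p}{p-1}$, the claim becomes $\phi(p)<\ln 4$ for all $p\in(1,2)$.

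Next I would analyze $\phi$ on $(1,2)$. At the endpoints one has $\phi(1+)=0$ (since $(p-1)\ln\tfrac{6-p}{p-1}\to0$ as $p\downarrow1$, because $(p-1)\ln(p-1)\to0$ and $(p-1)\ln(6-p)\to0$) and $\phi(2)=\ln 4$, so the bound is exact at $p=2$, consistent with the ``exactness at the endpoints'' remarks in part~(v) of Proposition~\ref{prop:C_powers}. It therefore suffices to show $\phi$ is strictly increasing on $(1,2)$, or at least that $\phi<\ln4$ throughout. Differentiating,
\begin{equation*}
	\phi'(p)=\ln\tfrac{6-p}{p-1}+(p-1)\Big(\tfrac{-1}{6-p}-\tfrac{1}{p-1}\Big)
	=\ln\tfrac{6-p}{p-1}-\tfrac{p-1}{6-p}-1.
\end{equation*}
On $(1,2)$ the ratio $\tfrac{6-p}{p-1}$ ranges over $(4,\infty)$, so $\ln\tfrac{6-p}{p-1}>\ln 4>1.386$, while $\tfrac{p-1}{6-p}\in(0,\tfrac14)$; hence $\phi'(p)>\ln4-\tfrac14-1=\ln4-\tfrac54>0$ for all $p\in(1,2)$. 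Thus $\phi$ is strictly increasing, so $\phi(p)<\phi(2)=\ln4$ for $p\in(1,2)$, which gives $B(p)>0$ and completes the proof.

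I do not anticipate a genuine obstacle here; the only mild care needed is the endpoint limit $\phi(1+)=0$ (a standard $x\ln x\to0$ argument) and the crude numerical bound $\ln 4>5/4$, both of which are elementary. Alternatively, if one prefers to avoid even these estimates, one can invoke a \texttt{Reduce} command as is done elsewhere in the paper (e.g.\ in the proof of Proposition~\ref{prop:C_f}) to verify $B(p)>0$ on $(1,2)$ directly, since after exponentiating it is an algebraic-hyperbolic inequality in $p$; but the monotonicity argument above seems cleaner and fully by hand.
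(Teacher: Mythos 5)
Your proof is correct and is essentially the same as the paper's: you pass to logarithms and show the derivative of the log-quantity is one-signed, with $p=2$ giving the extremal value; indeed your $\phi(p) = (p-1)\ln\frac{6-p}{p-1}$ is exactly $\ln 4 - \tilde B(p)$ in the paper's notation, and your $\phi'$ is $-\tilde B'$. The only cosmetic difference is that the paper establishes the sign of $\tilde B'(p) = 1+r+\ln r$ (with $r=\frac{p-1}{6-p}$) by observing it is increasing in $p$ and negative at $p=2$, whereas you bound $\phi'(p)$ pointwise from below by $\ln 4 - \tfrac54 > 0$; both are valid elementary verifications of the same fact.
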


\begin{lemma}\label{lem:DLx}
For all $p\in(1,2)$ and $x\in(0,\frac12)$ such that $x\ge x^*_p$, one has $\ell_x(p,x)<0$. 
\end{lemma}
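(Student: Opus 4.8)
The plan is to reduce the two-variable statement to a one-variable inequality using the monotonicity of $\ell_x$ already established in Lemma~\ref{lem:lxx}, and then to treat that one-variable inequality by passing to logarithms.

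\emph{Step 1: reduction to the boundary.} By Lemma~\ref{lem:lxx}, $\ell_{x,x}(p,x)<0$ for all $p\in(1,2)$ and $x\in(0,\tfrac12)$, so $x\mapsto\ell_x(p,x)$ is strictly decreasing on $(0,\tfrac12)$. Since $x^*_p=\tfrac2{25}(2p-1)$ lies in $(\tfrac2{25},\tfrac6{25})\subset(0,\tfrac12)$ for $p\in(1,2)$, it suffices to show $\ell_x(p,x^*_p)<0$ for every $p\in(1,2)$: then $\ell_x(p,x)<\ell_x(p,x^*_p)<0$ for all $x\in(x^*_p,\tfrac12)$ as well.

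\emph{Step 2: an explicit one-variable inequality.} From \eqref{eq:ell}, $\ell_x(p,x)=p\big[(p-1)x^{p-2}-x^{p-1}-(1-x)^{p-1}\big]$. Dividing by $p(x^*_p)^{p-2}>0$ and using $1-x^*_p=\tfrac{27-4p}{25}$ together with $(p-1)-x^*_p=\tfrac{21p-23}{25}$, the inequality $\ell_x(p,x^*_p)<0$ is equivalent to
\[
21p-23<(27-4p)^{p-1}\big(2(2p-1)\big)^{2-p}.
\]
For $p\in(1,\tfrac{23}{21}]$ the left side is $\le0$ while the right side is positive, so the inequality holds trivially; hence I may assume $p\in(\tfrac{23}{21},2)$, where both sides are positive, and take logarithms.

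\emph{Step 3: a monotonicity argument in $p$.} Taking logarithms, the displayed inequality becomes $h(p)>0$, where
\[
h(p):=(2-p)\ln\frac{2(2p-1)}{27-4p}-\ln\frac{21p-23}{27-4p},
\]
and one checks directly that $h(2)=0$. I would then show that $h$ is strictly decreasing on $(\tfrac{23}{21},2)$, which gives $h(p)>h(2)=0$ and finishes the proof. Differentiating, $h'(p)=\ln\tfrac{27-4p}{2(2p-1)}+R(p)$ with $R$ a rational function of $p$, so $h'$ contains a single transcendental term. Bounding that logarithm from above by its tangent line at the argument value $\tfrac{19}{6}$ (the value attained at $p=2$, so the estimate is asymptotically exact precisely where the inequality is tight; concavity of $\ln$ makes the tangent an upper bound) turns the resulting bound on $h'$ into a purely rational inequality in $p$ on $(\tfrac{23}{21},2)$. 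Clearing the positive denominators $2p-1$, $27-4p$, $21p-23$ and invoking \texttt{Reduce} (or checking the sign of the resulting polynomial by hand, using a rational upper bound such as $\ln\tfrac{19}{6}<\tfrac{25}{19}$ for the constant term) verifies $h'<0$, hence the lemma.

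\emph{Main obstacle.} The genuine difficulty is that the inequality degenerates to an equality as $p\to2$: since $\ell(2,\cdot)\equiv1$ one has $\ell_x(2,\cdot)\equiv0$, so $\ell_x(p,x^*_p)\to0$ and the two sides of the inequality in Step~2 coincide in the limit. Consequently no lossy estimate of the logarithm is admissible near $p=2$, which is why the tangent line must be anchored at the $p=2$ value $\tfrac{19}{6}$ rather than at a generic point; moreover $h''$ changes sign on $(\tfrac{23}{21},2)$ (it is large and positive near $\tfrac{23}{21}$ and negative at $2$), so one cannot argue simply via convexity of $h$ and must instead control $h'$ itself, as above.
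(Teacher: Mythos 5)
Your proposal is correct and follows essentially the same route as the paper's proof: reduce to the boundary $x=x^*_p$ (the paper does this via a normalized quotient $(D_x\ell)(p,x)=\ell_x(p,x)/\big(p(1-x)^{p-1}\big)$ whose $x$-derivative is elementary, you do it via Lemma~\ref{lem:lxx} directly), dispose of $p\le\frac{23}{21}$ trivially, and then show the resulting one-variable function of $p$ (the paper's $H(p)=(27-4p)^{1-p}(4p-2)^{p-2}(21p-23)-1$, your $h(p)=-\ln\big(H(p)+1\big)$) vanishes at $p=2$ and is monotone on $(\frac{23}{21},2)$. The paper verifies the monotonicity by applying Mathematica's \texttt{Minimize} to a mixed rational-plus-logarithmic expression $H_1(p)$, whereas you propose a tangent-line bound on the logarithm (anchored at the endpoint value $\frac{19}{6}$, where the inequality is tight) to reduce the sign check on $h'$ to a purely rational inequality for \texttt{Reduce} --- a difference in packaging of the final computer-assisted step, not in strategy.
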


\begin{lemma}\label{lem:DLp}
For all $p\in(1,2)$ and $x\in(0,\frac12)$ such that $x<x^*_p$, one has $\ell_p(p,x)<0$. 
\end{lemma}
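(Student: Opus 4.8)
The plan is to differentiate $\ell$ with respect to $p$, isolate the single summand whose sign is not manifest, and then reduce the claim to an easy one-variable inequality.

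First I would compute, directly from \eqref{eq:ell},
\[
\ell_p(p,x)=(1-x)^p\ln(1-x)+x^{p-1}\bigl[1+(p-x)\ln x\bigr].
\]
Since $0<x<\tfrac12$, we have $(1-x)^p>0$, $\ln(1-x)<0$, and $x^{p-1}>0$, so the first summand is strictly negative, and it suffices to show that $B(x):=1+(p-x)\ln x\le0$ on the range in question. Note in passing that $x^*_p=\tfrac2{25}(2p-1)<\tfrac6{25}<\tfrac12$ for all $p\in(1,2)$, so the hypothesis $x<\tfrac12$ is redundant and the only active constraint is $0<x<x^*_p$.

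Next I would write $B(x)=1-(p-x)\,(-\ln x)$. On $(0,1)$ the two factors $p-x$ and $-\ln x$ are positive and strictly decreasing (using $p>1>x$), hence so is their product; therefore $(p-x)(-\ln x)>(p-x^*_p)(-\ln x^*_p)$ for every $x\in(0,x^*_p)$. Since $p-x^*_p=\tfrac{21p+2}{25}$ and $-\ln x^*_p=\ln\tfrac{25}{4p-2}$, this reduces the lemma to the one-variable inequality
\[
(21p+2)\,\ln\tfrac{25}{4p-2}>25\qquad(p\in[1,2]),
\]
which then gives $B(x)<1-\tfrac1{25}(21p+2)\ln\tfrac{25}{4p-2}<0$ and hence $\ell_p(p,x)<0$.

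The displayed inequality is elementary: for $p\le2$ one has $\tfrac{25}{4p-2}\ge\tfrac{25}{6}>4$, so $\ln\tfrac{25}{4p-2}>\ln 4=2\ln 2>1.38$; together with $21p+2\ge23$ for $p\ge1$ this yields $(21p+2)\ln\tfrac{25}{4p-2}>23\cdot1.38>25$. (Alternatively, this last step can be discharged by a single \texttt{Reduce} call, as in the other proofs.) There is essentially no serious obstacle here: of the four lemmas this is the most tractable, and the only point needing a little care is recognizing that the product of the decreasing positive factors $p-x$ and $-\ln x$ lets one evaluate at the endpoint $x=x^*_p$, after which everything is routine.
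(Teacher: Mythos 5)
Your proof is correct, and it is genuinely simpler and more elementary than the paper's.

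The paper's argument normalizes $\ell_p$ by the manifestly negative summand $-(1-x)^p\ln(1-x)$ to form $(D_p\ell)(p,x)$, then forms a second-order normalization $(D_pD_p\ell)$, shows $(D_pD_p\ell)$ is increasing in $p$ and uses two \texttt{Reduce} calls to establish its positivity at $p=1\vee p^*_x$ (splitting at $x=\tfrac2{25}$), concludes $(D_p\ell)$ is increasing in $p$ on $[1\vee p^*_x,2)$, and finally checks $(D_p\ell)(2,x)<0$ by a third \texttt{Reduce}. You instead simply discard the negative summand $(1-x)^p\ln(1-x)$, reduce the claim to $B(x)=1+(p-x)\ln x<0$, observe that $(p-x)(-\ln x)$ is a product of two positive strictly decreasing factors on $(0,x^*_p]$ (using $p>1>x$) and is therefore decreasing, so it suffices to evaluate at the right endpoint $x=x^*_p$, yielding the one-variable inequality $(21p+2)\ln\tfrac{25}{4p-2}>25$ on $[1,2]$, which you dispatch with the crude bounds $21p+2\ge23$ and $\ln\tfrac{25}{4p-2}\ge\ln\tfrac{25}{6}>\ln 4>1.38$, giving $23\cdot1.38>25$. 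This avoids the second-derivative apparatus and all computer algebra; the only thing the paper's route buys in this instance is uniformity of method with the neighboring lemmas, where \texttt{Reduce}-based sign analysis is harder to avoid. One can even note your argument also covers the endpoint $x=x^*_p$, slightly more than the lemma requires.
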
 

The proofs of these lemmas are deferred to the end of this subsection. 
%Turn back to the proof of part (iii) of the proposition. 
Let us now consider the four parts of Proposition~\ref{prop:C_powers}. 

%\begin{proof}[Proof of Proposition~\ref{prop:C_powers}]\ 

%%\item (ii) 
%In fact, $x_p\in(\frac{p-1}5,\frac{p-1}2)\subset(0,\frac12)$ for all $p\in(1,2)$. 

\textbf{(i,ii)}\quad Take any $p\in(1,2)$. %Part (ii) of the proposition follows by part (iv) of Proposition~\ref{prop:C_f}, since 
Observe that $\ell_x(p,\frac{p-1}2)=2^{1-p} \big((p-1)^{p-1}-\break
(3-p)^{p-1}\big) p<0$, % Untitled.20.nb 
since $p-1<3-p$. 
On the other hand, $\ell_x(p,\frac{p-1}5)=5^{1 - p} p B(p)>0$, % B_GreaterThan_0.nb
by Lemma~\ref{lem:B>0}. 
%$\ell_x(p,\frac12)=2^{2-p} (p-2) p<0$. % file xpLessThan0.5.nb
So, any value of $x_{f;s}$ as in part (iv) of Proposition~\ref{prop:C_f} (for $f=|\cdot|^p$) must be in the interval $(\frac{p-1}5,\frac{p-1}2)\subset(0,\frac12)$. By Lemma~\ref{lem:lxx} and part (iii) of Proposition~\ref{prop:C_f} (with $s_f=0$), $\ell_x(p,x)$ is strictly decreasing in $x\in(0,\frac12)$ from a positive value to a negative one. 
Now, in view of part (v) of Proposition~\ref{prop:C_f}, parts (i) and (ii) of Proposition~\ref{prop:C_powers} 
follow, taking also into account that the equation \eqref{eq:x_p eq} is equivalent to $\ell_x(p,x)=0$. 

\textbf{(iii)}\quad 
By part (i) of Proposition~\ref{prop:C_powers}, $x_p$ is the only root $x\in(0,\frac12)$ of the equation $\ell_x(p,x)=0$, for each $p\in(1,2)$. So, 
by Lemma~\ref{lem:lxx} and the implicit function theorem, $\tC_p$ is differentiable, and even real-analytic, and hence continuous in $p\in(1,2)$.  

Next, by Lemma~\ref{lem:DLx}, for any $p\in(1,2)$ and $x\in(0,\frac12)$ the equality $\ell_x(p,x)=0$ implies $x<x^*_p$, which in turn implies $\ell_p(p,x)<0$, by Lemma~\ref{lem:DLp}. 
So, for any $p\in(1,2)$ one has $\ell_p(p,x_p)<0$, whence 
$\fd{}p\tC_p=\fd{}p\ell(p,x_p)=\ell_p(p,x_p)+\ell_x(p,x_p)\pd{}p x_p=\ell_p(p,x_p)<0$, which verifies that $\tC_p$ is decreasing in $p\in(1,2)$.  

Thus, to complete the proof of part (iii) of the proposition, it remains to show that $\tC_{1+}=2$ and $\tC_{2-}=1$ (recall that $\tC_2=1$, by \eqref{eq:tC_2}). 
Here, consider first the case $p\downarrow1$. Observe that then $\ell(p-1,p)=(2-p)^p-(p-1)^p+p(p-1)^{p-1}\to2$; on the other hand, by \eqref{eq:1<C_f<2}, $\tC_p\le2$ for all $p\in(1,2]$. It indeed follows that $\tC_{1+}=2$. 
Next, for all $x\in(0,1)$ and $p\in(\frac32,2)$, one has % file tC_2-.nb 
$\ell(2,x)=1$ and 
$|x^p \ln x|<|x^{p-1} \ln x|<|x^{1/2} \ln x|<\frac2e<1$, whence 
$|\ell_p(p,x)|=|x^{p-1}+p x^{p-1} \ln x-x^p \ln x+(1-x)^p \ln (1-x)|
\le|x^{p-1}|+|p x^{p-1} \ln x|+|x^p \ln x|+|(1-x)^p \ln (1-x)|
\le1+2+1+1=5$; so, letting $p\uparrow2$, one has $\ell(p,x)=\ell(2,x)-\int_p^2\ell_p(r,x)\dd r\le1+5(2-p)\to1$, whence $\limsup_{p\uparrow2}\tC_p=\limsup_{p\uparrow2}\ell(p,x_p)\le1$. 
It remains to refer, again, to \eqref{eq:1<C_f<2}. 

\textbf{(iv)}\quad 
The proof of part (iv) of the proposition is straightforward.  

%\item (v) 
%A simple upper bound on $\tC_p$ is given by the inequality 
%$\tC_p<W_p:=2^{2-p}$ for all $p\in(1,2)$; this bound is exact at the endpoints of the interval $(1,2)$ in the sense that $\tC_{1+}=W_{1+}$ and $\tC_2=\tC_{2-}=W_{2-}=W_2$. 

%\begin{equation}\label{eq:<tC_p<}
%\tC_p^{-,1}\vee\tC_p^{-,2}<\tC_p<\tC_p^{+,1}\wedge\tC_p^{+,2}\le\tC_p^{+,2}<W_p	
%\end{equation}

\textbf{(v)}\quad The equalities $\tC_{1+}=W_{1+}$ and $\tC_2=\tC_{2-}=W_{2-}=W_2$, and the similar equalities for the upper and lower bounds $\tC_p^{-,1}$, $\tC_p^{-,2}$, $\tC_p^{+,1}$, and $\tC_p^{+,2}$ on $\tC_p$ follow immediately by part (iii) of the proposition. 
Take now any $p\in(1,2)$. 
Consider $\tl(p,z):=\ell(p,1-\sqrt z\,)$, where $z\in(0,1)$. 
By parts (i) and (ii) of Proposition~\ref{prop:C_powers}, 
\begin{equation*}
	\tC_p=\max_{z\in(0,1)}\tl(p,z)=\max_{z\in(z_1,z_2)}\tl(p,z),
\end{equation*}
where $z_1:=z_1(p):=(\frac{3-p}2)^2$ and $z_2:=z_2(p):=(\frac{6-p}5)^2$ \big(since the values $\frac{p-1}2$ and $\frac{p-1}5$ of $x$ correspond, respectively, to the values $z_1$ and $z_2$ of $z$ under the correspondence given by the formula $x=1-\sqrt z$.\big) 
Hence, $\tC_p>\tl(p,z_1)\vee\tl(p,z_2)=\tC_p^{-,1}\vee\tC_p^{-,2}$, which proves the first inequality in \eqref{eq:<tC_p<}. 
It follows from the proof of part (iv) of Proposition~\ref{prop:C_f} that $\tl(p,z)$ is concave in $z\in(0,1)$. 
Also, in the proof of parts (i) and (ii) of the proposition it was observed that 
$\ell_x(p,\frac{p-1}5)>0>\ell_x(p,\frac{p-1}2)$, which is equivalent to $\tl_z(p,z_2)<0<\tl_z(p,z_1)$, where $\tl_z:=\pd\tl z$.  
Therefore, $\tl(p,z)\le\tl(p,z_1)+\tl_z(p,z_1)(z-z_1)<\tl(p,z_1)+\tl_z(p,z_1)(z_2-z_1)=\tC_p^{+,1}$ and $\tl(p,z)\le\tl(p,z_2)+\tl_z(p,z_2)(z-z_2)<\tl(p,z_2)+\tl_z(p,z_2)(z_1-z_2)=\tC_p^{+,2}$ for all $z\in(z_1,z_2)$, which yields the second inequality in \eqref{eq:<tC_p<}. 
The third inequality in \eqref{eq:<tC_p<} is trivial.  

So, it remains to prove the last inequality in \eqref{eq:<tC_p<}. 
It is enough to show that $\rho(p)<0$, where 
\begin{align*}
	\rho(p)&:=%2\times5^p\sup_{z\in(z_1,z_2)}\big(\tl(p,z_2)+\tl_z(p,z_2)(z-z_2)-2^{2-p}\big) \\
	2\times5^p\big(\tC_p^{+,2}-2^{2-p}\big) \\
	&=A(p)+\tfrac34\,\tfrac{27-7p}{6-p}\,p(p-1)B(p), \\
	A(p)&:=10 p (p-1)^{p-1}-2 (p-1)^p-2^{3-p} 5^p+2 (6-p)^p, 
\end{align*}
and $B(p)$ is as in \eqref{eq:B>0}. % Untitled-27.nb
Observe next that $27-7p\le\frac{49}{60}(6-p)^2$. Hence and in view of Lemma~\ref{lem:B>0}, 
\begin{equation*}
4\rho(p)\le\trho(p):=4A(p)+\tfrac{49}{20}(6-p)p(p-1)B(p); 	
\end{equation*}
thus, it suffices to show that $\trho(p)<0$, which can be rewritten as $\hat\rho(r)<0$ for $r\in(0,\frac25)$, where 
\begin{equation*}
\hat\rho(r):=16(\tfrac25)^{1+\frac52\,r}\trho(1+\tfrac52\,r). % = rrh in Untitled-29.nb	
\end{equation*}
One has 
\begin{equation*}
	\rho_1(s):= %=Drrrh in Untitled-29.nb
	\hat\rho'(r) %=Drrh in Untitled-29.nb
	\frac{(1 + s)^3}{r^{5 r/2}}
	=A_1(s)+4 B_1(s) s^{\frac{5}{s+1}}, 
\end{equation*}
where 
\begin{align*}
	A_1(s) &:= 16 (-62 + 2202 s + 1160 s^2 + 121 s^3) + 
  80 (40 + 382 s + 105 s^2 + 8 s^3) \ln\tfrac2{1 + s}, \\
	B_1(s) &:= 1572 - 367 s - 795 s^2 - 
  81 s^3 + (-1310 s + 75 s^2 + 160 s^3) \ln\tfrac{2s}{1 + s}, 
\end{align*}
and $s:=\frac2r-1$, so that $r=\frac2{1+s}$, and $r\in(0,\frac25)$ iff $s>4$.  
Using a \verb9Reduce9 command, one finds that $B_1(s)$ switches in sign from $-$ to $+$ as $s$ increases from $4$ to $\infty$, and the switch occurs at a certain point $s_*=31.4\dots$.  
With 
\begin{equation*}
	\trho_1(s):=\frac{\rho_1(s)}{s^{5/(1 + s)} B_1(s)}
	=\frac{A_1(s)}{s^{5/(1 + s)} B_1(s)}+4, 
\end{equation*}
another \verb9Reduce9 command shows (in about 12 sec) that 
\begin{equation*}
	\rho_2(s):=\trho'_1(s) B_1(s)^2 s^{(6 + s)/(1 + s)} \tfrac{(1 + s)^2}{80} 
\end{equation*}
switches in sign from $+$ to $-$ to $+$ to $-$ as $s$ increases from $4$ to $\infty$, and the switches occur at certain points $s_1=5.2\dots$, $s_2=21.5\dots$, and $s_3=42.7\dots$.  
So, $\trho_1(s)$ switches from increase to decrease to increase as $s$ increases from $4$ to $s_1=5.2\dots$ to $s_2=21.5\dots$ to $s_*=31.4\dots$, and then $\trho_1(s)$ switches from increase to decrease as $s$ increases from $s_*=31.4\dots$ to $s_3=42.7\dots$ to $\infty$. 
Next, $\trho_1(s)<0$ for $s\in\{4,s_1,s_2,s_3\}$; also, $\rho_1(s_*)<0$, whence $\trho_1(s_*-)=\infty>0$ and $\trho_1(s_*+)=-\infty<0$ (on recalling the definitions of $\trho_1(s)$ and $s_*$). 
It follows that $\trho_1(s)$ switches in sign from $-$ to $+$ as $s$ increases from $4$ to $s_*$, and $\trho_1<0$ on $(s_*,\infty)$. 
Therefore, $\rho_1(s)$ switches in sign from $+$ to $-$ as $s$ increases from $4$ to $\infty$. 
Equivalently, $\hat\rho'(r)$ switches in sign from $-$ to $+$ as $r$ increases from $0$ to $\frac25$. 
This implies that $\hat\rho(r)$ switches from decrease to increase as $r$ increases from $0$ to $\frac25$. 
Equivalently, $(\tfrac25)^p\trho(p)$ switches from decrease to increase as $p$ increases from $1$ to $2$. 
Note also that $\trho(1+)=\trho(2-)=\trho(2)=0$. 
So, indeed $\trho(p)<0$, for all $p\in(1,2)$. 
This proves part (v) and thus the entire proposition, modulo Lemmas~\ref{lem:lxx}--\ref{lem:DLp}. 

%\end{proof}

\begin{proof}[Proof of Lemma~\ref{lem:lxx}]\ 
Introduce the new variable $y:=\frac{1 - x}x$, so that $y>1$ for $x\in(0,\frac12)$. Then, for any $p\in(1,2)$ and $x\in(0,\frac12)$, % file lxx.nb 
\begin{align*}
	\ell_{x,x}(p,x)\,\frac{(1 - x)^{2 - p}}{p(p-1)}&=1-(2-p) y^{3-p}-(3-p) y^{2-p} \\
	&<1-(2-p) -(3-p) =2(p-2)<0, 
\end{align*}
which proves the lemma. 
\end{proof}

\begin{proof}[Proof of Lemma~\ref{lem:B>0}]\ 
Take indeed any $p\in(1,2)$. Note that \eqref{eq:B>0} is equivalent to 
$% \begin{equation}
	\tilde B(p):=\ln \left(4 (p-1)^{p-1}\right)-\ln \left((6-p)^{p-1}\right)>0. 
$ % \end{equation}
Next, $\tilde B'(p)=1+r+\ln r$, where $r:=\frac{p-1}{6-p}$, so that $\tilde B'(p)$ is increasing in $p$, and $\tilde B'(2)<0$, which implies that $\tilde B'(p)<0$ and hence $\tilde B(p)$ is decreasing in $p$, with $\tilde B(2)=0$. Thus, indeed $\tilde B(p)>0$.  
\end{proof}

\begin{proof}[Proof of Lemma~\ref{lem:DLx}]\ 
Throughout the proof, it is assumed that indeed $p\in(1,2)$ and $x\in(0,\frac12)$. 
Let 
\begin{equation*}
	(D_x\ell)(p,x):=\frac{\ell_x(p, x)}{p (1 - x)^{p-1}}, 
\end{equation*}
so that $D_x\ell$ equals $\ell_x$ in sign. 
Then $\pd{}x (D_x\ell)(p,x)=(p-2) (p-1) (1-x)^{-p} x^{p-3}<0$, so that $(D_x\ell)(p,x)$ decreases in $x$. 
Consider now 
\begin{equation*}
	H(p):=(D_x\ell)(p,x^*_p)=(27-4 p)^{1-p} (4 p-2)^{p-2} (21 p-23)-1. 
\end{equation*}
Obviously, $H(p)<0$ for $p\le\frac{23}{21}$. Let us show that $H(p)<0$ for $p\in(\frac{23}{21},2)$ as well. 
Observe that 
\begin{multline*}
	H'(p)\frac{4 (27-4 p)^{p-1} (2 p-1)^2 (4 p-2)^{-p}}{21 p-23} \\
	=H_1(p):=\frac{25 \left(42 p^2-92 p+73\right)}{(27-4 p) (2 p-1) (21 p-23)}+\ln
   \frac{4 p-2}{27-4 p}.  
\end{multline*}
Using the Mathematica command \verb9Minimize9, one finds that $H_1(p)>0$ and hence $H'(p)>0$ for $p\in(\frac{23}{21},2]$. Since $H(2)=0$, it indeed follows that $H(p)<0$ for $p\in(\frac{23}{21},2)$ and thus for all $p\in(1,2)$. 
So, one has $(D_x\ell)(p,x^*_p)<0$. Recalling that $(D_x\ell)(p,x)$ decreases in $x$, one has $(D_x\ell)(p,x)<0$ or, equivalently, $\ell_x(p,x)<0$ --- provided that $x\ge x^*_p$. 
\end{proof}

\begin{proof}[Proof of Lemma~\ref{lem:DLp}]\ 
Throughout the proof, it is assumed that indeed $p\in(1,2)$ and $x\in(0,\frac12)$. 
Let 
\begin{align*}
	(D_p\ell)(p,x)&:=\frac{\ell_p(p, x)}{-(1 - x)^p \ln(1 - x)}
	=\frac{x^{p-1} (1+(p-x) \ln x)}{-(1-x)^p\,\ln (1-x)}-1, \\
	(D_pD_p\ell)(p,x)&:=\pd{(D_p\ell)(p,x)}p\,\frac{(1-x)^p}{x^{p-1}}\,\frac{\ln (1-x)}{\ln x}, 
\end{align*}
so that $D_p\ell$ and $D_pD_p\ell$ equal $\ell_p$ and $\pd{(D_p\ell)}p$ in sign, respectively. 
Then \break
$\pd{}p (D_pD_p\ell)(p,x)=\ln(1-x)-\ln x>0$ \big(since $x\in(0,\frac12)$\big), so that $(D_pD_p\ell)(p,x)$ increases in $p$. 
Consider now 
\begin{equation*}
	(D_pD_p\ell)(p^*_x,x)
	=\frac{[4+(21 x+2) \ln x]\ln (1-x) - [8+(21 x+2) \ln x]\ln x}{4 \ln x}. 
\end{equation*}
Observe that $1<p^*_x<2 \iff \frac2{25} < x < \frac6{25}$, and then use the Mathematica command \verb9Reduce9 to find that $(D_pD_p\ell)(p^*_x,x)>0$ provided that $\frac2{25} < x < \frac6{25}$. 
Similarly, $(D_pD_p\ell)(1,x)>0$ provided that $0<x\le\frac2{25}$. 
Thus, $(D_pD_p\ell)(1\vee p^*_x,x)>0$ for all $x\in(0,\frac6{25})$. 
Recalling that $(D_pD_p\ell)(p,x)$ increases in $p$, one has $(D_pD_p\ell)(p,x)>0$ for all $p\in[1\vee p^*_x,2)$. 
It follows that $(D_p\ell)(p,x)$ increases in $p\in[1\vee p^*_x,2)$. 
Now use \verb9Reduce9 to check that $(D_p\ell)(2,x)<0$, which yields $(D_p\ell)(p,x)<0$ or, equivalently, 
$\ell_p(p,x)<0$ for $p\in[1\vee p^*_x,2)$ or, equivalently, for $x\le x^*_p$. 
\end{proof}

\subsection{Proofs of Corollary~\ref{cor:concentr} and Propositions~\ref{lem:centring} and \ref{prop:minU}}\label{proof of {cor:concentr}} 

First in this subsection we shall prove Proposition~\ref{prop:minU}, then Proposition~\ref{lem:centring}, and finally Corollary~\ref{cor:concentr}. % (modulo Lemma~\ref{lem:centring}), and finally Lemma~\ref{lem:centring}. 

\begin{proof}[Proof of Proposition~\ref{prop:minU}]
The convexity of $U_f(c,s,a)$ in $a\in\R$ follows immediately from that of $f$. 
Since $f'$ is strictly positive and nondecreasing on $(0,\infty)$, it follows that $f(\infty-)=\infty$; similarly (or because $f$ is even), $f(-\infty+)=\infty$. So, $U_f(c,s,a)\to\infty$ as $|a|\to\infty$. 
Therefore and by continuity, there is a minimizer of $U_f(c,s,a)$ in $a\in\R$. 
Take any such minimizer, say $a_*$. 
Since $f\in\C^1(\R)$, the partial derivative of $U_f(c,s,a)$ in $a$ at $a=a_*$ is $0$; that is, $cf'(s-c+a_*)+(s-c)f'(a_*-c)=0$, which can be rewritten as 
\begin{equation}\label{eq:cf'=..}
	cf'(s-c+a_*)=(s-c)f'(c-a_*),
\end{equation}
since $f$ is even and hence $f'$ is odd. Recall also that $f'$ is strictly positive and hence nowhere zero on $(0,\infty)$. It follows that the arguments $s-c+a_*$ and $c-a_*$ of $f'$ in \eqref{eq:cf'=..} must be of the same sign; noting that the sum of these arguments is $s>0$, one concludes that they must be both positive; equivalently, $a_*\in(c-s,c)$.  
Moreover, $f'$ is positive and nondecreasing on $(0,\infty)$ and 
$0<c<s-c$, so that \eqref{eq:cf'=..} yields $f'(s-c+a_*)>f'(c-a_*)$ and hence 
\begin{equation}\label{eq:s-c+a>}
	s-c+a_*>c-a_*.
\end{equation}
%, whence  $a_*\in(c-s/2,c)$.  

If a minimizer of $U_f(c,s,a)$ in $a$ is not unique, then the first two partial derivatives of $U_f(c,s,a)$ in $a$ are identically zero for all $a$ in some nonempty open interval $(a_1,a_2)\subset(c-s,c)$. That is, 
$cf'(s-c+a)=(s-c)f'(c-a)$ and $cf''(s-c+a)+(s-c)f''(a-c)=0$ for all $a\in(a_1,a_2)$. Since $f''$ is nonnegative and even, it follows that $f''(c-a)=f''(a-c)=0$ for all $a\in(a_1,a_2)$, so that $f''=0$ on the interval $(c-a_2,c-a_1)$. Because $a_2\le c$ and $f''$ is nonnegative and nonincreasing on $(0,\infty)$, one has $f''=0$ on the interval $(c-a_2,\infty)$, so that $f'$ is constant on the same interval. 
On recalling \eqref{eq:s-c+a>}, 
%that any minimizer $a$ of $U_f(c,s,a)$ must be in the interval $(c-s/2,c)$, one sees that 
one has 
$s-c+a>c-a>c-a_2$ for any $a\in(a_1,a_2)$, which shows that $f'(s-c+a)=f'(c-a)$; however, this contradicts the previously obtained inequality $f'(s-c+a_*)>f'(c-a_*)$ for any minimizer $a_*$. 
%$cf'(s-c+a)=(s-c)f'(c-a)$, since $c\in(0,\tfrac s2)$ implies $c\ne s-c$, whereas the values $f'(s-c+a)$ and $f'(c-a)$ are both positive. 

Next, the formula \eqref{eq:a_t,c,s} for the unique minimizer of $U_{\psi_t}(c,s,a)$ in $a$ 
is easy to verify by noting that the partial derivative of $U_{\psi_t}(c,s,a)$ in $a$ at $a=\tfrac c{s-c}\,(s-c-t)_+$ is $0$. 
Moreover, for any real $c$ an $t$ such that $c>t>0$ 
%$c\in(1,\infty)$ 
one has %ka_psi.nb 
$\dfrac{U_{\psi_1}(c,s,0)}{U_{\psi_1}(c,s,a_{\psi_1;c,s})}\underset{s\to\infty}\longrightarrow2 -\frac t{2 c}$, and then $2 -\frac t{2 c}\underset{c\to\infty}\longrightarrow2$, which shows that $\ka_{\psi_t}=2$. 
%Now $\ka_{\psi_t}=2$ for all $t\in(0,\infty)$ follows by homogeneity. 

It remains to prove that the unique minimizer $a=a_{f;c,s}$ is nonnegative. Equivalently, it remains to show that the partial derivative of $U_f(c,s,a)$ in $a$ is no greater than $0$ at $a=0$, that is,  
\begin{equation}\label{eq:der at 0}
	cf'(s-c)\ge(s-c)f'(c). 
\end{equation}
By the linearity relation \eqref{eq:f'} and homogeneity, w.l.o.g.\ $f=\psi_t$ for some $t\in(0,\infty)$, in which case \eqref{eq:der at 0} is equivalent to $a_{\psi_t;c,s}\ge0$, and that is obvious from \eqref{eq:a_t,c,s}. 
%because 
%\begin{equation}\label{eq:a_t,c,s}
%	a_{\psi_t;c,s}=\tfrac c{s-c}\,(s-c-t)_+\ge0. 
%\end{equation}
\end{proof} 

\begin{proof}[Proof of Proposition~\ref{lem:centring}]
Take indeed any $f\in\F_{1,2}\setminus\{0\}$. % and $a\in\R$.   
By e.g.\ \cite[Proposition~3.18]{disintegr}, any zero-mean probability distribution on $\R\setminus\{0\}$ is a mixture of zero-mean probability distributions on 2-point sets. 
Therefore, w.l.o.g.\ the zero-mean r.v.\ $X$ takes on only two values, so that $X=X_{c,d}$, where $c$ and $d$ are positive real numbers, and $X_{c,d}$ is a r.v.\ such that $\P(X_{c,d}=-c)=\frac d{c+d}$ and $\P(X_{c,d}=d)=\frac c{c+d}$. 
Take now any $c$ and $s$ such that 
$0<c<s<\infty$, and introduce 
\begin{equation}\label{eq:R_f}
R_f(c,s,a):=\frac{U_f(c,s,0)}{U_f(c,s,a)}=\frac{\E f(X_{c,s-c})}{\E f(X_{c,s-c}+a)}.  
\end{equation}
%where $x\in\R$ and $0<c<s<\infty$, 
%the latter definition is correct, because $f>0$ on $\R\setminus\{0\}$ and hence $g_{f;c,s}(0)=\E f(X_{c,s-c})>0$. 
So, the best constant $\ka$ in \eqref{eq:centring} is given by a formula similar to \eqref{eq:ka_f}, but with the restrictions $c\in(0,s)$ and $a\in\R$ instead of $c\in(0,\tfrac s2)$ and $a\in(0,c)$. 
That $c\in(0,s)$ can be reduced to $c\in(0,\tfrac s2)$ follows by the symmetry relation $R_f(c,s,a)\equiv R_f(s-c,s,-a)$ and the continuity of $R_f(c,s,a)$ in $c$. 
Finally, the condition $a\in\R$ can be reduced to $a\in(0,c)$ by Proposition~\ref{prop:minU} and the continuity of $R_f(c,s,a)$ in $a$.  
\end{proof} 

\begin{proof}[Proof of Corollary~\ref{cor:concentr}]\

\text{(I)}\quad 
Take indeed any $f\in\F_{1,2}\setminus\{0\}$.  
Consider the martingale expansion 
$$ Y = \E Y+\xi_1+\dots+\xi_n$$
of $Y$
with the martingale-differences 
\begin{equation}\label{eq:xi_i}
\xi_i:= \E_i Y - \E_{i-1} Y
\end{equation}
for $i\in\intr1n$, 
where $\E_i$ stands for the conditional expectation given the $\si$-algebra generated by $(X_1,\dots,X_i)$, with $\E_0:=\E$.
For each $i\in\intr1n$ introduce the r.v.\ 
$\eta_i :=\E_i(Y - \tilde Y_i)$, where $\tilde Y_i := g(X_1,\dots,X_{i-1},x_i,X_{i+1},\dots,X_n)$;  
then, in view of \eqref{eq:Lip} or \eqref{eq:LipE}, $|\eta_i|\le\rho_i(X_i,x_i)$;  
%almost surely (a.s.); 
because $f(u)$ is increasing in $|u|$, it follows that $f(\eta_i)\le f\big(\rho_i(X_i,x_i)\big)$ and hence $\E f(\eta_i)\le\E f\big(\rho_i(X_i,x_i)\big)$; also, $\xi_i=\eta_i-\E_{i-1}\eta_i$, since the r.v.'s $X_1,\dots,X_n$ are independent.  
Now \eqref{eq:concentr} follows from Theorem~\ref{th:} and Proposition~\ref{lem:centring}, which latter yields $\E_{i-1}f(\xi_i)\le\ka_f\E_{i-1}f(\eta_i)$ and hence $\E f(\xi_i)\le\ka_f\E f(\eta_i)$.   

To check the inclusion $\ka_f\in[1,2]$ in \eqref{eq:ka_f}, note first that the inequality $\ka_f\ge1$ follows by the continuity of $U_f(c,s,a)$ in $a$, at $a=0$. As for the inequality $\ka_f\le2$, it can be rewritten as 
\begin{equation}\label{eq:U<2U}
U_f(c,s,0)\le2U_f(c,s,a)	
\end{equation}
for all $s\in(0,\infty)$, $c\in(0,\tfrac s2)$, and $a\in(0,c)$, where w.l.o.g.\ $f=\psi_t$ \big(for some $t\in(0,\infty)$, by \eqref{eq:U_f} and \eqref{eq:f}\big) and $s=1$ (by homogeneity). 
Take then indeed any $c\in(0,\tfrac12)$ and $a\in(0,c)$. 
By Proposition~\ref{prop:minU}, w.l.o.g.\ $a=a_{\psi_t;c,1}$. 
Using a \verb9Simplify9 Mathematica command for $U_{\psi_t}(c,1,a_{\psi_t;c,1})$ and then following with a \verb9Reduce9, % file ka_f.nb
one quickly verifies that \eqref{eq:U<2U} indeed holds for $f=\psi_t$.  
This completes the proof of part (I) of Corollary~\ref{cor:concentr}. 

\text{(II)}\quad To obtain the expression in \eqref{eq:tka_p} for $\tka_p=\ka_{|\cdot|^p}$, note first that, by homogeneity of the power function $f=|\cdot|^p$, w.l.o.g.\ $s=1$. Then
solve the equation \eqref{eq:cf'=..} to find the unique minimizer 
\begin{equation}
	a_*=\ta_{p;c}:=
	a_{|\cdot|^p;c}=c-\frac{c^{1/(p-1)}}{c^{1/(p-1)}+(1-c)^{1/(p-1)}}
\end{equation}
of $\tU_p(c,a):=U_{|\cdot|^p}(c,1,a)$ in $a$. Finally, substitute this minimizer for $a$ in  
$\tR_p(c,a):=\frac{\tU_p(c,0)}{\tU_p(c,a)}$
and simplify, to show that $\hat r_c(p):=\tR_p(c,\ta_{p;c})$ equals the expression under the $\max$ sign in \eqref{eq:tka_p}. 

The continuity of $\tka_p$ in $p$ follows because $\hat r_c(p)$ is 
%$\tU_p(c,a)=c (1-c+a)^p+(1-c)(c-a)^p$ and hence $\tR_p(c,a)$ are 
continuous in $p\in(1,2]$ uniformly in $c\in[0,\frac12]$ \big(indeed, the derivative, $\hat r'_c(p)$, of $\hat r_c(p)$ in $p$ is bounded over all $c\in[0,\frac12]$ and all $p$ in any compact subinterval of $(1,2]$\big). 
That $\tka_2=1$ is trivial. 
To check that $\tka_{1+}=2$, observe that $\tR_p(p-1,p)\to2$ as $p\downarrow1$ and recall that $\ka_f\le2$ for all $f\in\F_{1,2}\setminus\{0\}$. 
The statements that the values of $\tka_p$ are algebraic for all rational $p\in(1,2]$ and $\tka_{3/2}=\frac19\,\sqrt{51 + 21\sqrt7}=1.14\dots$, corresponding to $c=\frac16\,(3 -\sqrt{1 + 2\sqrt7})=0.081\dots$, are straightforward to check. 

It remains to prove that
$\tka_p$ strictly decreases in $p\in(1,2]$. To accomplish this, it is enough to show that $\hat r_c(p)$ does so for each $c\in(0,\frac12)$, since $\hat r_0(p)=\hat r_{1/2}(p)=1$ for all $p\in(1,2]$ and $\hat r_c(2)=1$ for all $c\in[0,\frac12]$. 
Take indeed any $p\in(1,2)$ and $c\in(0,\frac12)$ and 
observe that $(\ln \hat r_c)'(p)=r_1+r_2-\frac1{p-1}r_3$, where 
\begin{align*}
r_1&:=\frac{c^{p-1}\ln c + (1-c)^{p-1}\ln(1-c)}{c^{p-1} + (1-c)^{p-1}}, \\
r_2&:=\ln\big(c^{1/(p-1)}+(1-c)^{1/(p-1)}\big), \\
r_3&:=\frac{c^{1/(p-1)}\ln c+(1-c)^{1/(p-1)}\ln(1-c)}{c^{1/(p-1)}+(1-c)^{1/(p-1)}}. 
\end{align*}
Note that %$\frac1{p-1}>1>p-1>0$ and hence $c^{1/(p-1)}+(1-c)^{1/(p-1)<c+(1-c)=1$, whence $r_2<0$. 
$r_1+r_2-\frac1{p-1}r_3=R_1+R_2$, where $R_1:=r_1-r_3$ and $R_2:=r_2 + (1 - \frac1{p - 1}) r_3$. 
%So, it suffices to show that $R_1<0$ and $R_2<0$. 
Observe that 
\begin{equation}
	R_1=\frac{\left(\left(\frac{1-c}{c}\right)^{p-1}-\left(\frac{1-c}{c}\right)
   ^{\frac{1}{p-1}}\right) c^{p-1+\frac{1}{p-1}} \ln\frac{1-c}{c}}
   {\left(c^{\frac{1}{p-1}}+(1-c)^{\frac{1}{
   p-1}}\right) \left(c^{p-1}+(1-c)^{p-1}\right)}<0,
\end{equation}
since $\frac{1-c}{c} > 1$ and $p - 1 < 1 < \frac{1}{p-1}$. 

It remains to show that $R_2<0$. 
Consider the new variable 
$$b:=\frac{c^{1/(p-1)}}{c^{1/(p-1)}+(1-c)^{1/(p-1)}},$$ 
so that 
$b\in(0,\frac12)$ and 
%Make the substitution
$c=\frac{b^{p-1}}{b^{p-1}+(1-b)^{p-1}}$. 
Then one can check that 
\begin{equation}
	R_2=h(b):=(p-2) \big(b\ln b+(1-b)\ln(1-b)\big)-\ln\left(b^{p-1}+(1-b)^{p-1}\right) 
\end{equation}
and 
\begin{equation}
	h''(b)b^{2 - p} (1 - b)^{2 - p} (b^{p - 1} + (1 - b)^{p - 1} )^2   
=h_{21}(b)h_{22}(b), 
\end{equation}
where 
\begin{align*}
	h_{21}(b)&:=\big(\tfrac{2-p}{b}-1\big) \big(\tfrac{b}{1-b}\big)^{2-p}+1, \quad
	h_{22}(b):=\big(\tfrac{b}{1-b}\big)^{p-1} \big(\tfrac{p-1}{b}-1\big)-1,  
\end{align*}
with 
$h_{21}'(b)=(p-2) (p-1) \big(\frac{b}{1-b}\big)^{-p}\,(1-b)^{-3}<0$ and 
$h_{22}'(b)=(p-2) \break
\times(p-1) \big(\frac{b}{1-b}\big)^p\,b^{-3}<0$, 
so that both $h_{21}(b)$ and $h_{22}(b)$ are decreasing in $b$. 
Since $h_{21}(\frac12)=2(2-p)>0$, it follows that $h_{21}>0$ on $(0,\frac12)$. 
So, $h''(b)$ equals $h_{22}(b)$ in sign. 
Since $h_{22}(0+)=\infty>0$ and $h_{22}(\frac12)=2(p-2)<0$, both $h_{22}(b)$ and $h''(b)$ switch from $+$ to $-$ as $b$ increases from $0$ to $\frac12$. 
Therefore, $h(b)$ switches from convexity to concavity in $b\in(0,\frac12)$. 
At that, $h(0+)=h(\frac12)=h'(\frac12)=0$. It follows that $h<0$ and hence $R_2<0$. 
This completes the proof of part (II) and thus %-- modulo Lemma~\ref{lem:centring} -- 
that of the entire Corollary~\ref{cor:concentr}. 
\end{proof}

\subsection{Proof of Theorem~\ref{th:}}\label{proof of th} 
%This proofs proceeds in reductive steps. First, the theorem is reduced to the case $n=2$, which is mainly treated by Lemma~\ref{lem:x,Y}. In turn, Lemma~\ref{lem:x,Y} is largely reduced to the technical Lemma~\ref{lem:J<G}, which provides a few other rounds of reduction, one of which treated in Sublemma~\ref{lem:c,1-c}. We shall state these two lemmas and the sublemma just where they are needed, postponing their proofs till later in this subsection. 

%\begin{proof}[Proof of Theorem~\ref{th:}]\ 

\textbf{(I, II)}\quad By induction and conditioning, parts (I) and (II) of Theorem~\ref{th:} follow immediately from 

\begin{lemma}\label{lem:x,Y}\ Take any $f\in\F_{1,2}\setminus\{0\}$. 
\begin{enumerate}[(I)]
	\item 
For any %$f\in\F_{1,2}\setminus\{0\}$, 
$x\in\R$ and zero-mean r.v.\ $Y$ % such that $\E f(Y)<\infty$, one has 
\begin{equation*}
	\E f(x+Y)\le f(x)+C_f\E f(Y).  	
\end{equation*}
	\item
%For any $f\in\F_{1,2}\setminus\{0\}$, one has the following: 
%if 
If a constant factor $C$ is such that 
\begin{equation}\label{eq:X,Y,C}
	\E f(X+Y)\le\E f(X)+C\E f(Y)   	
\end{equation}
for all independent zero-mean r.v.'s $X$ and $Y$, then $C\ge C_f$. 		
\end{enumerate}
\end{lemma}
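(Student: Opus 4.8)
My plan is to treat the two parts separately, with Part~(II) being the source of nearly all the difficulty.

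\textbf{Part (I).} This is the upper bound. Here I would first reduce to two-point zero-mean $Y$: by the disintegration result already cited (cf.\ \cite[Proposition~3.18]{disintegr}), any zero-mean distribution on $\R\setminus\{0\}$ is a mixture of two-point zero-mean distributions, and since both sides of $\E f(x+Y)\le f(x)+C_f\,\E f(Y)$ are linear in the distribution of $Y$ (for fixed $x$), it suffices to verify the inequality when $Y=Y_{c,d}$ takes the two values $-c$ and $d$ with $\P(Y=-c)=\tfrac d{c+d}$, $\P(Y=d)=\tfrac c{c+d}$, $c,d>0$. Writing this out, the claim becomes $\tfrac d{c+d}f(x-c)+\tfrac c{c+d}f(x+d)\le f(x)+C_f\big(\tfrac d{c+d}f(c)+\tfrac c{c+d}f(d)\big)$. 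Next I would use the linearity in $f$ provided by Proposition~\ref{prop:F12} (formula \eqref{eq:f}) together with the homogeneity/rescaling built into the definitions, to reduce to the extreme functions $f=\psi_t$; for those the inequality is an explicit piecewise-polynomial statement that can be checked directly (or with a \verb9Reduce9 call as elsewhere in the paper). Alternatively, and perhaps more cleanly, I expect one can avoid case analysis: convexity of $f$ gives $f(x+d)\le f(x)+df'(x)+\big(f(x+d)-f(x)-df'(x)\big)$ trivially, so the real content is to bound the ``Taylor remainder'' using the definition \eqref{eq:L} of $L_{f;s}$. Indeed, after the substitution $s=c+d$ one recognizes that the excess of the left side over $f(x)$ is exactly a convex combination that is controlled term-by-term by $L_{f;s}(x)/f(s)\le C_f$; some care with signs of $x$ and with the region $x<0$ is needed, but evenness of $f$ handles that. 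So Part~(I) is essentially a repackaging of the definition of $C_f$ in \eqref{eq:C_f}--\eqref{eq:L} together with Proposition~\ref{prop:F12}.

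\textbf{Part (II).} This is the sharpness statement and is the main obstacle. The strategy is to exhibit, for each $C<C_f$, explicit independent zero-mean $X,Y$ violating \eqref{eq:X,Y,C}. Fix $s>s_f$ and $x_{f;s}\in(0,s)$ as in Proposition~\ref{prop:C_f}(iv), so that $L_{f;s}(x_{f;s})/f(s)$ is as close to $C_f$ as we like by choosing $s$ appropriately (Proposition~\ref{prop:C_f}(v)). The idea, as flagged in the paper's discussion of near-extremal variables, is: let $X$ be a highly skewed zero-mean variable supported essentially on $\{x_{f;s},\,-(s-x_{f;s})\cdot(\text{something large})\}$ — more precisely $X$ puts almost all its mass near $x_{f;s}$ and a tiny mass far out on the negative side so that $\E X=0$ and $\E f(X)\approx f(x_{f;s})$; and let $Y$ be a small zero-mean two-point variable of the form $Y=Y_{s,\,\text{tiny}}$, i.e.\ $Y$ takes the value $-s$ with tiny probability $\epsilon$ and a tiny positive value with probability $1-\epsilon$, so that $\E f(Y)\approx \epsilon f(s)$ while the event $\{Y=-s\}$ shifts $X$ by $-s$. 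On that rare event $f(X+Y)\approx f(x_{f;s}-s)$, and Taylor-expanding the contribution of the complementary event gives $\E f(X+Y)\approx \E f(X)+\epsilon\big(f(x_{f;s}-s)-f(x_{f;s})+s f'(x_{f;s})\big)=\E f(X)+\epsilon L_{f;s}(x_{f;s})$, up to lower-order terms. Comparing with the right side $\E f(X)+C\,\E f(Y)\approx\E f(X)+C\epsilon f(s)$ forces $C\ge L_{f;s}(x_{f;s})/f(s)$ in the limit, hence $C\ge C_f$.

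The delicate points, which I expect to be where the real work lies, are: (a) making the two-parameter limit (mass of the far tail of $X\to 0$, and $\epsilon\to0$) rigorous, controlling the error terms in the Taylor expansion of $f$ — here the concavity of $f'$ on $[0,\infty)$ from \eqref{eq:f''}, and the fact that $f''$ is nonincreasing, give the needed one-sided bounds on the remainder uniformly; (b) handling the possibility that the supremum in \eqref{eq:C_f} is attained only in a limit $s\to\infty$ (as it is for $\psi_t$, by Proposition~\ref{prop:C_psi}), which just means taking $s$ large rather than optimal and accepting $C_f-o(1)$; and (c) the technical reductions — presumably the ``technical Lemma~\ref{lem:J<G}'' and ``Sublemma~\ref{lem:c,1-c}'' referenced in the text — which reorganize the problem so that only two-point $X$ and two-point $Y$ need be considered and so that the resulting expressions become the $4\times4$ piecewise-polynomial comparisons the author describes, amenable to \verb9Reduce9. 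Since $f$ need not be twice differentiable in a strong sense and $C_f$ is only implicitly described, the bookkeeping for these reductions, rather than any single clever estimate, is the crux.
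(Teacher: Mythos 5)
Your Part~(II) matches the paper: the near-extremal configuration (a highly skewed $X$ concentrated near some $b$, with a far-out negative atom of vanishing mass, paired with a $Y$ that takes a value near $-s$ with vanishing probability $\epsilon$) is exactly what the paper uses. Concretely, with $X=X_{a,b}$ and $Y=X_{c,s-c}$, the paper writes $\dfrac{\E f(X+Y)-\E f(X)}{\E f(Y)}=\tfrac b{a+b}J_{f;c,s}(-a)+\tfrac a{a+b}J_{f;c,s}(b)$, lets $a\to\infty$ (which requires noting that $g_{f;c,s}(u)$ has a finite limit as $u\to-\infty$, from \eqref{eq:g=int}), and then lets $c\uparrow s$ so that l'Hospital turns $J_{f;c,s}(b)$ into $L_{f;s}(b)/f(s)$. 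That is precisely your Taylor-remainder heuristic made exact, and your concern (a) is handled by the dominated-convergence argument implicit in \eqref{eq:g=int}; concern (b) disappears because one merely takes a supremum.

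Your Part~(I), however, has a real gap, and your assessment of where the difficulty lies is inverted. The reduction to $f=\psi_t$ \emph{does not} follow from the linearity in formula \eqref{eq:f}, because the target inequality $\E f(x+Y)\le f(x)+C_f\,\E f(Y)$ involves the constant $C_f$, which (as Remark~\ref{rem:nonlin} stresses) is \emph{nonlinear} in $f$: proving the $\psi_t$-inequalities with $C_{\psi_t}=2$ and integrating against $\ga_f$ would only yield the crude bound with factor $2$, not $C_f$. The paper sidesteps this by replacing the target with the strictly stronger pointwise bound $J_{f;c,s}(x)\le L_{f;s}(x)/f(s)$ of Lemma~\ref{lem:J<G}; after cross-multiplication this becomes a comparison of two expressions that are \emph{bilinear} in $f$, and only then does the representation \eqref{eq:f} reduce the problem to pairs $(\psi_t,\psi_u)$ of extreme functions (the symmetrization in \eqref{eq:la,mu,nu}), culminating in the $432$-case \texttt{Reduce} computation. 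Your ``cleaner'' alternative (bounding the Taylor remainder directly by $L_{f;s}(x)/f(s)$ ``term-by-term'') is not substantiated and would face exactly the same obstruction. Finally, you attribute Lemma~\ref{lem:J<G} and Sublemma~\ref{lem:c,1-c} to Part~(II); they are actually the engine of Part~(I), which is the genuinely hard direction of this lemma.
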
 

We shall turn to the proof of this lemma in a moment, after the proof of parts (III) and (IV) of Theorem~\ref{th:} is completed. 

\textbf{(III)}\quad Take any $f\in\F_{1,2}\setminus\{0\}$. 
The inequality $C_f\ge1$ follows by \eqref{eq:C_f}, since $L_{f;s}(x)\to f(s)$ as $x\downarrow0$.  
On the other hand, in view of Proposition~\ref{prop:C_psi} and \eqref{eq:C_f}, one has $L_{\psi_t;s}(x)\le2\psi_t(s)$ for  
any $t\in(0,\infty]$ and $x,s$ such that $0<x<s<\infty$; so, \eqref{eq:f} implies $L_{f;s}(x)\le2f(s)$, whence, by \eqref{eq:C_f}, $C_f\le2$. 

\textbf{(IV)}\quad Part (IV) of Theorem~\ref{th:} follows immediately from Propositions~\ref{prop:C_psi} and \ref{prop:C_powers}. 

Thus, Theorem~\ref{th:} is proved, modulo Lemma~\ref{lem:x,Y}. 

\begin{proof}[Proof of Lemma~\ref{lem:x,Y}]\ 
The main idea of this proof is to use appropriate Taylor expansions. A similar approach was used e.g.\ in \cite{dharm-fab-jog68,chatterji,pin80,pin94,T2,pin12-2smooth}. 

\textbf{(I)}\quad 
Clearly, for all real $z$ and $y$, 
\begin{equation}\label{eq:<hat C_f}
f(z+y)\le f(z)+yf'(z)+\hat C_f f(y),  
\end{equation}
where
\begin{equation}\label{eq:hat C_f:=}
\hat C_f:=\sup_{\substack{z\in\R,\\y\in\R\setminus\{0\}}}R_f(z,y)	\quad\text{and}\quad R_f(z,y):=\frac{f(z+y)-f(z)-yf'(z)}{f(y)}.	
\end{equation}
Here one may recall that, as was noted at the end of the paragraph containing \eqref{eq:f''}, 
$f>0$ on $\R\setminus\{0\}$. Concerning the validity of \eqref{eq:<hat C_f} when $y=0$, recall that $f(0)=0$ and assume that $\hat C_f f(y)=0$ if $y=0$ and $\hat C_f=\infty$ (in fact, later it will be seen that $\hat C_f$ is always between $1$ and $2$. 

It is not hard to see that 
\begin{equation}\label{hat C_f=C_f}
	\hat C_f=C_f. 
\end{equation}
Indeed, because $f$ is an even function and hence $f'$ is an odd function, it follows that $R_f(-z,-y)=R_f(z,y)$ for any $z\in\R$ and $y\in\R\setminus\{0\}$. 
So, one may replace the condition $y\in\R\setminus\{0\}$ in \eqref{eq:hat C_f:=} by $y\in(-\infty,0)$. 
Take indeed any such $y$ and consider the Taylor expansion  
\begin{equation}\label{eq:R f =}
	R_f(z,y)f(y)=f(z+y)-f(z)-yf'(z)=y^2\int_0^1(1-t)f''(z+ty)\dd t. 
\end{equation}
By \eqref{eq:f''}, $f''$ is nondecreasing on the interval $(-\infty,0)$. 
Next, note that $z+ty<0$ whenever $z\in(-\infty,0]$, $y\in(-\infty,0)$, and $t\in(0,1)$.   
Therefore, in view of \eqref{eq:R f =} and the continuity of $f$ and $f'$, $R_f(z,y)$ is nondecreasing in $z\in(-\infty,0]$. 
Similarly, $R_f(z,y)$ is nonincreasing in $z\in[-y,\infty)$, because  $f''$ is nonincreasing on the interval $(0,\infty)$ and $z+ty>0$ whenever $z\in[-y,\infty)$, $y\in(-\infty,0)$, and $t\in(0,1)$.  
Hence, the condition $z\in\R,y\in\R\setminus\{0\}$ in \eqref{eq:hat C_f:=} can be replaced by $y\in(-\infty,0),z\in(0,-y)$. 
Thus, \eqref{hat C_f=C_f} follows by replacing $s$ and $x$ in \eqref{eq:C_f} by $-y$ and $z$, respectively. 

Now part~(I) of Lemma~\ref{lem:x,Y} follows immediately from \eqref{eq:<hat C_f} and \eqref{hat C_f=C_f}.

\textbf{(II)}\quad 
For any positive real numbers $c$ and $d$, let 
$X_{c,d}$ stand for any r.v.\ such that $\P(X_{c,d}=-c)=\frac d{c+d}$ and $\P(X_{c,d}=d)=\frac c{c+d}$. 
Take now any $c$ and $s$ such that 
$0<c<s<\infty$ %, as in \eqref{eq:C_f}, 
and introduce 
\begin{equation}\label{eq:g,J}
	g_{f;c,s}(x):=\E f(x+X_{c,s-c})-f(x)\quad\text{and}\quad 
	J_{f;c,s}(x):=\frac{g_{f;c,s}(x)}{g_{f;c,s}(0)};  
\end{equation}
%where $x\in\R$ and $0<c<s<\infty$, 
the latter definition is correct, because $f>0$ on $\R\setminus\{0\}$ and hence $g_{f;c,s}(0)=\E f(X_{c,s-c})>0$. 
%Observe also that $J_{f;c,s}(-x)=J_{f;s-c,s}(x)$. 

%Using integration by parts (or, more
%precisely, the Fubini theorem), one has the Taylor expansion 
%$f(x+k)=f(x)+kf'(x)+k^2%\break
%\int_0^1(1-z)f''(x+kz)\dd z$ for all real $x$ and $k$,   
%%$k\in\R$ such that $x+k>0$; the latter restriction is imposed because $f''$ is in general defined only on $\R\setminus\{0\}$. 
%whence 
In view of the Taylor expansion in \eqref{eq:R f =}, for any $x\in\R$ 
\begin{align}
	sg_{f;c,s}(x)&=cf(x+s-c)+(s-c)f(x-c)-sf(x) \label{eq:g=}\\
	& =(s-c)c\int_0^1(1-t)\big[(s-c)f''\big(x+(s-c)t\big)+cf''(x-ct)\big]\dd t. \label{eq:g=int}
\end{align}
%
%To prove part (II) of Lemma~\ref{lem:x,Y}, take indeed any $f\in\F_{1,2}\setminus\{0\}$. Let $c$ and $s$ be as in the proof of part (I) of Lemma~\ref{lem:x,Y}, so that $0<c<s<\infty$. 
Since $f''$ is even on $\R$ and nonnegative and nonincreasing on $(0,\infty)$, the identity \eqref{eq:g=int} implies that $g_{f;c,s}(x)$ converges to a finite limit as $x\to-\infty$, and then so does $J_{f;c,s}(x)$. 
Let now $a$ and $b$ be any positive real numbers. Then 
\begin{equation*}
	\frac{\E f(X_{a,b}+X_{c,s-c})-\E f(X_{a,b})}{\E f(X_{c,s-c})}
	=\frac b{a+b}J_{f;c,s}(-a)+\frac a{a+b}J_{f;c,s}(b)
	\underset{a\to\infty}\longrightarrow J_{f;c,s}(b), 
\end{equation*}
assuming that the r.v.'s $X_{a,b}$ and $X_{c,s-c}$ are independent. 
So, the constant $C$ in \eqref{eq:X,Y,C} cannot be less than $J_{f;c,s}(b)$, for any $c$, $s$, $b$ such that $0<c<s<\infty$ and $0<b<\infty$. 
%That is, $C$ must be no less than $\tilde C_f$, the left-hand side of \eqref{eq:reduced}. 

On the other hand, by l'Hospital's rule, for any $x\in\R$, 
\begin{equation}\label{eq:J->G} 
	J_{f;c,s}(x)\underset{c\uparrow s}\longrightarrow
	\frac{L_{f;s}(x)}{f(s)},     
\end{equation} 
with $L_{f;s}(x)$ as in \eqref{eq:L}
So, in view of 
\eqref{eq:C_f}, %$\tilde C_f\ge C_f$, and thus 
$C\ge C_f$. 
So, part (II) of Lemma~\ref{lem:x,Y} is proved as well. %, modulo Lemma~\ref{lem:J<G}. 
\end{proof} % of Lemma~\ref{lem:x,Y} ??

%\end{proof} % of Theorem

Now Theorem~\ref{th:} is completely proved.

\bibliographystyle{abbrv}
%\bibliographystyle{ims}
%\bibliography{are.citations}
%\bibliography{citat}

%\bibliography{citations}

\bibliography{C:/Users/Iosif/Dropbox/mtu/bib_files/citations12.13.12}
%\bibliography{C:/Users/Iosif/Documents/mtu_home01-30-10/bib_files/citations}
%\bibliography{C:/Users/Iosif/Documents/mtu_home12-22-08/bib_files/citations}

\end{document}